\Crefname{equation}{}{}
\newcommand{\psmod}[1]{~(\textup{\text{mod}}~{#1})}
\numberwithin{equation}{subsection}
\theoremstyle{plain}
\newtheorem{lemma}[equation]{Lemma}
\newtheorem{prop}[equation]{Proposition}
\newtheorem{corollary}[equation]{Corollary}
\newtheorem{cor}[equation]{Corollary}
\newtheorem{theorem}[equation]{Theorem}
\newtheorem{thm}[equation]{Theorem}
\theoremstyle{remark}
\newtheorem{example}[equation]{Example}
\newtheorem{remark}[equation]{Remark}
\theoremstyle{definition}
\newtheorem{definition}[equation]{Definition}
\newenvironment{enumalph}
{\begin{enumerate}}
{\end{enumerate}}
\newenvironment{enumroman}
{\begin{enumerate}}
{\end{enumerate}}
\newcommand{\C}{\mathbb C}
\newcommand{\PP}{\mathbb P}
\newcommand{\Q}{\mathbb Q}
\newcommand{\R}{\mathbb R}
\newcommand{\Z}{\mathbb Z}
\newcommand{\bbP}{\mathbb{P}}
\newcommand{\bbQ}{\mathbb{Q}}
\newcommand{\bbR}{\mathbb{R}}
\newcommand{\bbZ}{\mathbb{Z}}
\newcommand{\frakb}{\mathfrak b}
\newcommand{\frakd}{\mathfrak d}
\newcommand{\frakf}{\mathfrak f}
\newcommand{\frakg}{\mathfrak g}
\newcommand{\frakM}{\mathfrak M}
\newcommand{\frakN}{\mathfrak N}
\newcommand{\frakp}{\mathfrak p}
\newcommand{\frakQ}{\mathfrak Q}
\newcommand{\fraks}{\mathfrak s}
\newcommand{\p}{\mathfrak p}
\newcommand{\calH}{\mathcal H}
\newcommand{\calO}{\mathcal O}
\newcommand{\calP}{\mathcal P}
\newcommand{\calR}{\mathcal R}
\newcommand{\calU}{\mathcal U}
\newcommand{\calV}{\mathcal V}
\newcommand{\eps}{\varepsilon}
\newcommand{\legen}[2]{\left(\frac{#1}{#2}\right)}
\newcommand{\inv}{^{-1}}
\newcommand{\Shat}{\widehat{S}}
\newcommand{\Fhat}{\widehat{F}}
\newcommand{\calOhat}{\widehat{\mathcal O}}
\newcommand{\Zhat}{\widehat{\mathbb Z}}
\DeclareMathOperator{\Cl}{Cl}
\let\det\relax 
\DeclareMathOperator{\det}{det}
\DeclareMathOperator{\disc}{disc}
\DeclareMathOperator{\Emb}{Emb}
\DeclareMathOperator{\Frob}{Frob}
\DeclareMathOperator{\Gal}{Gal}
\DeclareMathOperator{\Gen}{Gen}
\DeclareMathOperator{\GL}{GL}
\DeclareMathOperator{\Hilb}{Hilb}
\DeclareMathOperator{\impart}{Im}
\DeclareMathOperator{\M}{M}
\DeclareMathOperator{\nrd}{nrd}
\DeclareMathOperator{\ord}{ord}
\DeclareMathOperator{\rk}{rk}
\DeclareMathOperator{\rmP}{P}
\DeclareMathOperator{\PGL}{PGL}
\DeclareMathOperator{\Pl}{Pl}
\DeclareMathOperator{\sgn}{sgn}
\DeclareMathOperator{\SL}{SL}
\DeclareMathOperator{\tr}{tr}
\DeclareMathOperator{\Typ}{Typ}
\DeclareMathOperator{\Nm}{Nm}
\DeclareMathOperator{\Pic}{Pic}
\DeclareMathOperator{\Stab}{Stab}
\DeclareMathOperator{\vol}{vol}
\newcommand{\iso}{\simeq}
\newcommand{\defi}{\textsf}
\definecolor{jdr}{RGB}{195,103,224}
\definecolor{ForestGreen}{rgb}{0.13,0.54,0.13}
\begin{document}    

\title[Hilbert modular surfaces]{A database of basic numerical invariants \\ of Hilbert modular surfaces}

\author{Eran Assaf}
\address{Department of Mathematics, Dartmouth College, 6188 Kemeny Hall, Hanover, NH 03755, USA}
\curraddr{}
\email{assaferan@gmail.com}
\urladdr{\url{http://www.math.dartmouth.edu/~eassaf/}}

\author{Angelica Babei}
\address{Department of Mathematics $\&$ Statistics, McMaster University, Hamilton Hall, 1280 Main Street West, Hamilton, ON,
L8S 4K1, Canada}
\curraddr{}
\email{babeia@mcmaster.ca}
\urladdr{\url{https://angelicababei.com}}

\author{Ben Breen}
\address{School of Mathematics and Statistics, Clemson University, Clemson, SC, 29631}
\curraddr{}
\email{benjaminkbreen@gmail.com}
\urladdr{\url{www.benbreenmath.com}}

\author{Edgar Costa}
\address{Department of Mathematics, Massachusetts Institute of Technology, Cambridge, MA 02139-4307, USA}
\curraddr{}
\email{edgarc@mit.edu}
\urladdr{\url{https://edgarcosta.org}}

\author{Juanita Duque-Rosero}
\address{Department of Mathematics, Dartmouth College, 6188 Kemeny Hall, Hanover, NH 03755, USA}
\curraddr{}
\email{juanita.gr@dartmouth.edu}
\urladdr{\url{https://math.dartmouth.edu/~jduque/}}

\author{Aleksander Horawa}
\address{Mathematical Institute, University of Oxford, Woodstock Road,
Oxford,
OX2 6GG, UK}
\curraddr{}
\email{horawa@maths.ox.ac.uk}
\urladdr{\url{https://people.maths.ox.ac.uk/horawa/}}

\author{Jean Kieffer}
\address{Department of Mathematics, Harvard University, 1 Oxford St., Cambridge, MA 02138, USA}
\curraddr{}
\email{kieffer@math.harvard.edu}
\urladdr{\url{https://scholar.harvard.edu/kieffer}}

\author{Avinash Kulkarni}
\address{Department of Mathematics, Dartmouth College, 6188 Kemeny Hall, Hanover, NH 03755, USA}
\curraddr{}
\email{avinash.a.kulkarni@dartmouth.edu}
\urladdr{\url{https://math.dartmouth.edu/~akulkarn/}}

\author{Grant Molnar}
\address{Department of Mathematics, Dartmouth College, 6188 Kemeny Hall, Hanover, NH 03755, USA}
\curraddr{}
\email{Grant.S.Molnar.GR@dartmouth.edu}
\urladdr{\url{https://www.grantmolnar.com}}

\author{Sam Schiavone}
\address{Department of Mathematics, Massachusetts Institute of Technology, Cambridge, MA 02139-4307, USA}
\curraddr{}
\email{sschiavo@mit.edu}
\urladdr{\url{https://math.mit.edu/~sschiavo/}}

\author{John Voight}
\address{Department of Mathematics, Dartmouth College, 6188 Kemeny Hall, Hanover, NH 03755, USA}
\email{jvoight@gmail.com}
\urladdr{\url{http://www.math.dartmouth.edu/~jvoight/}}

\thanks{This research was supported by Simons Collaboration Grant (550029, to Voight).  Costa and Schiavone were supported by a Simons Collaboration Grant (550033, to Poonen and Sutherland) and Kieffer (550031, to Elkies). Breen received additional support from NSF RTG Grant DMS $\#1547399$. Horawa was supported by the NSF grant DMS-2001293 and UK Research and Innovation [grant number MR/V021931/1].}

\renewcommand{\shortauthors}{Assaf, et al.}

\begin{abstract}
We describe algorithms for computing geometric invariants for Hilbert modular surfaces, and we report on their implementation.
\end{abstract}

\maketitle

\setcounter{tocdepth}{1}

\section{Introduction}

\subsection{Motivation}

Modular curves serve as essential motivation for the Langlands program and provide a continued and rich domain for mathematical study and explicit computation.  As we consider possible generalizations, we move up in dimension and encounter Hilbert modular surfaces.  A first step in surveying the fascinating landscape of Hilbert modular surfaces would be to organize and tabulate their basic arithmetic and geometric invariants in a sufficiently general manner.

In this paper, we begin to undertake this task.  We design and implement algorithms to compute invariants---including cusp and elliptic cycle data, Chern and Betti numbers, arithmetic genus, holomorphic Euler characteristic, and Kodaira type---and then compile data for a range of Hilbert modular surfaces.  Our effort builds upon foundational work of Hirzebruch \cite{Hirzebruch,Hir77}, Hirzebruch--van der Geer \cite{vdg-Hirzebruch}, and van der Geer \cite{vdG}, who systematically computed invariants up to discriminant $500$ restricted to level $1$ and the group $\SL_2$.  (There has also been substantial work in higher dimension, but continuing in level $1$ and for the group $\SL_2$: see the survey by Grundman \cite{Grundman}.)  Here, we generalize to nontrivial level and work with both $\SL_2$ and $\GL_2^+$, which requires new analysis.  

Our code is implemented in \textsf{Magma} \cite{Magma} and is available online \cite{ourcode}; the associated dataset is also available online \cite{ourdata}.  For the subset where our computations overlap with existing work above, we have verified that it matches.  We are in the process of including this data in the $L$-functions and Modular Forms Database (LMFDB) \cite{LMFDB}, to make it easy to browse and search.

\subsection{Organization}

The paper is organized as follows.  After setting notation in \cref{sec:prelim}, we begin with the enumeration of cusps and their resolution using Hirzebruch--Jung continued fractions in \cref{sec:cusps}, generalizing work of Dasgupta--Kakde \cite{DasguptaKakde}.  Next, in \cref{sec:elliptic_points} we turn to the enumeration of elliptic points and describe their rotation factors using the theory of embedding numbers and work of Prestel \cite{Prestel}.  In \cref{sec:dims}, we compute the 
generating series for the dimension of spaces of cusp forms.  With these three main ingredients in hand, in \cref{sec:geom_invs} we compute the desired numerical invariants, including Chern numbers, Betti numbers, and in some cases the Kodaira dimension.  We conclude in \cref{sec:data} with a description of the data tabulated, as well as directions for future work.

\subsection{Acknowledgements}
We thank Lassina Demb\'el\'e, Helen Grundman and the anonymous referees for helpful comments, as well as Sara Chari, Michael Musty, Shikhin Sethi, and Samuel Tripp for their contributions on related parallel work. 

\section{Preliminaries} \label{sec:prelim}

References for Hilbert modular forms include Freitag \cite{Freitag}, van der Geer \cite{vdG}, and Goren \cite{Goren}; for a computational introduction, see Demb\'el\'e--Voight \cite{DembVoight}.

\subsection{Basic notation}

Throughout, let $F$ be a totally real field of degree $n \colonequals [F:\Q]$ with discriminant $d_F$, and let $\Pl F$ be the set of places of $F$.  (We will eventually restrict $F$ to be a real quadratic field, but some of our results hold in this general case.)  Let $R \colonequals \Z_F$ be the ring of integers of $F$, and let $\Cl R$ be the class group of $R$ with $h=h(R) \colonequals \#\Cl R$ its cardinality.

For a real place $v \colon F \hookrightarrow \R$ and $a \in F$, we write $a_v \colonequals v(a) \in \R$. Similarly, for $\alpha \in \M_2(F)$ write $\alpha_v \in \M_2(\R)$ for the matrix obtained by applying $v$ to $a$ entrywise.  In our algorithms, we fix an ordering of real places by the roots of a \texttt{polredabs} defining polynomial for $F$. Let~$\sgn: F^\times\to \{\pm1\}^n$ be the sign map. We say $a \in F^\times$ is \defi{totally positive} if $a \in \ker \sgn$ (i.e., $a_v>0$ for all $v$), and write $F^\times_{>0}$ (resp.~$R_{>0}^\times$) for the group of totally positive elements of $F$ (resp.~totally positive units of~$R$).  Let $\Cl^+ R$ be the narrow class group of $R$ with cardinality $h^+ =h^+(R)\colonequals \#\Cl^+ R$.  There is an exact sequence
\begin{equation} \label{eqn:pm1rtimes}
1 \to \{\pm 1\}^n/\sgn(R^\times) \to \Cl^+ R \to \Cl R \to 1 
\end{equation}
so the natural surjection $\Cl^+ R \to \Cl R$ is an isomorphism if and only if there are units of $R$ of all possible signs.  

Let $\mathbf{H} \colonequals \{z \in \C : \impart z>0\}$ be the upper half-plane equipped with its hyperbolic metric and let $\calH \colonequals \mathbf{H}^n$, with the product indexed by the real places $v$.  The  group 
\begin{equation} 
\GL_2^+(F)\colonequals\{\gamma \in \GL_2(F) : \det \gamma \in F^\times_{>0}\}
\end{equation}
acts naturally by orientation-preserving isometries on $\calH$ via coordinatewise linear fractional transformations
\begin{equation} \label{eqn:zgam}
z \colonequals (z_v)_v \mapsto \gamma z \colonequals (\gamma_v z_v)_v \colonequals \left(\frac{a_v z_v+b_v}{c_v z_v + d_v}\right)_{v}. 
\end{equation}

For a nonzero ideal $\frakN \subseteq R$ and a (nonzero) fractional $R$-ideal $\frakb \subset F$, we define the \defi{standard congruence subgroups} of level $\frakN$ by
\begin{equation}  \label{eqn:gammagroups}
\begin{aligned}
\Gamma_{0}(\frakN)_\frakb &\colonequals \begin{pmatrix} R & \frakb^{-1} \\ \frakN\frakb & R \end{pmatrix} \cap \det^{-1}(R_{>0}^{\times})\\
&\ = \left\{ \begin{pmatrix} a & b \\ c & d \end{pmatrix} \in \GL_2^+(F) : 
a,d \in R,\ b \in \frakb^{-1},\ c \in \frakN\frakb,\ ad-bc \in R_{>0}^{\times} \right\}, \\
\Gamma_1(\frakN)_\frakb &\colonequals \begin{pmatrix} 1+\frakN & \frakb^{-1} \\ \frakN\frakb & R \end{pmatrix} \cap \det^{-1}(R_{>0}^{\times})
\end{aligned}
\end{equation}
so that $\Gamma_1(\frakN)_\frakb \leq \Gamma_0(\frakN)_\frakb \leq \GL_2^+(F)$.  These subgroups arise naturally, since
\begin{equation}  
\Gamma_0(1)_\frakb = \GL^+(R \oplus \frakb) 
\end{equation}
is the group of oriented $R$-module automorphisms of $R \oplus \frakb$ and $\Gamma_0(\frakN)_\frakb$  is the subgroup that stabilizes the first factor modulo $\frakN$. (Every projective $R$-module of rank $2$ is isomorphic to~$R\oplus\frakb$ for some~$\frakb$.)

For $\alpha \in F_{>0}^\times$, conjugation by $(\begin{smallmatrix} \alpha & 0 \\ 0 & 1 \end{smallmatrix})$ defines an isomorphism
\begin{equation} \label{eqn:g01bconj}
\Gamma_i(\frakN)_\frakb \iso \Gamma_i(\frakN)_{\alpha\frakb}
\end{equation}
for $i=0,1$; so up to isomorphism we may take the fractional ideals $\frakb$ among a choice of representatives of $\Cl^+ R$. (This corresponds to an oriented isomorphism $R\oplus\frakb \iso R\oplus \alpha\frakb$.) It is sometimes convenient to choose the indexing ideal $\frakb$ to be integral and a multiple of the different of $F$.

We may also take the kernel of the determinant map on these groups, giving
\begin{equation}
\Gamma_i^1(\frakN)_\frakb \colonequals \ker(\det|_{\Gamma_i(\frakN)_\frakb}) = \Gamma_i(\frakN)_\frakb \cap \SL_2(F)
\end{equation}
for $i=0,1$ and an exact sequence of groups
\begin{equation}
1 \to \Gamma_i^1(\frakN)_\frakb \to \Gamma_i(\frakN)_\frakb \to R_{>0}^\times \to 1.
\end{equation}
In view of \cref{eqn:pm1rtimes}, modulo scalars these groups agree when $\Cl^+ R = \Cl R$. 

For any subgroup $\Gamma \leq \GL_2^+(F)$, we denote by $\rmP\!\Gamma$ its image under the projection map $\GL_2^+(F) \to \PGL_2^+(F)$.  Then $\rmP\!\Gamma_i^1(\frakN)_{\frakb} \leq \rmP\!\Gamma_i(\frakN)_{\frakb}$ is a subgroup of index dividing $2^{n-1}$ (for $i=0,1$).  

\begin{remark}
We might also consider other subgroups refining the determinant, for example, matrices whose determinant is $1$ modulo $\frakN$.  These alternate families are organized by characters; we hope to pursue this in future work.  
\end{remark}

A more general definition of congruence subgroups is the following. We define the \defi{full level} subgroup (or \defi{principal congruence} subgroup) of level $\frakN$ by
\begin{equation}
\Gamma(\frakN)_\frakb \colonequals \begin{pmatrix} 1+\frakN & \frakN\frakb^{-1} \\ \frakN\frakb & 1+\frakN \end{pmatrix} \cap \det^{-1}(R^{\times}_{>0})
\end{equation}
and say $\Gamma \leq \GL_2^+(F)$ is a \defi{congruence subgroup} if~$\rmP\!\Gamma$ is conjugate to a group that contains~$\rmP\!\Gamma(\frakN)_\frakb$ for some $\frakN,\frakb$. For the most part, we restrict attention to standard congruence subgroups in this article.

\subsection{Hilbert modular varieties}
\label{sec:hilb_varieties}

Let $\Gamma < \GL_2^+(F)$ be a congruence subgroup.  Via the action \eqref{eqn:zgam}, the group $\Gamma$ is a discrete group acting properly on $\calH$.
The group $\rmP\!\Gamma$ acts faithfully on $\calH$.
The quotient
\begin{equation}
Y(\Gamma)(\C) \colonequals \Gamma \backslash \calH
\end{equation} 
is a complex orbifold of dimension $n$.  The complex analytic space $Y(\Gamma)(\C)$ is the set of complex points of a quasi-projective variety $Y(\Gamma)$, called a \defi{Hilbert modular variety}; this variety has a canonical model, defined over its reflex field (a number field).  Hilbert modular varieties admit an interpretation as a moduli space for polarized abelian surfaces with real multiplication and level structure.

The Baily--Borel compactification of $Y(\Gamma)$ is obtained by adding finitely many points as follows.
Let $\calH^* \colonequals \calH \cup \PP^1(F)$ be the completed product of upper half-planes. The action~\eqref{eqn:zgam} of $\Gamma$ on $\calH$ extends to an action on $\calH^*$.  
The \defi{cusps} of $\Gamma$ are the orbits of $\PP^1(F)$ under~$\Gamma$.  Then the analytic space
\begin{equation}
    \overline{Y}(\Gamma)(\C) \colonequals \Gamma \backslash \calH^* 
\end{equation}
obtained from adding cusps is compact, and $\overline{Y}(\Gamma)$ is a proper variety.  However, $\overline{Y}(\Gamma)$ is singular at the cusps (and any elliptic points) when $n>1$.  For $n=2$, there exists a minimal desingularization 
    \begin{equation} 
        \pi \colon X(\Gamma) \to \overline{Y}(\Gamma) 
    \end{equation}
which can be understood explicitly: algorithms for enumerating and resolving cusps and quotient singularities are investigated in \cref{sec:cusps,sec:elliptic_points}.  However, this only gives a \emph{minimal resolution} for each singular point. There may still be curves to blow down to get the minimal model $X(\Gamma)$: see \cref{sec:Kodairatype}.

For these varieties we will also use the notation $X_0(\frakN)_\frakb=X(\Gamma_0(\frakN)_\frakb)$, and
\begin{equation}
    X_0(\frakN) \colonequals \bigsqcup_{[\frakb] \in \Cl^+ R} X_0(\frakN)_{\frakb}, \quad
    X_1(\frakN) \colonequals \bigsqcup_{[\frakb] \in \Cl^+ R} X_1(\frakN)_{\frakb},
\end{equation}
gathering components.  

\subsection{Hilbert modular forms}
\label{sub:hmf-def}

Hilbert modular varieties can be understood explicitly by studying their modular forms, defined as follows.  Let $\Gamma < \GL_2^+(F)$ be a congruence subgroup, and let $k=(k_i)_i \in 2\Z_{\geq 0}^n$. 
A \defi{Hilbert modular form} of (even) weight~$k$ for $\Gamma$ is a holomorphic function $f \colon \calH \to \C$ such that
\begin{equation} \label{eqn:fundfgammazprop}
f(\gamma z) = \left(\prod_v \frac{(c_v z_v + d_v)^{k_v}}{\det(\gamma_v)^{k_v/2}}\right) f(z)
\end{equation}
for all $\gamma \in \Gamma$ and $z\in \calH$, with the additional condition that $f$ remains bounded in vertical strips in the case $F=\Q$.

The $\C$-vector space of Hilbert modular forms for $\Gamma$ of weight $k$ is finite-dimen\-sional and denoted $M_k(\Gamma)$. Restricting to parallel weights (i.e., all~$k_i$ are equal to some~$k\in\Z)$, multiplication gives 
\begin{equation} M(\Gamma) \colonequals \bigoplus_{k \in 2\Z_{\geq 0}} M_k(\Gamma) \end{equation}
the structure of a graded ring.

A \defi{cusp form} is a Hilbert modular form $f$ such that $f(z) \to 0$ as $z \to c$ for all cusps $c$ of~$\Gamma$.  Let $S_{k}(\Gamma) \subseteq M_k(\Gamma)$ denote the subspace of cusp forms. 
There is a natural integration pairing, called the \defi{Petersson inner product}, between $M_k(\Gamma)$ and $S_k(\Gamma)$ which provides an orthogonal decomposition
\begin{equation}
M_k(\Gamma) = E_k(\Gamma) \oplus S_k(\Gamma);
\end{equation}
we call $E_k(\Gamma)$ the \defi{Eisenstein subspace}. 

For standard congruence subgroups, one can define related spaces of modular forms by gathering components: we write
\begin{equation}
M_k(\Gamma_0(\frakN)) \colonequals \bigoplus_{[\frakb] \in \Cl^+ R} M_k(\Gamma_0(\frakN)_\frakb),\\
\end{equation}
and similarly define $S_k(\Gamma_0(\frakN))$,  $M_k(\Gamma_0^1(\frakN))$, $M(\Gamma_0(\frakN))$, etc.

\begin{remark}
More expansively, we may also be interested in subspaces of lifts, forms of non-parallel (and even non-paritious) weight, the decomposition into eigenforms (with attached $L$-functions, Galois representations, and motives), etc.  Here, we restrict our scope to those aspects that are relevant for basic surface invariants.
\end{remark}

\section{Cusps} \label{sec:cusps}

A fundamental invariant of a Hilbert modular variety is the number of its cusps, the orbits of $\PP^1(F)$ under the attached congruence subgroup $\Gamma$, arising in its compactification.
We wish to enumerate these cusps explicitly. These cusps are singular points (except for modular curves); for certain Hilbert modular surfaces, the number of curves in the minimal resolution, as well as their intersection numbers, are pleasantly determined by Hirzebruch--Jung continued fractions.

\subsection{Cusp enumeration} \label{sec:cuspenum}

For the groups~$\Gamma_1(\frakN)_\frakb$, Dasgupta--Kakde~\cite{DasguptaKakde} give an explicit enumeration of cusps.  We now recall their notation and method \cite[Section 3]{DasguptaKakde}, and we generalize it to the cases of~$\Gamma_0(\frakN)_\frakb, \Gamma_0^{1}(\frakN)_\frakb$, and $\Gamma_1^{1}(\frakN)_\frakb$. In this section,~$F$ is any totally real field.

For $i=0$ or $1$, a cusp of $\Gamma_i({\frakN})_{\frakb}$ or $\Gamma_i^{1}({\frakN})_{\frakb}$ is represented by $(a:c) \in \PP^1(F)$.
 We define
 \begin{equation} \label{eqn:cusp_ideals}
 \begin{aligned}
 \fraks &= \fraks(a,c) \colonequals aR + c \frakb^{-1} \\
 \frakM &= \frakM(a,c) \colonequals \frakN + c(\frakb \fraks)^{-1}.
 \end{aligned}
 \end{equation}

\begin{lemma}
The ideal $\frakM$ is well-defined (independent of the chosen representative of the cusp) and integral.
\end{lemma}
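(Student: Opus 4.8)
The plan is to separate the two assertions. Since the defining formula for $\frakM$ refers only to $\frakN$, $\frakb$, and $(a,c)$---not to the group---and since $\Gamma_1^1(\frakN)_\frakb$, $\Gamma_0^1(\frakN)_\frakb$, and $\Gamma_1(\frakN)_\frakb$ are all subgroups of $\Gamma_0(\frakN)_\frakb$, it suffices for well-definedness to show that $\frakM(a,c)$ depends only on the $\Gamma_0(\frakN)_\frakb$-orbit of $(a:c)\in\PP^1(F)$. This in turn splits into two independent checks: invariance under rescaling a lift of the projective point, $(a,c)\mapsto(\lambda a,\lambda c)$ with $\lambda\in F^\times$, and invariance under the matrix action of $\Gamma_0(\frakN)_\frakb$.

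Rescaling is immediate: it multiplies $\fraks=aR+c\frakb^{-1}$ by $\lambda$, hence multiplies $\frakb\fraks$ by $\lambda$, so it leaves $c(\frakb\fraks)^{-1}$---and therefore $\frakM$---unchanged. For the matrix action, I would write $\gamma=\left(\begin{smallmatrix}p&q\\r&t\end{smallmatrix}\right)\in\Gamma_0(\frakN)_\frakb$ (so $p,t\in R$, $q\in\frakb^{-1}$, $r\in\frakN\frakb$, and $\det\gamma\in R_{>0}^\times$), with $\gamma$ sending $(a:c)$ to $(a':c')$ where $a'=pa+qc$ and $c'=ra+tc$. The first step is to check $\fraks(a',c')=\fraks(a,c)$: the inclusion ``$\subseteq$'' follows term by term from $p,t\in R$, $q\in\frakb^{-1}$, and $r\in\frakN\frakb\subseteq\frakb$, and the reverse inclusion follows by applying the same argument to $\gamma^{-1}$, which again lies in $\Gamma_0(\frakN)_\frakb$ since that group is closed under inverses. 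Granting this, $\frakb\fraks$ is unchanged, and it remains to compare $\frakN+c'(\frakb\fraks)^{-1}$ with $\frakN+c(\frakb\fraks)^{-1}$. Here I would use $a\in aR\subseteq\fraks$ together with $r\in\frakN\frakb$ to get $ra(\frakb\fraks)^{-1}\subseteq\frakN\frakb\fraks\cdot(\frakb\fraks)^{-1}=\frakN$, and $t\in R$ to get $tc(\frakb\fraks)^{-1}\subseteq c(\frakb\fraks)^{-1}$; this yields $\frakM(a',c')\subseteq\frakM(a,c)$, and equality follows once more by running the argument with $\gamma^{-1}$.

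For integrality I would expand $\frakb\fraks=a\frakb+cR$, which shows $c\in cR\subseteq\frakb\fraks$, hence $c(\frakb\fraks)^{-1}\subseteq R$; since also $\frakN\subseteq R$, we obtain $\frakM\subseteq R$, while $\frakM\supseteq\frakN\neq 0$, so $\frakM$ is a nonzero integral ideal.

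Every step is elementary ideal bookkeeping, so I do not expect a genuine obstacle; the two points most worth stating carefully are the reduction of all four group families to the single group $\Gamma_0(\frakN)_\frakb$ at the outset, and the repeated use of closure under inverses to upgrade the one-sided containments to equalities.
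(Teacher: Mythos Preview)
Your proof is correct, but you prove strictly more than the paper does at this point. The paper interprets ``representative of the cusp'' to mean a lift $(a,c)\in F^2\setminus\{0\}$ of the projective point $(a:c)\in\PP^1(F)$, so its proof consists of a single line: rescaling $(a,c)$ by $\lambda\in F^\times$ replaces $\fraks$ by $\lambda\fraks$, and this cancels in $c(\frakb\fraks)^{-1}$; integrality follows from $c\in\frakb\fraks$. The paper defers the check of $\Gamma_0(\frakN)_\frakb$-invariance of $\fraks$ and $\frakM$ to the proof of the next lemma (the bijection $\varphi_0$), where the computation you wrote out---term-by-term containment plus applying $\gamma^{-1}$ for the reverse inclusion---appears essentially verbatim. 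So your argument is correct and your organization is arguably more natural given the phrasing of the statement; you have simply folded into this lemma a verification the paper postpones.
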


\begin{proof}
Scaling $a,c$ by an element of $F^\times$ changes $\fraks$ but this cancels out in $\frakM$. Since $c \in \frakb \fraks$, the ideal $\frakM$ is integral.
\end{proof}

For a fractional ideal $\fraks$ and an integral ideal $\frakM$ of $R$, let $(\fraks/ \fraks \frakM)^{\times}$ be the set of generators of $\fraks/\fraks\frakM$ as an $(R / \frakM)$-module, and let
\begin{equation} \label{eqn:RsM}
    \calR_{\frakM}^{\fraks} \colonequals (\fraks / \fraks \frakM)^{\times} / R_{>0}^{\times}, \quad
     \calR_{\frakM}^{\fraks,1} \colonequals (\fraks / \fraks \frakM)^{\times} / R^{\times 2} 
\end{equation}
be the quotients of this set under multiplication by totally positive units and under multiplication by squares of units, respectively. Let
\begin{equation} \label{eqn:P1Nb}
    \calP_{1}(\frakN)_{\frakb} \colonequals \left \{  
    (\fraks, \frakM, (a, c)) : \frakN \subseteq \frakM \subseteq R, (a,c) \in \calR_{\frakM}^{\fraks} \times \calR_{\frakN/\frakM}^{\fraks \frakb \frakM}
    \right \}.
\end{equation}
where $\fraks$ is a fractional ideal of $F$ and $\frakM$ is an integral ideal.

To deal with level $\Gamma_0(\frakN)_{\frakb}$, we further denote 
\begin{equation}
    \calR_{\frakM, \fraks, \frakb, \frakN} \colonequals 
    \left( \calR_{\frakM}^{\fraks} \times 
    \calR_{\frakN / \frakM}^{\fraks \frakb \frakM} \right) / (R/\frakN)^{\times},
\end{equation}
where the action of $(R/\frakN)^{\times}$ on 
$\calR_{\frakM}^{\fraks} \times \calR_{\frakN/\frakM}^{\fraks \frakb \frakM}$ is given by
\begin{equation} \label{eqn:x_x_inverse}
    x \cdot (a,c) = (xa, x^{-1}c),
\end{equation}
and similarly define
\begin{equation} \label{eqn:P0Nb}
    \calP_{0}(\frakN)_{\frakb} \colonequals \left \{  
    (\fraks, \frakM, (a, c)) : 
    \frakN \subseteq \frakM \subseteq R, (a,c) \in \calR_{\frakM, \fraks, \frakb, \frakN}
    \right \}.
\end{equation}
We then have the following.

\begin{lemma} \label{lem:cusp_bijection}
    There are natural bijections 
    \begin{equation}
    \begin{aligned}
        \varphi_1 : \Gamma_1(\frakN)_{\frakb} \backslash (F^2 \setminus \{0\})
        \rightarrow \calP_{1}(\frakN)_{\frakb} \\
        \varphi_0 : \Gamma_0(\frakN)_{\frakb} \backslash (F^2 \setminus \{0\})
        \rightarrow \calP_{0}(\frakN)_{\frakb}
    \end{aligned}
    \end{equation}
    given by $(a,c) \mapsto (\fraks, \frakM, (\bar{a}, \bar{c}))$, where $\fraks = \fraks(a,c)$ and $\frakM = \frakM(a,c)$ are as in \eqref{eqn:cusp_ideals}.
\end{lemma}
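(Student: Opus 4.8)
The plan is to prove the statement for $\varphi_1$ first --- essentially reprising the cusp enumeration of Dasgupta--Kakde \cite[Section 3]{DasguptaKakde} while keeping track of a vector in $F^2 \setminus \{0\}$ rather than a point of $\PP^1(F)$ --- and then to deduce the statement for $\varphi_0$ by quotienting by the residual action of $\Gamma_0(\frakN)_\frakb / \Gamma_1(\frakN)_\frakb$. First I would check that $\varphi_1$ is well-defined. For $(a,c) \in F^2 \setminus \{0\}$ and $\gamma \in \Gamma_0(\frakN)_\frakb$ (hence for $\gamma \in \Gamma_1(\frakN)_\frakb$), a direct manipulation of ideal products using the entry conditions $a,d \in R$, $b \in \frakb^{-1}$, $c \in \frakN\frakb$ gives $\fraks(\gamma(a,c)) = \fraks(a,c)$ and $\frakM(\gamma(a,c)) = \frakM(a,c)$, the reverse inclusions coming from the fact that $\gamma^{-1}$ satisfies the same conditions (using $\det\gamma \in R_{>0}^\times$). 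Next I would check that the residues land where claimed: $a$ generates $\fraks/\fraks\frakM$ over $R/\frakM$ since $aR + \fraks\frakM = \fraks$ (expand $\fraks\frakM = \fraks\frakN + c\frakb^{-1}$ and use \eqref{eqn:cusp_ideals}), and $c$ generates $\fraks\frakb\frakM/\fraks\frakb\frakN$ over $R/(\frakN\frakM^{-1})$ since $cR + \fraks\frakb\frakN = \fraks\frakb\frakM$, which is \eqref{eqn:cusp_ideals} multiplied through by $\fraks\frakb$. Finally, acting by $\gamma \in \Gamma_1(\frakN)_\frakb$ changes $(\bar a, \bar c)$ only within its class in $\calR_\frakM^\fraks \times \calR_{\frakN/\frakM}^{\fraks\frakb\frakM}$: modulo $\frakN$ the relevant components are unchanged, and the leftover ambiguity is multiplication by totally positive units coming from $\det\gamma$.

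For surjectivity of $\varphi_1$, given $(\fraks, \frakM, (\bar a, \bar c)) \in \calP_1(\frakN)_\frakb$, I would lift $\bar a$ to a generator $a$ of $\fraks/\fraks\frakM$ and $\bar c$ to a generator $c$ of $\fraks\frakb\frakM/\fraks\frakb\frakN$; a weak-approximation (Chinese Remainder Theorem) argument then lets me adjust $a$ modulo $\fraks\frakM$ and $c$ modulo $\fraks\frakb\frakN$ so that the identities $aR + c\frakb^{-1} = \fraks$ and $\frakN + c(\fraks\frakb)^{-1} = \frakM$ of \eqref{eqn:cusp_ideals} hold on the nose, producing a vector $(a,c)$ with $\varphi_1([(a,c)])$ equal to the given triple. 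For injectivity, if $(a,c)$ and $(a',c')$ have the same image then $\fraks$ and $\frakM$ coincide and the residues agree up to totally positive units; acting on $(a,c)$ by suitable elements of $\Gamma_1(\frakN)_\frakb$ (for instance, diagonal matrices $\mathrm{diag}(1,u)$ with $u \in R_{>0}^\times$) I may arrange $a \equiv a' \pmod{\fraks\frakM}$ and $c \equiv c' \pmod{\fraks\frakb\frakN}$, and then the description $\Gamma_0(1)_\frakb = \GL^+(R \oplus \frakb)$ of oriented module automorphisms produces one carrying $(a,c)$ to $(a',c')$; it lies in $\Gamma_1(\frakN)_\frakb$ because the two vectors are congruent modulo $\frakN$.

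To pass from $\varphi_1$ to $\varphi_0$, I would use the identification $\Gamma_0(\frakN)_\frakb \backslash (F^2 \setminus \{0\}) = (R/\frakN)^\times \backslash \bigl( \Gamma_1(\frakN)_\frakb \backslash (F^2 \setminus \{0\}) \bigr)$, the outer quotient being by $\Gamma_0(\frakN)_\frakb / \Gamma_1(\frakN)_\frakb \cong (R/\frakN)^\times$ under the top-left entry modulo $\frakN$; surjectivity of this map is seen by completing a lift of $x$ coprime to $\frakN$ to a matrix of determinant $1$. Since $\fraks$ and $\frakM$ are already $\Gamma_0(\frakN)_\frakb$-invariant, this residual action fixes them and only rescales the residues; one checks it acts as $x \cdot (\bar a, \bar c) = (x\bar a, x^{-1}\bar c)$, because an element $\gamma \in \Gamma_0(\frakN)_\frakb$ with top-left entry $\equiv x$ has bottom-right entry $\equiv x^{-1}\det\gamma$ modulo $\frakN$, with $\det\gamma \in R_{>0}^\times$ --- this is exactly \eqref{eqn:x_x_inverse}. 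Hence the quotient of $\calP_1(\frakN)_\frakb$ by $(R/\frakN)^\times$ is $\calP_0(\frakN)_\frakb$, and $\varphi_1$ descends to the asserted bijection $\varphi_0$.

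I expect the main obstacle to be the bookkeeping forced by working in $\GL_2^+(F)$ rather than $\SL_2$. There the determinant is not $1$ but ranges over all of $R_{>0}^\times$, which is precisely why each residue set $\calR_\frakM^\fraks$ is a quotient by totally positive units (rather than by $R^{\times 2}$, as in the $\SL_2$-type sets $\calR_\frakM^{\fraks,1}$) and why the $(R/\frakN)^\times$-action in \eqref{eqn:x_x_inverse} is the twisted one $x \cdot (a,c) = (xa, x^{-1}c)$; getting these normalizations to line up exactly, both in well-definedness and in the injectivity step, is the part that goes beyond the $\Gamma_1$ case treated in \cite{DasguptaKakde}.
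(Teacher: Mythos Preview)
Your reduction of $\varphi_0$ to $\varphi_1$ via the quotient $\Gamma_0(\frakN)_\frakb/\Gamma_1(\frakN)_\frakb \cong (R/\frakN)^\times$ is essentially the paper's argument, just packaged differently. The paper verifies well-definedness of $\varphi_0$ by a direct computation of the effect of $\gamma = \left(\begin{smallmatrix} p & q \\ r & t \end{smallmatrix}\right) \in \Gamma_0(\frakN)_\frakb$ on $(\fraks,\frakM,(\bar a,\bar c))$, gets surjectivity from that of $\varphi_1$, and for injectivity picks a $\delta \in \Gamma_0(\frakN)_\frakb$ with prescribed diagonal modulo $\frakN$ to absorb the $(R/\frakN)^\times$-ambiguity and then invokes injectivity of $\varphi_1$. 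Your quotient formulation does all three at once; it is the same content.

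There is, however, a genuine gap in your injectivity sketch for $\varphi_1$. The matrices $\mathrm{diag}(1,u)$ with $u \in R_{>0}^\times$ adjust only $\bar c$, not $\bar a$: the top-left entry of any $\gamma \in \Gamma_1(\frakN)_\frakb$ lies in $1+\frakN \subseteq 1+\frakM$, so acting by $\gamma$ leaves the class of $a$ in $\fraks/\fraks\frakM$ literally unchanged (one computes $\overline{pa+qc} = \overline{pa} = \overline{a}$ since $qc \in \fraks\frakM$ and $p \equiv 1 \bmod \frakM$). Thus no element of $\Gamma_1(\frakN)_\frakb$ multiplies $\bar a$ by a general $\eps_a \in R_{>0}^\times$, and your ``arrange $a \equiv a' \pmod{\fraks\frakM}$'' step does not go through as written. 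The paper does not attempt to reprove this part: it simply cites Dasgupta--Kakde \cite[Lemma~3.6]{DasguptaKakde} for the bijectivity of $\varphi_1$ and builds $\varphi_0$ on top of that. If you want to spell out $\varphi_1$ rather than cite it, you will need to follow their argument more closely at this point.
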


\begin{proof}
    The map $\varphi_1$ is well-defined and bijective by Dasgupta--Kakde \cite[Lemma 3.6]{DasguptaKakde}. To see that $\varphi_0$ is well-defined, 
    let $\gamma = (\begin{smallmatrix} p & q \\ r & t \end{smallmatrix}) \in \Gamma_0(\frakN)_{\frakb}$, so that $p,t \in R,\ q \in \frakb^{-1},$ and $ r \in \frakb \frakN$.
    Then 
    \begin{equation}
    \begin{aligned}
        \fraks(pa+qc, ra+tc) &= (pa+qc)R + (ra+tc)\frakb^{-1} \\
        &= a(pR + r\frakb^{-1}) + c(qR+t\frakb^{-1}) 
        \subseteq aR + c\frakb^{-1} = \fraks(a,c).
    \end{aligned} 
    \end{equation}
    Using $\gamma^{-1}$, we obtain the reverse inclusion, so $\fraks(pa+qc, ra+tc) = \fraks(a,c)$. 
    Similarly,
    \begin{equation}
    \begin{aligned}
        \frakM(pa+qc, ra+tc) &= \frakN + (ra+tc)(\frakb \fraks )^{-1} \\
        &= \frakN + ra (a \frakb + cR )^{-1} + tc (\frakb \fraks)^{-1}\\
        &\subseteq \frakN + c(\frakb \fraks)^{-1} = \frakM(a,c),
    \end{aligned}
    \end{equation}
    and using $\gamma^{-1}$, we have $\frakM(pa+qc, ra+tc) = \frakM(a,c)$.
    
    Since $q \in \frakb^{-1}$ and $r \in \frakb \frakN$, we see that 
    $qc \in c \frakb^{-1} \subseteq \fraks \frakM$ and 
    $ra \in a \frakb \frakN \subseteq \fraks \frakb \frakN$, hence 
    $(\overline{pa+qc}, \overline{ra+tc}) = (\overline{pa}, \overline{tc})$, where bars indicate classes in~$\calR_{\frakM, \fraks, \frakb, \frakN}$.
    We also know that $pt-qr = \det(\gamma) \in R^{\times}_{>0}$. In particular, as $aqr \in \fraks \frakN \subseteq \fraks \frakM$, we see that 
    \begin{equation}
        \overline{a} = \overline{(pt-qr)a} = \overline{pta-qra}
        = \overline{pta}.
    \end{equation}
    Since $t$ maps to $(R/\frakN)^{\times}$, its action yields
    $t\cdot (\overline{pa}, \overline{tc}) = (\overline{pta}, \overline{c}) = (\overline{a}, \overline{c})$, showing that the pairs are equivalent under the action, and establishing that $\varphi_0$ is well-defined.

    We now show that $\varphi_0$ is bijective.    Surjectivity follows from the surjectivity of~$\varphi_1$.     For injectivity, suppose that 
    \begin{equation} \label{eqn:cusps_phi_0}
    \varphi_0(a,c) = \varphi_0(a',c') = (\fraks, \frakM, (\bar{a}, \bar{c})).
    \end{equation}
    From the third component of \eqref{eqn:cusps_phi_0}, there exist an element $d \in (R/\frakN)^{\times}$ and totally positive units $\eps_a, \eps_c \in R^{\times}_{>0}$ such that 
    \begin{equation}
    \begin{aligned}
        a' &\equiv d^{-1} \eps_a a \pmod{\fraks \frakM} \\        
        c' &\equiv d \eps_c c \quad\,\pmod{\fraks \frakb \frakN}.
    \end{aligned}
    \end{equation}
    Choose $\delta = (\begin{smallmatrix} p & q \\ r & t \end{smallmatrix}) \in \Gamma_0(\frakN)_{\frakb}$ such that $p \equiv d^{-1} \bmod \frakN$ and $t \equiv d \bmod \frakN$. 
    Acting on $(a',c')$ by $\delta$, we may suppose that $a' \equiv \eps_a a \psmod{\fraks \frakM}$ and $c' \equiv \eps_c c \psmod{\fraks \frakb \frakN}$.
    The result now follows from the injectivity of $\varphi_1$.    
\end{proof}

Analogously, we define $\calR^{1}_{\frakM, \fraks, \frakb, \frakN}$ and $\calP_{i}^{1}(\frakN)_{\frakb}$ for $i = 0,1$ to deal with levels $\Gamma_i^{1} (\frakN)_{\frakb}$, and obtain analogous bijections
\begin{equation}
\begin{aligned}
    \varphi^1_1 : \Gamma_1^1(\frakN)_{\frakb} \backslash (F^2 \setminus \{0\})
    \rightarrow \calP^1_{1}(\frakN)_{\frakb} \\
    \varphi^1_0 : \Gamma_0^1(\frakN)_{\frakb} \backslash (F^2 \setminus \{0\})
    \rightarrow \calP^1_{0}(\frakN)_{\frakb}
\end{aligned}
\end{equation}
due to the exact sequence $0 \to P\Gamma_i^{1}(\frakN)_{\frakb} \to P\Gamma_i(\frakN)_{\frakb} \to R^{\times}_{>0} / R^{\times 2} \to 0 $.
    
For each $\frakM \mid \frakN$, we denote by $\frakQ_i(\frakM, \frakN)$ the set of cusps $(a:c)$ of $X_i(\frakN)$ such that $\frakM(a,c) = \frakM$, and by $\calP_{i}(\frakM, \frakN)_{\frakb} \subseteq \calP_{i}(\frakN)_{\frakb}$ the set of tuples whose second coordinate is $\frakM$, for $i \in \{0,1\}$.
We denote by $Q_0(\frakM, \frakN)$ the quotient of the product of narrow ray class groups
$\Cl^+(\frakM) \times \Cl^+(\frakN / \frakM)$ by the subgroup generated by
\begin{equation}
    \left \{ \left([xR], [x^{-1}R] \right) : x \in R, \ xR + \frakN = R \right \}.
\end{equation}
Similarly, we denote by $Q_1(\frakM, \frakN)$ the quotient of $\Cl^+(\frakM) \times \Cl^+(\frakN / \frakM)$ by the subgroup generated by 
\begin{equation}
    \left \{ \left([xR], [x^{-1}R] \right) : x \in R, \ x \equiv 1 \psmod{\frakN} \right \}.
\end{equation}
We then have the following corollary.

\begin{corollary}
    For all $\frakM \mid \frakN$ and $i=0,1$ we have 
    $\#\frakQ_i(\frakM, \frakN) = \#Q_i(\frakM,\frakN)$. Therefore, the number of cusps of $X_i(\frakN)$ is $\sum_{\frakM \mid \frakN} \#Q_i(\frakM, \frakN)$. 
\end{corollary}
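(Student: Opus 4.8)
The plan is to combine the parametrization $\varphi_i$ of \cref{lem:cusp_bijection} with the passage from $F^2\setminus\{0\}$ to $\PP^1(F)$ and the gathering over the components of $X_i(\frakN)$. First note that, by \cref{eqn:cusp_ideals} and the following lemma, the rule $(a:c)\mapsto\frakM(a,c)$ assigns to each cusp a well-defined divisor of $\frakN$ (the ideal $\frakM(a,c)$ is integral and contains $\frakN$); hence the cusps of $X_i(\frakN)$ are partitioned according to their $\frakM$-invariant, so the second assertion follows from the first. For the first, observe that $\PP^1(F)=(F^2\setminus\{0\})/F^\times$, that the scalar action of $F^\times$ commutes with the left action of $\Gamma_i(\frakN)_\frakb$, and that under $\varphi_i$ this scalar action corresponds to the action on $\calP_i(\frakN)_\frakb$ sending $(\fraks,\frakM,(a,c))$ to $(\lambda\fraks,\frakM,(\lambda a,\lambda c))$, which fixes the middle coordinate. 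Consequently the cusps of $X_i(\frakN)_\frakb$ with invariant $\frakM$ are in bijection with $\calP_i(\frakM,\frakN)_\frakb/F^\times$, and summing over the $h^+$ components,
\[
  \#\frakQ_i(\frakM,\frakN)=\sum_{[\frakb]\in\Cl^+R}\#\bigl(\calP_i(\frakM,\frakN)_\frakb/F^\times\bigr).
\]

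The heart of the proof is to identify the right-hand side with $\#Q_i(\frakM,\frakN)$ via an explicit map. Given $(\fraks,\frakM,(a,c))\in\calP_i(\frakM,\frakN)_\frakb$, recall that $a$ is the class of a generator of the free rank-one $R/\frakM$-module $\fraks/\fraks\frakM$ and $c$ of $\fraks\frakb\frakM/\fraks\frakb\frakN$; choosing lifts $\alpha\in\fraks$ and $\gamma\in\fraks\frakb\frakM$, the ideals $\fraka\colonequals\alpha\fraks^{-1}$ and $\frakc\colonequals\gamma(\fraks\frakb\frakM)^{-1}$ are integral and coprime to $\frakM$ and to $\frakN/\frakM$ respectively, and one sends the tuple to the image of $([\fraka],[\frakc])$ in $Q_i(\frakM,\frakN)$. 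Showing this is well defined requires tracking how each identification built into the source descends to the defining relations of $Q_i$: (i) the $F^\times$-scaling leaves $\fraka$ and $\frakc$ literally unchanged; (ii) replacing $\alpha$ (resp.\ $\gamma$) by another lift multiplies $\fraka$ (resp.\ $\frakc$) by a principal ideal with generator $\equiv 1\pmod{\frakM}$ (resp.\ $\pmod{\frakN/\frakM}$), and one checks that, together with the quotients by totally positive units in \cref{eqn:RsM}, these pass to the subgroup defining $Q_i$; (iii) for $i=0$ the quotient by $(R/\frakN)^\times$ through the action \cref{eqn:x_x_inverse} becomes exactly the quotient by $\langle([xR],[x^{-1}R]):x\in R,\ xR+\frakN=R\rangle$, whereas for $i=1$ only the identifications coming from units $\equiv 1\pmod{\frakN}$ survive, giving $\langle([xR],[x^{-1}R]):x\equiv 1\pmod{\frakN}\rangle$. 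Injectivity then follows from the injectivity part of \cref{lem:cusp_bijection} (for $\varphi_i$), and surjectivity by exhibiting, for an arbitrary pair of ideals coprime to $\frakM$ and to $\frakN/\frakM$, a cusp mapping to it.

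The subtle point, which I expect to be the main obstacle, is to see that one lands in the \emph{narrow} ray class groups $\Cl^+(\frakM)$ and $\Cl^+(\frakN/\frakM)$ rather than the ordinary ones. Componentwise (for a fixed $\frakb$) the quotient by totally positive units in \cref{eqn:RsM} records sign data only to the extent of $\sgn(R^\times)$, so $\calP_i(\frakM,\frakN)_\frakb/F^\times$ naturally produces classes enriched by $\sgn(R^\times)$, not by all of $\{\pm1\}^n$. The missing enrichment is supplied exactly by summing over the $h^+$ components $[\frakb]\in\Cl^+R$ rather than over $\Cl R$: by \cref{eqn:pm1rtimes} the fibers of $\Cl^+R\to\Cl R$ are torsors under $\{\pm1\}^n/\sgn(R^\times)$, and keeping track of how this interacts with the lift ambiguities — together with a density argument showing every sign vector is realized by an element $x\equiv 1\pmod{\frakN}$, so that the subgroups defining $Q_i$ contain the full diagonal of sign classes — is what makes the identification with $Q_i(\frakM,\frakN)$ exact. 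Having established $\#\frakQ_i(\frakM,\frakN)=\#Q_i(\frakM,\frakN)$ for each $\frakM\mid\frakN$, summing over $\frakM\mid\frakN$ gives the stated formula for the number of cusps.
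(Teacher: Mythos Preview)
Your strategy—building an explicit map from $\bigsqcup_\frakb \calP_i(\frakM,\frakN)_\frakb/F^\times$ to $Q_i(\frakM,\frakN)$ and verifying it is a bijection—is different from the paper's. For $i=1$ the paper simply cites Dasgupta--Kakde; for $i=0$ it argues by \emph{counting}: it fibers $\calP_0(\frakM,\frakN)_\frakb/F^\times$ over $\Cl R$ via $(\fraks,(a,c))\mapsto[\fraks]$, observes that each fiber is a torsor for the group $\calV(\frakM,\frakN)/\calU$ independently of $\frakb$ (where $\calV(\frakM,\frakN) = (\calR_\frakM^R \times \calR_{\frakN/\frakM}^R)/(R/\frakN)^\times$ and $\calU$ is the diagonal image of $R^\times$), obtains $\#\frakQ_0(\frakM,\frakN) = h^+ h\cdot\#(\calV/\calU)$, and concludes from the exact sequence $1\to\calV/\calU\to Q_0(\frakM,\frakN)\to\Cl^+ R\times\Cl R\to1$. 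No explicit bijection is ever written down.

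Your approach has a genuine gap at precisely the point you flag as subtle. The lift $\alpha$ of $a$ is determined only modulo $\fraks\frakM$ (and totally positive units); replacing $\alpha$ by $\alpha+\mu$ with $\mu\in\fraks\frakM$ multiplies $\fraka=\alpha\fraks^{-1}$ by $x=1+\mu/\alpha$, which indeed satisfies $x\equiv 1\pmod\frakM$ but has no reason to be totally positive. The same holds, \emph{independently}, for the lift $\gamma$ of $c$. Hence the pair $([\fraka],[\frakc])\in\Cl^+(\frakM)\times\Cl^+(\frakN/\frakM)$ is well-defined only up to an arbitrary pair of sign classes. But the subgroup defining $Q_i$ consists of pairs $([yR],[y^{-1}R])$ and therefore contains only \emph{diagonal} sign pairs, since $\sgn(y^{-1})=\sgn(y)$. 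So your map to $Q_i$ is not well-defined. Summing over $[\frakb]\in\Cl^+R$ does not rescue this: in your construction $\frakb$ enters only through the module $\fraks\frakb\frakM$ carrying $c$, not through any rigidification of the signs of the chosen lifts $\alpha,\gamma$. The paper's counting argument sidesteps the whole issue by never attempting to attach a narrow-ray-class element to an individual cusp.
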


\begin{proof}
    For $i = 1$, this statement is proven by Dasgupta--Kakde \cite[Corollary 3.12]{DasguptaKakde}. We will prove it for $i = 0$.
    By \Cref{lem:cusp_bijection}, taking a quotient by the natural action of $F^{\times}$ on both sides, we obtain a natural bijection
    \begin{equation} \label{eqn:disjoint_union}
        \frakQ_0(\frakM, \frakN) \rightarrow 
        \bigsqcup_{\frakb \in \Cl^+ R} \calP_0(\frakM, \frakN)_{\frakb} / F^{\times}.
    \end{equation}
    There is a surjective map
    \begin{equation}
    \begin{aligned}
        \calP_{0}(\frakM, \frakN)_{\frakb} / F^{\times} &\to \Cl R \\
        (\fraks, (a,c)) &\mapsto [\fraks].
        \end{aligned}
    \end{equation}
    Let $\calU$ be the image of $R^{\times}$ mapped diagonally to 
    \[ \calV(\frakM,\frakN) \colonequals \bigl( \calR_{\frakM}^{R} \times \calR_{\frakN/\frakM}^{R} \bigr) / (R / \frakN)^{\times}, \] where the quotient is by the action $x \mapsto (x,x^{-1})$. Then the fiber over a point in the above map is a principal homogeneous space for $\calV(\frakM, \frakN) / \calU$, which is independent of $\frakb$. Hence
    \begin{equation}    \label{eqn:cusp_count_M}
        \#\frakQ_0(\frakM,\frakN) = (h^+ h) \cdot \# (\calV(\frakM, \frakN) / \calU).
    \end{equation}
    We can now conclude using the exact sequence
    \begin{equation}
        1 \rightarrow \calV(\frakM, \frakN) / \calU \rightarrow Q_0(\frakM, \frakN)
        \rightarrow \Cl^+ R \times \Cl R \rightarrow 1.
        \qedhere
    \end{equation}
\end{proof}

The above proof also implies the following corollary.

\begin{corollary}
    The number of cusps of $X_0(\frakN)$ is equal to
    \begin{equation}
        h^+ h \smash{\sum_{\frakM \mid \frakN}}
        \varphi_{>0} (\frakM + \frakN/\frakM),
    \end{equation}
    where
    \begin{equation}
        \varphi_{>0}(\frakM) = \# \bigl( (R/\frakM)^{\times} 
        / R_{>0}^{\times} \bigr).
    \end{equation}
\end{corollary}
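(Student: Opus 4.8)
The plan is to reduce everything to the preceding corollary. Its proof shows that, for each divisor $\frakM\mid\frakN$, the set $\frakQ_0(\frakM,\frakN)$ of cusps of $X_0(\frakN)$ with invariant $\frakM$ satisfies $\#\frakQ_0(\frakM,\frakN)=h^+h\cdot\#\bigl(\calV(\frakM,\frakN)/\calU\bigr)$; since every cusp of $X_0(\frakN)$ lies in exactly one $\frakQ_0(\frakM,\frakN)$ with $\frakM\mid\frakN$, the total number of cusps is $h^+h\sum_{\frakM\mid\frakN}\#\bigl(\calV(\frakM,\frakN)/\calU\bigr)$. Thus it suffices to prove
\[
\#\bigl(\calV(\frakM,\frakN)/\calU\bigr)=\varphi_{>0}(\frakM+\frakN/\frakM)\qquad\text{for every }\frakM\mid\frakN.
\]

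Fix $\frakM\mid\frakN$ and abbreviate $A\colonequals(R/\frakM)^\times$, $B\colonequals(R/(\frakN/\frakM))^\times$, and $\frakd\colonequals\frakM+\frakN/\frakM=\gcd(\frakM,\frakN/\frakM)$. Since $\calR_\frakM^R=A/R^\times_{>0}$ and $\calR_{\frakN/\frakM}^R=B/R^\times_{>0}$, unwinding the definitions of $\calV(\frakM,\frakN)$ and $\calU$ shows that $\calV(\frakM,\frakN)/\calU$ is the quotient of the finite abelian group $A\times B$ by the subgroup $H$ generated by the images of (i) $(R/\frakN)^\times\to A\times B$, $x\mapsto(\bar x,\bar x^{-1})$; (ii) the diagonal $R^\times\to A\times B$, $u\mapsto(\bar u,\bar u)$; and (iii) $R^\times_{>0}\times R^\times_{>0}\to A\times B$ componentwise. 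Equivalently, $H$ is the image of the homomorphism
\[
\Phi\colon (R/\frakN)^\times\times R^\times\times R^\times_{>0}\times R^\times_{>0}\longrightarrow A\times B,\qquad
(x,u,\eps_1,\eps_2)\longmapsto(\bar x\bar u\bar\eps_1,\ \bar x^{-1}\bar u\bar\eps_2),
\]
where bars denote reduction into $A$ or $B$ as appropriate, so that $\#\bigl(\calV(\frakM,\frakN)/\calU\bigr)=\#(A\times B)/\#H$.

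Now consider the multiplication map $m\colon A\times B\to (R/\frakd)^\times$, $(a,b)\mapsto\bar a\bar b$, well-defined since $\frakd$ divides both $\frakM$ and $\frakN/\frakM$; it is surjective, with kernel $K$ of order $\#A\cdot\#B/\#(R/\frakd)^\times$. A direct computation gives $m\bigl(\Phi(x,u,\eps_1,\eps_2)\bigr)=\bar u^2\bar\eps_1\bar\eps_2$, so $m(H)$ is the subgroup $\langle\bar u^2:u\in R^\times\rangle\cdot\overline{R^\times_{>0}}$ of $(R/\frakd)^\times$, which equals $\overline{R^\times_{>0}}$ because the square of any unit is totally positive. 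The crux is the inclusion $K\subseteq H$: given $(a,b)\in A\times B$ with $\bar a\bar b=1$ in $(R/\frakd)^\times$, I would solve the simultaneous congruences $x\equiv a\psmod{\frakM}$ and $x\equiv b^{-1}\psmod{\frakN/\frakM}$, which is possible precisely because $a\equiv b^{-1}\psmod{\frakM+\frakN/\frakM}$ (Chinese Remainder Theorem for the possibly non-coprime ideals $\frakM$ and $\frakN/\frakM$); any solution $x$ is coprime to $\frakM$ and to $\frakN/\frakM$, hence to $\lcm(\frakM,\frakN/\frakM)$, which has the same prime divisors as $\frakN=\frakM\cdot(\frakN/\frakM)$, so $x\in(R/\frakN)^\times$ and $\Phi(x,1,1,1)=(a,b)$.

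Granting $K\subseteq H$, the argument closes with an order count: $\ker(m|_H)=H\cap K=K$ and $m(H)=\overline{R^\times_{>0}}$ give $\#H=\#K\cdot\#\overline{R^\times_{>0}}=\#A\cdot\#B\cdot\#\overline{R^\times_{>0}}/\#(R/\frakd)^\times$, whence
\[
\#\bigl(\calV(\frakM,\frakN)/\calU\bigr)=\frac{\#(A\times B)}{\#H}=\frac{\#(R/\frakd)^\times}{\#\overline{R^\times_{>0}}}=\#\bigl((R/\frakd)^\times/R^\times_{>0}\bigr)=\varphi_{>0}(\frakM+\frakN/\frakM),
\]
and summing over $\frakM\mid\frakN$ yields the stated formula. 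I expect the main obstacle to be the inclusion $K\subseteq H$---specifically, checking that the Chinese-Remainder lift can be chosen to be a genuine unit modulo $\frakN$---together with the routine but fiddly bookkeeping identifying $\calV(\frakM,\frakN)/\calU$ with $(A\times B)/H$, where the twisted $(R/\frakN)^\times$-action and the diagonal $R^\times$ both come into play.
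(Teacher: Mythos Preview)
Your argument is correct and is essentially the paper's proof, just repackaged. The paper writes down the exact sequence
\[
(R/\frakN)^\times \xrightarrow{\,r\mapsto(r,r^{-1})\,} (R/\frakM)^\times \times (R/(\frakN/\frakM))^\times \xrightarrow{\,(r,s)\mapsto rs\,} (R/(\frakM+\frakN/\frakM))^\times \to 1,
\]
reads off $\calV(\frakM,\frakN)\simeq (R/\frakd)^\times/R_{>0}^\times$, and then observes that $\calU$ acts trivially because the diagonal $u$ maps to $u^2\in R_{>0}^\times$; your multiplication map $m$, your inclusion $K\subseteq H$ (which is exactly exactness in the middle, and your CRT verification that the lift is a unit mod $\frakN$ is fine since the primes of $\frakN$ are those of $\frakM$ and $\frakN/\frakM$), and your computation $m(H)=\overline{R_{>0}^\times}$ are precisely these same three ingredients, combined into a single subgroup-counting step rather than done in sequence.
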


\begin{proof}
    Consider the exact sequence
    \begin{equation}
        \left( R/\frakN \right)^{\times} \rightarrow
        \left( R/\frakM \right)^{\times} \times 
        \left( R/(\frakN/\frakM) \right)^{\times} \rightarrow
        \left( R/(\frakM + \frakN/\frakM) \right)^{\times}
        \rightarrow 1,
    \end{equation}
    with the maps $r \mapsto (r,r^{-1})$ and $(r,s) \mapsto rs$. 
    Recalling \eqref{eqn:cusp_count_M}, we see that 
    \begin{equation}
    \calV(\frakM, \frakN) \simeq  \left( R/(\frakM + \frakN/\frakM) \right)^{\times} / R_{>0}^{\times}, 
    \end{equation}and this isomorphism identifies $\calU$ with squares of units. Since these are already totally positive, we deduce that the action of $\calU$ is trivial. (This could also be observed directly using weak approximation.)  The result follows.
\end{proof}

We define analogously $\calP_i^{1}(\frakM,\frakN)$, $Q_i^{1}(\frakM,\frakN)$ and $\varphi^{1}(\frakM) = \# \left( (R/\frakM)^{\times} / R^{\times 2} \right)$. It follows that the number of cusps of $X_i^1(\frakN)$ is $\sum_{\frakM \mid \frakN} \# Q_i^{1}(\frakM, \frakN)$. In particular, the number of cusps of $X_0^{1}(\frakN)$ is given by $h^+ h \sum_{\frakM \mid \frakN} \varphi^{1} \left(\frakM + \frakN/\frakM \right)$.

\subsection{Explicit computation of cusps}

In our implementation, we make two modifications to the enumeration method of \cref{sec:cuspenum} for computational convenience.
First, given $\fraks$ and~$\frakM$, we choose a generator $g$ for $\fraks/\fraks\frakM$ as an ($R/\frakM$)-module and work via the isomorphism 
\begin{equation} \label{eqn:RmodMiso}
    \begin{aligned}
        R/\frakM &\overset{\sim}{\to} \fraks/\fraks \frakM \\
        r &\mapsto g r.
    \end{aligned}
\end{equation}
We compute such a generator $g$ by ensuring that $g \not\in \fraks\frakp$ for all prime ideals $\frakp \mid \frakM$, which is possible by weak approximation. 
 Second, instead of forming the sets $\calR_{\frakM}^{\fraks}$ and $\calR_{\frakM}^{\fraks,1}$ (involving the quotient by the action of~$R_{>0}^{\times}$ or~$R^{\times2}$) and then further taking the quotient by $F^\times$ as in \cref{eqn:disjoint_union}, we consider only representatives of $\Cl R$ as ideals~$\fraks$ (thus accounting for the action of all units in~$F^\times$ except for those in~$R^\times$), and compute the remaining quotients all at once.
 
 We are thus led to the following algorithm for computing $\calP_{i}(\frakM, \frakN)_{\frakb}/F^\times$, where $i\in\{0,1\}$. We use the following notation:~$\eps$ is a fundamental unit of~$R$, and~$\eps^+$ is a fundamental totally positive unit.
 
\begin{enumerate}[1.]
    \item
        Form the direct product $D = (R/\frakM) \times (R/(\frakN/\frakM))$.
    \item Split according to~$i$.
                If~$i=1$, quotient $D^\times$ by the diagonal action of~$R^\times_{> 0}$ in each coordinate, as well as the diagonal action of $R^\times$. (Quotienting by~$R^\times$ partially deals with the action of $F^\times$ mentioned above.) Explicitly, let
                $$
                H \colonequals \langle (\eps, \eps), (-1, -1), (\eps^+, 1), (1, \eps^+) \rangle \subseteq D^\times,
                $$
                and compute a transversal (i.e., a complete set of coset representatives) $T$ for $H$ in $D^\times$. 
                If~$i=0$, quotient $D^\times$ by same actions as above, as well as the action by $(R/\frakN)^\times$ given in \cref{eqn:x_x_inverse}. Explicitly, let $H$ be the subgroup of~$D^\times$ generated by
                $$
                \qquad\qquad\{(\eps, \eps), (-1, -1), (\eps^+, 1), (1, \eps^+)\}
                \cup \{(r, r^{-1}) : \text{$r$ is a generator of $(R/\frakN)^\times$} \}
                $$
                and compute a transversal $T$ for $H$ in $D^\times$.
    \item
        For each $[\fraks] \in \Cl R$, compute generators $g_1,g_2$ for $\fraks/\fraks \frakM$ and $\fraks \frakb \frakM/\fraks \frakb \frakN$ as described above. Let
                $$
                    Q_{\fraks, \frakM} \colonequals \{(\fraks, \frakM, (g_1 t_1, g_2t_2)) : (t_1, t_2) \in T\} \, .
                $$
        \item
        Return $\calP_{i}(\frakN)_{\frakb}/F^\times$ as
        $\displaystyle
             \bigcup_{[\fraks] \in \Cl R} \,\bigcup_{\frakM \mid \frakN} 
             Q_{\fraks, \frakM} \, .
        $
\end{enumerate}

We find the cusp corresponding to a tuple $(\fraks, \frakM, (\overline{a}, \overline{c})) \in \calP_{i}(\frakN)_{\frakb}/F^\times$ by inverting the bijections of \Cref{lem:cusp_bijection}. To do so, we seek $a,c \in R$ that are congruent to~$\overline{a}, \overline{c}$ modulo $\fraks \frakM$ and $\fraks \frakb \frakN$, respectively, such that $\gcd(c, \fraks \frakb \frakN) = \fraks \frakb \frakM$ and $\gcd(a, c \frakb^{-1}) = \fraks$. We first arbitrarily lift $\overline{a}$
and $\overline{c}$
to $a_0 \in \fraks$ and $c_0 \in \fraks \frakb \frakM$. Adding an element of~$\fraks \frakb \frakN$ to $c_0$ will not change $\gcd(c_0, \fraks \frakb \frakN)$, so we may take $c = c_0$. Let
\begin{equation}
B \colonequals \{\frakp \text{ prime ideal of }R : \frakp \mid c \frakb^{-1} \text{ and } \frakp \nmid \fraks \frakM\}.
\end{equation}
To find $a$, we construct an element $x \in R$ such that
\begin{itemize}
    \item $x \in \fraks \frakM$,
    \item if $\frakp \in B$ and $\frakp \mid a_0$, then $v_\frakp(x) = v_\frakp(\fraks)$, and
    \item if $\frakp \in B$ and $\frakp \nmid a_0$, then $v_\frakp(x) > v_\frakp(\fraks)$,
\end{itemize}
and then set $a = a_0 + x$. (This again amounts to specifying valuations of~$x$ at finitely many primes.) The first condition ensures that $a$ has the same reduction modulo $\fraks \frakM$ as $a_0$, and the latter two conditions guarantee that $\gcd(a, c \frakb^{-1}) = \fraks$.

We compute $\calP_{i}^{1}(\frakM, \frakN)_{\frakb} / F^{\times}$ using the above algorithm, replacing $\eps^+$ by $\eps^2$, and find the cusps corresponding to tuples in $\calP_{i}^1(\frakN)_{\frakb} / F^{\times}$ in the same manner.

\subsection{Resolving cusps} \label{sec:hjcont}

Cusps usually are very singular points of the Baily--Borel compactification $\overline{Y}(\Gamma)$.
In this section, we explicitly resolve cusps following van der Geer \cite[chapter II]{vdG} and van der Geer--Hirzebruch \cite[chapter II.1]{vdg-Hirzebruch}) assuming that $F$ is a real quadratic field. Hirzebruch's method readily applies to full level (principal congruence) subgroups~$\Gamma(\frakN)_{\frakb}$, and we explain how to generalize it to other natural congruence subgroups.
In the end, we obtain a description of the preimage of each cusp in the minimal desingularization $X(\Gamma) \to \overline{Y}(\Gamma)$ as a cyclic configuration of $\mathbb{P}^1$'s with known self-intersection numbers. This data is essential to the computation of the geometric invariants of the surface in~\cref{sec:geom_invs}.

We start by describing the general procedure for desingularizing cusps for any congruence group $\Gamma < \GL_2^+(F)$.
Let $(a : c) \in \mathbb{P}^1(F)$ be a cusp of $Y(\Gamma)$. There exists a matrix $\gamma \in \GL_2^+(F)$ taking $(a : c)$ to $\infty = (1 : 0)$. The resolution of $(a:c)$ can then be described in terms of the isotropy group of $\gamma\Gamma \gamma^{-1}$ at~$\infty$, in other words, the subgroup of upper-triangular matrices in $\gamma \Gamma \gamma\inv$, seen as a transformation group on $\calH$. Hirzebruch's method directly applies when this transformation group can be written as
\begin{equation} \label{eqn:GMV}
G(M, V) \colonequals \begin{pmatrix} V & M \\ 0 & 1\end{pmatrix},
\end{equation}
where $V \subseteq F^\times_{>0}$ is a group of totally positive units and $M\subset F$ is a $\Z$-module of rank~$2$ such that $VM=M$. We say that the cusp $(a : c)$ is of \defi{type} $G(M, V)$.

This condition is known to hold when~$\Gamma= \Gamma(\frakN)_{\frakb}$; we will see that it also holds when~$\Gamma$ is one of~$\Gamma_0(\frakN)_{\frakb}$, $\Gamma_0^1(\frakN)_{\frakb}$, $\Gamma_1^1(\frakN)_{\frakb}$, and also~$\Gamma_1(\frakN)_{\frakb}$ if~$\frakN$ is squarefree. 

\begin{remark}
For a general congruence subgroup~$\Gamma$ such that $\rmP\!\Gamma(\frakN)_{\frakb} \leq \rmP\!\Gamma$, a promising computational approach is to resolve the cusps for the action of~$\Gamma(\frakN)_{\frakb}$ and quotient the resulting smooth surface by the finite group $\rmP\!\Gamma_\frakb/\rmP\!\Gamma(\frakN)_{\frakb}$; we do not pursue this further in this paper.
\end{remark}

Once~$G(M,V)$ is known, the intersection numbers we are looking for can be explicitly computed as follows.  Let $v, v^{\prime} \colon F \hookrightarrow \bbR$ be the two ordered embeddings of $F$ in $\bbR$. We say that a $\bbZ$-basis~$(x, y)$ of $M$ is \defi{oriented} if 
\begin{equation}
v(x) v^{\prime}(y) - v^{\prime}(x) v(y) > 0.
\end{equation}
Necessarily, exactly one of $(x, y)$ or $(y, x)$ is oriented. Given an oriented basis $(x,y)$ of~$M$, we compute the \defi{Hirzebruch--Jung continued fraction} of $v(x/y)$, as follows:
\begin{equation}
v(x/y) = b_0 - \frac{1}{b_1 - \displaystyle\frac{1}{b_2 - \dots}} =: [[b_0, b_1, \dots]],
\end{equation}
where $b_0$ is the smallest integer greater than $v(x/y)$, then $b_1$ is the smallest integer greater than~$1/(v(x/y) - b_0)$, and so forth. This continued fraction is periodic; let~$d$ be its period. For each~$1\leq j\leq d$, we define
$$
w_j \colonequals [[b_j,b_{j+1},\ldots]].
$$
Let~$k\geq 1$ be minimal such that $(w_1\cdots w_d)^k\in V$. Then the resolution of the cusp~$(a:c)$ is a cyclic configuration of~$dk$
curves isomorphic to $\bbP^1$, with intersection numbers
$
(-b_0,\ldots,-b_{d-1})$ repeated~$k$ times,
except when~$dk=1$, in which case we find one rational curve with an ordinary double point and self-intersection $-b_0+2$ \cite[Chapter II, Lemma 3.2]{vdG}.

Our new contribution in this section is the explicit description of the type of a cusp~$(a:c)$ for standard congruence subgroups. Without loss of generality, we can assume~$\frakb$ and~$\frakN$ to be coprime. We also normalize our cusp representatives $(a : c)$ as follows: we assume that $a\in R,\, c \in \frakb$, and that the (now integral) ideal $\fraks = aR + c\frakb^{-1}$ is coprime to~$\frakN$.

\begin{lemma}
\label{lem:cusp-change}
    Let~$\fraks = a R + c\frakb^{-1}$.
    Then there exists $\lambda\in \fraks\inv$ and~$\mu\in \fraks\inv\frakb\inv$ such that
    \begin{equation} 
    \gamma = \left(\begin{matrix} \lambda & \mu\\ -c&a \end{matrix}\right) \in
    \left(\begin{matrix} \fraks^{-1} & \fraks^{-1} \frakb^{-1} \\ \fraks \frakb & \fraks \end{matrix}\right) \cap \GL^+_2(F)
    \end{equation}
    satisfies~$\det(\gamma)=1$ and $\gamma(a:c) = \infty$.
\end{lemma}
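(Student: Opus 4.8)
The plan is to write $\gamma$ down directly from an ideal-theoretic B\'ezout identity and then verify each claim by inspection. First I would record the containments that come for free from the definition $\fraks = aR + c\frakb\inv$: since $aR \subseteq \fraks$ we have $a \in \fraks$, and since $c\frakb\inv \subseteq \fraks$ we have $c \in \frakb\fraks$. Thus the bottom row $(-c, a)$ already lies in $\fraks\frakb \times \fraks$, matching the lower two entries of the matrix ring in the statement, so only the top row needs to be produced.

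The key step is to clear denominators in the defining identity: multiplying $\fraks = aR + c\frakb\inv$ by $\fraks\inv$ gives $R = \fraks\inv\fraks = a\fraks\inv + c\fraks\inv\frakb\inv$, using that $R$ is a Dedekind domain so $\fraks$ is invertible with $\fraks\inv\fraks = R$ exactly. In particular $1 \in a\fraks\inv + c\fraks\inv\frakb\inv$, so there exist $\lambda \in \fraks\inv$ and $\mu \in \fraks\inv\frakb\inv$ with $\lambda a + \mu c = 1$. Taking these $\lambda, \mu$ and setting $\gamma = \left(\begin{smallmatrix}\lambda & \mu\\ -c & a\end{smallmatrix}\right)$, the four entries lie in $\fraks\inv$, $\fraks\inv\frakb\inv$, $\fraks\frakb$, $\fraks$ respectively, so $\gamma$ belongs to the asserted matrix ring.

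It then remains to check the three numerical claims, all immediate: $\det\gamma = \lambda a - \mu(-c) = \lambda a + \mu c = 1$, which is totally positive, so $\gamma \in \GL_2^+(F)$; and from $\gamma\left(\begin{smallmatrix}a\\ c\end{smallmatrix}\right) = \left(\begin{smallmatrix}\lambda a + \mu c\\ -ca + ac\end{smallmatrix}\right) = \left(\begin{smallmatrix}1\\ 0\end{smallmatrix}\right)$ we conclude $\gamma(a:c) = (1:0) = \infty$. There is essentially no obstacle here: the argument is a one-line B\'ezout identity dressed up, and the only point worth flagging is that the normalization hypotheses imposed just before the lemma (that $a \in R$, $c \in \frakb$, and $\fraks$ is integral and coprime to $\frakN$) are not needed for this statement; they will only enter afterwards, when one uses $\gamma$ to compute the conjugated isotropy group $\gamma\Gamma\gamma\inv$ at $\infty$ and match it to a group of the form $G(M,V)$.
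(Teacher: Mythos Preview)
Your proof is correct and follows essentially the same approach as the paper: both arguments multiply the defining identity $\fraks = aR + c\frakb^{-1}$ by $\fraks^{-1}$ to obtain $1 \in a\fraks^{-1} + c\fraks^{-1}\frakb^{-1}$, which yields $\lambda,\mu$ with $\lambda a + \mu c = 1$. Your write-up is in fact more thorough, since you spell out the verifications that $\det\gamma = 1$, that $\gamma \in \GL_2^+(F)$, and that $\gamma(a:c) = \infty$, all of which the paper leaves to the reader.
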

\begin{proof}
    Following van der Geer \cite[Proposition I.1.1]{vdG}, write
    \begin{equation}1 \in \fraks^{-1}\fraks = a \,\fraks^{-1} + c \,\fraks^{-1} \frakb^{-1}.\end{equation}
    Therefore there exist~$\lambda\in \fraks^{-1}$ and~$\mu\in \fraks^{-1}\frakb^{-1}$ such that~$\lambda a+\mu c = 1$.
\end{proof}

We now describe the types $G(M, V)$ of each cusp  for the above level subgroups.

\begin{prop}
    \label{prop:explicit-MV}
    Let~$\fraks = aR + c\frakb^{-1}$ as above, and let~$\frakM = \frakM(a,c) = \frakN + c (\frakb\fraks)^{-1}$. Then
    \begin{enumalph}
    
    \item For level~$\Gamma_0(\frakN)_{\frakb}$, the cusp~$(a:c)$ is of type~$G(M,V)$ where
    \begin{align*}
        M &= \fraks^{-2} \frakb^{-1} \frakN\, (\frakN + \frakM^2)^{-1}
        , \text{ and}\\
        V &= \bigl\{v \in R^\times_{>0}: v\equiv 1 \psmod{ 
        \frakM + \frakM/\frakN}
        \bigr\}.
    \end{align*}
    
    \item For level~$\Gamma_0^1(\frakN)_{\frakb}$, the cusp~$(a:c)$ is of type~$G(M,V)$ where
    \begin{align*}
        \quad\qquad M &= \fraks^{-2} \frakb^{-1} \frakN\, (\frakN + \frakM^2)^{-1}
        , \text{ and}\\
        V &= \bigl\{v^2: v\in R^\times\textup{ and  }v^2 \equiv 1 \psmod{
        \frakN + \frakN/\frakM}
        \bigr\}.
    \end{align*}
    
    \item For level~$\Gamma_1^1(\frakN)_{\frakb}$, the cusp~$(a:c)$ is of type~$G(M,V)$ where
    \begin{align*}
        \qquad\qquad\qquad M &= \fraks^{-2} \frakb^{-1} (\frakN/\frakM)
        , \text{ and}\\
        V &= \bigl\{v^2: v\in R^\times\textup{ and  }v \equiv 1 \psmod{
        \frakN\, \frakM\, (\frakN + \frakM^2)^{-1}}
        \bigr\}.
    \end{align*}
    
    \item Assume that~$\frakN$ is squarefree. 
    Let~$U\subset R^\times$ be the subgroup of units congruent to~$1$ modulo $\frakN/\frakM$. Then for level~$\Gamma_1(\frakN)_{\frakb}$, the cusp~$(a:c)$ is of type~$G(M,V)$ where
    \begin{align*}
        \qquad\qquad M &= \fraks^{-2} \frakb^{-1} (\frakN/\frakM), \text{ and}\\
        V &= \bigl\{v\in R^\times_{>0}: v \equiv \varepsilon \psmod{ \frakM} \textup{ for some }\varepsilon\in U
        \bigr\}.
    \end{align*}
    \end{enumalph}
    In each case, we can compute a matrix~$\gamma\in \GL_2^+(F)$ sending~$(a:c)$ to~$\infty$ such that
    \begin{equation}
        \gamma^{-1}  
        \left(\begin{matrix} v&m\\0&1 \end{matrix}\right) \gamma 
    \end{equation}
    lies in the chosen congruence subgroup of~$\Gamma_\frakb$ (as a transformation, i.e.~up to scalars) precisely when~$(v,m)\in V\times M$.
\end{prop}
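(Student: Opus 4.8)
\medskip

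The plan is to conjugate the cusp to $\infty$ using the matrix $\gamma$ of \Cref{lem:cusp-change} and read off the isotropy group of $\infty$ in the conjugated group. Set $\fraka\colonequals a\fraks^{-1}$ and $\frakc\colonequals c(\frakb\fraks)^{-1}$; by the normalizations these are coprime integral ideals with $\fraka+\frakc=R$ and $\frakM=\frakN+\frakc$. Writing $\gamma=\left(\begin{smallmatrix}\lambda&\mu\\-c&a\end{smallmatrix}\right)$ with $\det\gamma=\lambda a+\mu c=1$, a direct multiplication gives
\begin{equation*}
\gamma^{-1}\begin{pmatrix}v&m\\0&1\end{pmatrix}\gamma=\begin{pmatrix}1+a\alpha&a\beta\\c\alpha&1+c\beta\end{pmatrix},\qquad\alpha=\lambda(v-1)-mc,\quad\beta=\mu(v-1)+ma,
\end{equation*}
of determinant $v$. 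The transformation $[v,m]\colon z\mapsto vz+m$ lies in the isotropy group of $\infty$ in $\rmP(\gamma\Gamma_\frakb\gamma^{-1})$ precisely when some $s\cdot\gamma^{-1}\left(\begin{smallmatrix}v&m\\0&1\end{smallmatrix}\right)\gamma$ with $s\in F^\times$ belongs to $\Gamma_\frakb$. I want to show this set of pairs is $V\times M$; at the end I will replace $\gamma$ by $\left(\begin{smallmatrix}1&t\\0&1\end{smallmatrix}\right)\gamma$ for a suitable $t\in F$ — which sends $m\mapsto m+t(v-1)$ but fixes $v$ — to put the isotropy group in the standard form $G(M,V)$.

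I would record two preliminaries. (i) Any $v$ that occurs is a totally positive unit: if $\delta\in\Gamma_\frakb$ fixes $(a:c)$, then its characteristic polynomial is monic over $R$ with unit constant term $\det\delta\in R_{>0}^{\times}$, so both eigenvalues $\kappa_1,\kappa_2$ lie in $R^{\times}$; conjugating by $\gamma$ (which sends the $\kappa_1$-eigenline to $\infty$) makes $\delta$ upper triangular with diagonal $(\kappa_1,\kappa_2)$, so the multiplier is $v=\kappa_1/\kappa_2=\kappa_1^2(\det\delta)^{-1}\in R_{>0}^{\times}$, a square of a unit for the two $\SL_2$-groups. (ii) Once $v$ is fixed, the scaling $s$ is forced by the determinant to be a unit for $\Gamma_0(\frakN)_\frakb$ and to be $\pm v^{-1/2}$ for $\Gamma_0^1(\frakN)_\frakb$; in both $\Gamma_0$-cases every remaining membership condition is scale-invariant, so $s$ plays no role. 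By contrast, for the $\Gamma_1$-type groups the condition ``top-left entry $\equiv1\psmod{\frakN}$'' is \emph{not} scale-invariant: for $\Gamma_1(\frakN)_\frakb$ the unit $s$ is free and contributes (this is the source of the clause ``$\equiv\varepsilon\psmod{\frakM}$ for some $\varepsilon\in U$''), while for $\Gamma_1^1(\frakN)_\frakb$ the pinned $s=\pm v^{-1/2}$ turns that condition into ``top-left $\equiv\pm\sqrt v\psmod{\frakN}$''.

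Next I would compute the two slices. \emph{Unipotent slice} $v=1$: the conjugate is $\left(\begin{smallmatrix}1-mac&ma^2\\-mc^2&1+mac\end{smallmatrix}\right)\in\SL_2$, so membership becomes $mac\in R$ (or $mac\in\frakN$ for the $\Gamma_1$-type groups), $ma^2\in\frakb^{-1}$, $mc^2\in\frakN\frakb$; rewriting via $aR=\fraka\fraks$ and $cR=\frakc\fraks\frakb$ and comparing valuations at each prime (using $\fraka+\frakc=R$) shows the set of such $m$ is $\fraks^{-2}\frakb^{-1}\frakN(\frakN+\frakM^2)^{-1}$ in the $\Gamma_0$-cases and $\fraks^{-2}\frakb^{-1}(\frakN/\frakM)$ in the $\Gamma_1$-cases — the stated modules $M$. \emph{Torus slice}: for fixed $v$ the valid $m$ form a coset of $M$ (subtract two solutions), so I test one representative. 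I would first choose $\lambda$ — allowed, since \Cref{lem:cusp-change} only demands $\lambda\in\fraks^{-1}$ with $\lambda a\equiv1\psmod{\frakc}$, and weak approximation lets me prescribe valuations at the finitely many primes of $\frakN$ — so that $\lambda$ generates $\fraks^{-1}$ at each prime of $\frakN$, whence $a\lambda$ generates $\fraka$ and $c\lambda$ generates $\frakc\frakb$ there. Then the scale-invariant conditions pin $m$ to the lattice $\fraks^{-2}\frakb^{-1}\fraka^{-1}$; the lower-left condition $c\alpha\in\frakN\frakb$ becomes the solvability of $mc^2\equiv c\lambda(v-1)\psmod{\frakN\frakb}$ there, which a local computation converts into a congruence on $v$ modulo $\frakM+\frakN/\frakM$; and for the $\Gamma_1$-type groups the (suitably scaled) top-left condition, via $a\lambda\equiv1\psmod{\frakc}$ and — for $\Gamma_1(\frakN)_\frakb$ — the hypothesis that $\frakN$ is squarefree (so $\frakM$ and $\frakN/\frakM$ are coprime and the condition splits across the two factors), refines this to the asserted $V$.

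Finally, $VM=M$ since $V\subseteq R^{\times}$ and $M$ is a fractional $R$-ideal, so the isotropy group is an extension of $V$ by $M$; as $n=2$ the group $V\subseteq R_{>0}^{\times}$ is cyclic, hence the crossed homomorphism measuring the deviation from a direct product is principal (solve $(\varepsilon_0-1)t=m_0(\varepsilon_0)$ in $F$ for a generator $\varepsilon_0$ of $V$), and replacing $\gamma$ by $\left(\begin{smallmatrix}1&t\\0&1\end{smallmatrix}\right)\gamma$ makes the isotropy group equal to $G(M,V)$; the final assertion of the statement is then precisely the combination of the two slice computations. The main obstacle is the torus step for the $\Gamma_1$-type levels: one must track the free-or-pinned scalar $s$ through the ``$\equiv1\psmod{\frakN}$'' condition, choose $\lambda$ so the congruences stay clean, and use squarefreeness of $\frakN$ at exactly the right point; the unipotent step and the entire $\Gamma_0$-type analysis are, by comparison, routine bookkeeping.
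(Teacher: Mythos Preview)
Your argument is correct and reaches the same destination as the paper, but the organization differs in a way worth noting. The paper works prime-by-prime from the outset: at each $\frakp_i \mid \frakN$ it writes down the membership condition on $\gamma^{-1}\left(\begin{smallmatrix}v&m\\0&1\end{smallmatrix}\right)\gamma$ as a congruence involving both $v$ and $m$, then performs a \emph{local} affine change of variables $m \mapsto m + x(v-1)$ to separate out the necessary condition on $v$ (the local contribution to $V$) from the residual condition on $m$ (the local contribution to $M$); the global change of variables is assembled at the end by CRT. You instead slice the isotropy group into its unipotent part ($v=1$, yielding $M$ directly) and its torus quotient (yielding $V$), clean up the torus slice by adjusting $\lambda$ via weak approximation rather than shifting $m$, and then invoke a cohomological observation---$V\subset R_{>0}^\times$ is cyclic when $n=2$, so every crossed homomorphism $V\to F$ is principal---to produce a single global translation $\left(\begin{smallmatrix}1&t\\0&1\end{smallmatrix}\right)$ splitting the extension. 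Your packaging is conceptually cleaner (it names the semidirect-product structure and explains \emph{why} it splits), while the paper's is more hands-on (every congruence is written out explicitly at each prime, which is closer to what one implements). The underlying local computations you gesture at in the torus slice are exactly the case analyses the paper carries out in full, so the two approaches are equivalent in content.
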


\begin{proof}
    Let~$\gamma = 
    \left(\begin{smallmatrix} \lambda & \mu \\ -c & a \end{smallmatrix}\right)$ be as in \Cref{lem:cusp-change}. As transformations of~$\calH$, elements in the stabilizer of~$(a:c)$ take the form
    \begin{equation}
        N = \gamma^{-1}  
        \left(\begin{matrix} v&m\\0&1 \end{matrix}\right)
        \gamma = \left(\begin{matrix} 
        1 + a\bigl(\lambda(v-1) - c m\bigr) &
        a\bigl(\mu(v-1) + a m\bigr) \\
        c\bigl(\lambda(v-1) - c m\bigr) &
        v - a\bigl(\lambda(v-1) - c m\bigr)
        \end{matrix}\right)
    \end{equation}
    for some~$v\in F^\times_{>0}$ and~$m\in F$. Then~$N$ lies in~$\Gamma(1)_{\frakb}$ if and only if~$v\in R^\times_{>0}$ and $m\in \fraks^{-2}\frakb^{-1}$.  Note that by multiplying~$\gamma$ on the left by a matrix of the form
    \begin{equation} \left(\begin{matrix} 1&x\\0&1 \end{matrix}\right), \end{equation}
    we can make an affine change of variables of the form~$m\mapsto m+x(v-1)$ for any~$x\in F$, provided that $x(v-1)$ remains in $s^{-2}\frakb^{-1}$.
    
    Factor~$\frakN = \prod_{i\in I} \frakp_i^{e_i}$, where each~$\frakp_i$ is a prime of~$R$, and write $\frakM = \prod_{i\in I} \frakp_i^{f_i}$, where $0\leq f_i \leq e_i$ for all~$i$.
    The exponents~$f_i$ are related to the factorization of~$c$: we can write
    \begin{equation}
    c = c' \prod_{i\in I} \frakp_i^{f_i}
    \end{equation}
    where~$c'$ is prime to~$\frakp_i$ whenever~$f_i < e_i$.
    We now separate the four cases.

    \begin{enumalph}
    \item \emph{Level~$\Gamma_0(\frakN)_{\frakb}$.} We have~$N\in \Gamma_0(\frakN)_{\frakb}$ if and only if for all~$i\in I$, we have
    \begin{equation}\lambda(v-1) - c m \equiv 0 \pmod{\frakp_i^{e_i-f_i}}.\end{equation}
    If~$c$ is invertible modulo~$\frakp_i$, then after an affine change of variables as above, this relation becomes~$m \equiv 0 \pmod{\frakp_i^{e_i}}$. Otherwise,~$\lambda$ is invertible modulo~$\frakp_i$ (because~$\det g$ is a unit, and~$\fraks$ and~$\frakb$ are coprime to~$\frakN$). A necessary condition is that~$v \equiv 1\pmod{\frakp_i^{\min\{f_i, e_i-f_i\}}}$. Then another change of variables brings this relation into the form~$m \equiv0 \pmod{\frakp_i^{\max\{e_i-2f_i, 0\}}}$.

    \item \emph{Level~$\Gamma_0^1(\frakN)_{\frakb}$.} Compared to the case of~$\Gamma_0(\frakN)_{\frakb}$, we only have to add the condition that~$v$ is a square.

    \item \emph{Level~$\Gamma_1^1(\frakN)_{\frakb}$.} We have $N\in \Gamma_1^1(\frakN)_{\frakb}$ up to multiplication by a scalar matrix if and only if the following conditions hold for all $i\in I$:
    \smallskip
        \begin{itemize}
            \setlength{\itemsep}{3pt}
            \item $v=w^2$ is a square,
            \item $\lambda(w-w^{-1}) - c m \equiv 0 \pmod{\frakp_i^{e_i-f_i}}$, and
            \item $w^{-1} + a\bigl(\lambda(w-w^{-1}) - c m\bigr) = w + c\bigl(\mu(w^{-1}-w) - am\bigr)$ is equal to 1 modulo~$\frakp_i^{e_i}$.
        \end{itemize}
    \smallskip
    \noindent
    This implies a third relation
    \begin{equation}
    w - a\bigl(\lambda(w-w^{-1})-cm\bigr) \equiv 1 \pmod{\frakp_i^{e_i}}
    \end{equation}
    which can also be deduced from the determinant 1 condition.
    If~$f_i=0$, we can rewrite this system as $m\equiv 0 \pmod{\frakp_i^{e_i}}$ and $w\equiv 1 \pmod{\frakp_i^{e_i}}$. Otherwise, the second equation gives~$w\equiv 1\pmod{\frakp_i^{f_i}}$. Write~$w = 1+\eta$ where the valuation of~$\eta$ at~$\frakp_i$ is at least~$f_i$. Summing the second and third relations, we see that the valuation of~$\eta$ must be at least~$e_i/2$. The equations become:
    \begin{equation}2\lambda \eta - c m \equiv 0 \pmod{\frakp_i^{e_i - f_i}}, \quad
            a\bigl(2\lambda\eta - c m\bigr) \equiv \eta \pmod{\frakp_i^{e_i}}.
    \end{equation}
    Since~$a$ is invertible modulo~$\frakp_i$, the valuation of~$2\eta$ must be at least~$e_i - f_i$. (Note that $\max\{e_i-f_i, f_i\}\geq e_i/2$ always.) The final equation can also be rewritten as $(1+2\mu c)\eta = ac m \pmod{\frakp_i^{e_i}}$. Since the valuation of~$\eta$ is at least~$f_i$, an affine change of variables brings this relation into the form~$m = 0 \pmod{\frakp_i^{e_i-f_i}}$.
    
    \item \emph{Level~$\Gamma_1(\frakN)_{\frakb}$.} In general, we have~$N\in \Gamma_1(\frakN)_{\frakb}$ up to multiplication by a scalar matrix if and only if there exists a unit~$\varepsilon\in R^\times$ such that the following conditions hold for all~$i\in I$:
    \begin{equation}
    \begin{aligned}
        \lambda(v-1) - c m &\equiv 0 \pmod{\frakp_i^{e_i-f_i}}, \text{ and}\\
        1 + a\bigl(\lambda(v-1) - c m\bigr) &= v + c\bigl(\mu(1-v) - am\bigr) \equiv \varepsilon \pmod{\frakp_i^{e_i}}.
    \end{aligned}
    \end{equation}
    Assuming~$\frakN$ squarefree, the only cases are~$f_i=0$ and~$f_i=1$ (both with~$e_i=1$). In the first case, after a change of variables, the system becomes~$m\equiv 0\pmod {\frakp_i}$ and~$\varepsilon\equiv 1 \pmod{\frakp_i}$, thus~$\varepsilon\in U$. In the second case, the system reads~$v\equiv \varepsilon \pmod{\frakp_i}$.
    \end{enumalph}

    The required forms of~$G(M,V)$ are then obtain from recombining the local conditions.
    In each case, a suitable global change of variables $m\mapsto m+x(v-1)$ can be  computed using the Sun Zi theorem (CRT) to obtain the desired matrix~$\gamma$.
\end{proof}

\section{Elliptic points} \label{sec:elliptic_points}

In general, a congruence subgroup will not act freely on $\calH$. In this section, we seek to understand algorithmically the fixed points of the action.  The key idea will be to relate the number of elliptic points with optimal embedding numbers of quadratic orders \cite[Chapters 30 and 39]{Voight}.

\subsection{Setup}
\label{sec:elliptic_pts_setup}

We refer to van der Geer \cite[\S I.5]{vdG} for basic facts about elliptic points.  Let $\Gamma \leq \GL_2^+(F)$ be a standard congruence subgroup \eqref{eqn:gammagroups} and let $\calO$ be the $R$-order in $\M_2(F)$ generated by $\Gamma$.  

Recall that the action of $\Gamma$ on $\calH$ factors through the faithful action of the group $\rmP\!\Gamma \leq \PGL_2^+(F)$.  We will understand stabilizers in the quotient modulo scalars, but will then want to lift them back to $\GL_2^+(F)$.  

If $\overline{\gamma} \in \rmP\!\Gamma \smallsetminus \{1\}$ has a fixed point $z \in \calH$, then this point is isolated and $\tr(\overline{\gamma})^2 - 4 \det(\overline{\gamma})$ is (well-defined up to squares and) totally negative; in this case, we call $\overline{\gamma}$ \defi{elliptic} and its fixed point an \defi{elliptic point} for $\Gamma$ (or $\rmP\!\Gamma$).  For any $z \in \calH$, the stabilizer $\Stab_{\rmP\!\Gamma}(z) \colonequals \{\gamma \in \rmP\!\Gamma : \gamma z = z\}$ is a finite cyclic group.  When this cyclic group is nontrivial generated by $\overline{\gamma}$, we obtain a quadratic CM (totally imaginary) extension $K \colonequals F(\gamma)$.  We descend this to an order as follows.

Let $z \in \calH$ be an elliptic point, and let $\gamma \in \Gamma$ be such that the image $\overline{\gamma} \in \rmP\!\Gamma$ generates the (nontrivial) cyclic group $\Stab_{\rmP\!\Gamma}(z)$.  Let $q$ be the order of $\overline{\gamma}$.  Rescaling by a unit, we may suppose without loss of generality that $\det(\gamma) \in R_{>0}^{\times}$ belongs to a fixed set of representatives for $R_{>0}^\times/R_{>0}^{\times 2}$ including $1$.

\begin{lemma} \label{lem:possibleEllipticOrders}
The following statements hold.
\begin{enumalph}
\item If $\det(\gamma)=1$, then $R[\gamma] \iso R[\zeta]$ for $\zeta$ a primitive root of unity of order $2q$ such that $\zeta + \zeta^{-1} \in F$; if $[F:\Q]=2$, then $1 < q \leq 6$. 
\item 
If $\det(\gamma)=u \not\in R_{>0}^{\times 2}$, then $2 \mid q$ and $\gamma^2=u^{-1}\zeta$ where $\zeta$ is a root of unity of order $q$ with again $\zeta+\zeta^{-1} \in F$; if $[F:\Q]=2$, then $1 \leq q \leq 6$.  
\end{enumalph}
\end{lemma}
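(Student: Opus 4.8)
The plan is to analyze the subring $R[\gamma] \subseteq \M_2(F)$ directly, using that $\overline{\gamma}$ has finite order $q$ in $\PGL_2^+(F)$. First I would set up part (a): if $\det(\gamma) = 1$, then $\gamma \in \SL_2(F)$, and $\overline\gamma$ having order $q$ means $\gamma^q = \pm I$ (scalar), while no smaller power is scalar. Since $\det \gamma = 1$, we get $\gamma^q = \pm I$ with the sign forced by taking determinants: $(\det\gamma)^q = (\pm 1)^2 = 1$, which is automatic, so both signs are a priori possible — but I would argue that $\gamma^q = I$ forces $\gamma$ to have order dividing $q$ in $\GL_2$, which combined with minimality of $q$ in $\PGL_2$ means $\gamma$ itself is a primitive $2q$-th root of unity case when $\gamma^q = -I$. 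In either case $\gamma$ satisfies a cyclotomic-type polynomial: its eigenvalues are roots of unity $\zeta, \zeta^{-1}$ (conjugate since $\det = 1$) with $\zeta$ of order $2q$ (if $\gamma^q = -I$) or order $q$ (if $\gamma^q = I$); the first case is the generic one and the order-$q$ case only happens when $q$ is odd and can be absorbed by noting $-\gamma$ then has order $2q$. Since $\gamma$ is not scalar, its minimal polynomial over $F$ has degree $2$, so $F(\gamma) = F(\zeta)$ is a quadratic CM extension, and $R[\gamma] \cong R[\zeta]$ follows because $\gamma$ is integral over $R$ (it's in the order $\calO$ generated by $\Gamma$, or directly satisfies a monic quadratic with coefficients $\tr\gamma \in R$, $\det\gamma = 1 \in R$). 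The constraint $\zeta + \zeta^{-1} = \tr(\gamma) \in F$ is immediate. For the degree bound when $[F:\Q] = 2$: $\Q(\zeta + \zeta^{-1}) \subseteq F$ has degree $\leq 2$ over $\Q$, so $\varphi(2q)/2 = [\Q(\zeta):\Q]/2 \leq 2$, giving $\varphi(2q) \leq 4$, hence $2q \in \{3,4,5,6,8,10,12\}$ so $q \in \{2,3,4,5,6\}$ (note $q=1$ is excluded since $\overline\gamma \neq 1$). I should double check $2q$ odd is impossible here since $2q$ is even, ruling out nothing extra; and $q \le 6$ is the stated bound.

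For part (b), suppose $\det(\gamma) = u \notin R_{>0}^{\times 2}$. Then $\gamma^2 \in \GL_2^+(F)$ has $\det(\gamma^2) = u^2 \in R_{>0}^{\times 2}$. The key point is that $\overline{\gamma^2} = \overline\gamma^2$ has order $q/\gcd(q,2)$ in $\PGL_2^+$; since $u \notin R_{>0}^{\times 2}$ the element $\gamma$ cannot already be a scalar times something with square determinant in the relevant sense — I would argue $q$ must be even, because if $q$ were odd then $\overline\gamma$ and $\overline\gamma^2$ generate the same cyclic group, so $\overline\gamma = \overline{\gamma^2}^{(q+1)/2} = \overline{\gamma^{q+1}}$, i.e. $\gamma = (\text{scalar}) \cdot \gamma^{q+1}$, and comparing determinants would force $u \equiv u^{q+1} \pmod{R_{>0}^{\times 2}}$, i.e. $u^q \in R_{>0}^{\times 2}$, but $q$ odd then gives $u \in R_{>0}^{\times 2}$, a contradiction. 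So $2 \mid q$. Then $\gamma^q = \pm(\text{scalar})$... more precisely $\overline\gamma^q = 1$ means $\gamma^q = c I$ for some $c \in F^\times$; taking determinants, $u^q = c^2$. Now $\gamma^2$ has square determinant $u^2$, and $u^{-1}\gamma^2 \in \SL_2(F)$... wait, $\det(u^{-1}\gamma^2) = u^{-2} u^2 = 1$. So $\zeta \colonequals u^{-1}\gamma^2$ — actually I want $\gamma^2 = u^{-1}\zeta$ as stated, so set $\zeta \colonequals u \cdot u^{-1}\gamma^2 = \gamma^2$... let me instead define $\zeta$ so that $\gamma^2 = u^{-1}\zeta$ forces $\zeta = u\gamma^2$, with $\det\zeta = u^2 \cdot u^2$? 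That's not a root of unity. I think the intended normalization is: $(u^{-1}\gamma^2)$ has determinant $1$ and its image in $\PGL_2$ has order $q/2$, so it's (by part (a)'s analysis applied to this element, or directly) $\pm$ a matrix of finite order; the cleanest statement is that the eigenvalues of $\gamma^2$ are $u\zeta_0, u\zeta_0^{-1}$ for a root of unity $\zeta_0$, so rescaling $\zeta \colonequals$ the eigenvalue-level root of unity of order $q$ gives $\gamma^2 = u^{-1}\zeta$ after matching; I would carefully track the eigenvalue computation here. The bound $1 \leq q \leq 6$ for $[F:\Q]=2$ follows the same cyclotomic argument: $\zeta + \zeta^{-1} = \tr(u^{-1}\gamma^2)/1 \in F$, so $\varphi(q) \leq 4$, and combined with $2 \mid q$ gives $q \in \{2,4,6\}$, and $q=1$ (i.e. $\overline\gamma$ trivial, excluded) is listed only for uniformity of the statement — actually the lemma writes $1 \leq q \leq 6$, so I'd note the edge interpretation.

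\medskip

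\noindent\emph{The main obstacle.} The genuinely delicate step is part (b): getting the precise normalization $\gamma^2 = u^{-1}\zeta$ with $\zeta$ a \emph{root of unity} (not merely a scalar multiple of one) of order exactly $q$, rather than order $q/2$ or $2q$. This requires carefully choosing $\zeta$ at the level of eigenvalues and checking that the $F$-rationality of $\tr(\gamma)$ and $\det(\gamma) = u$ together pin down that $u^{-1}\gamma^2$ has eigenvalues that are honest roots of unity — one must rule out the possibility that $u^{-1}\gamma^2$ has eigenvalues $\zeta$ and $\zeta'$ with $\zeta\zeta' = 1$ but $\zeta, \zeta'$ not roots of unity, which uses that $\overline\gamma$ has finite order so $\gamma^{2q}$ is scalar, forcing $\zeta^q = 1$. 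Tracking the interplay between the $R_{>0}^\times/R_{>0}^{\times 2}$ normalization of $\det\gamma$ (fixed before the lemma) and the order of the root of unity is where care is needed; the degree bounds are then a routine application of $\varphi(\text{order of }\zeta) \leq [\Q(\zeta+\zeta^{-1}):\Q]\cdot 2 \leq 2\cdot[F:\Q] = 4$.
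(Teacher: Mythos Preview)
Your approach matches the paper's: for (a) you argue $\gamma^q=\pm 1$, handle the parity of $q$, and bound via $\varphi(2q)\leq 4$, exactly as the paper does; for (b) the paper's entire proof is ``repeat (a) with $\gamma^2$ in place of $\gamma$,'' which is precisely your reduction through the determinant-$1$ element $u^{-1}\gamma^2$.

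Two small points. First, your argument that $2\mid q$ (which the paper elides) concludes $u\equiv u^{q+1}\pmod{R_{>0}^{\times2}}$, but the scalar $c$ satisfying $\gamma=c\gamma^{2k}$ lies only in $R^\times$ (it is a central element of $\Gamma$), so what you actually get is $u\in(R^\times)^2$; this still yields the contradiction once you note that rescaling $\gamma$ by any unit changes $\det\gamma$ by an element of $(R^\times)^2$, so the relevant quotient is $R_{>0}^\times/(R^\times)^2$. Second, your suspicion about the normalization is right: taking determinants in $\gamma^2=u^{-1}\zeta$ would force $u^4=1$, so the intended relation is $u^{-1}\gamma^2=\zeta$, i.e.\ $\gamma^2=u\zeta$ with $\zeta$ a root of unity of order $q$, which is what your eigenvalue analysis produces.
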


\begin{proof}
For part (a), since $\overline{\gamma}^q=1$ and $\det(\gamma)=1$, we have $\gamma^q \in F^\times$ so $\gamma^q =\pm 1$ since $F$ is totally real.  If $q$ is even, then since $\overline{\gamma}$ has order $q$ we must have $\gamma^q=-1$ so $\gamma^{2q}=1$; otherwise, $q$ is odd and either $\gamma^{q}=-1$ already or $(-\gamma q)$ has order $2q$.  In either case, $R[\gamma] \simeq R[\zeta]$ and $\tr(\gamma)=\zeta+\zeta^{-1} \in F$.  The second statement follows from enumeration of cyclotomic polynomials.  
For part (b), we repeat (a) with $\gamma^2$ in place of $\gamma$.
See also Prestel \cite[\S 8]{Prestel} and van der Geer \cite[p.~16]{vdG}.
\end{proof}

The collection of minimal polynomials for the elements $\gamma$ from \Cref{lem:possibleEllipticOrders} is finite and effectively computable.  We let $\Omega_q$ denote the isomorphism classes of $R$-orders $S$ such that $S \iso R[\gamma]$ and $\langle \gamma \rangle = S^\times/R^\times$ has order $q$.

The assignment $\langle \overline{\gamma} \rangle \mapsto R[\gamma]$ described above yields a bijection from the set of $\Gamma$-conjugacy classes of elliptic points to the $\Gamma$-conjugacy classes of pairs $\phi, \overline{\phi}\colon S \hookrightarrow \calO$, where $S$ is a CM $R$-order with $S^\times > R^\times$ and $\overline{\phi}$ denotes the precomposition by the nontrivial involution of $S$.  

Thus, one can compute elliptic points in two steps. First, determine the list of $R$-orders $R[\gamma]$ such that
$\gamma$ generates a quadratic CM extension. Second, for each $S$ in the aforementioned list, determine the number of embeddings of $S$
into $S$ up to $\Gamma$-conjugacy. The first task is explained by \Cref{lem:possibleEllipticOrders}.

We now turn to counting embeddings of a given order.
Let $K$ be a quadratic CM extension of $F$ with ring of integers~$\Z_K$, and let $S \subset K$ be an order of $K$. We also introduce the following notation: $\calO^1$ and $\calO^\times_{>0}$ denote the subgroups of~$\calO^\times$ consisting of matrices~$\gamma$ such that $\det(\gamma) = 1$ or~$\det(\gamma)\in F^\times_{>0}$, respectively.

We say that an embedding $\phi\colon S \hookrightarrow \calO$ is \defi{optimal} if $\phi(K) \cap \calO = \phi(S)$. Not all embeddings of $S$ are necessarily optimal; however, given an embedding $\phi$, there exists a unique order $S \subseteq S' \subset K$ such that $S'$ is optimally embedded under $\phi$, namely $S' \colonequals \phi^{-1}(\phi(K) \cap \calO)$. We then obtain the decomposition
\begin{equation}
    \{\text{embeddings of $S$ into $\calO$}\} = 
    \bigsqcup_{S \subseteq S' \subset K} \{\text{optimal embeddings $S' \hookrightarrow \calO$}\}.
\end{equation}
It is well-known how to enumerate such superorders $S'$. Let $\frakf_S$ be the conductor of~$S$. Then for every divisor $\frakd \mid \frakf_S$, there exists an order $S' \subset K$ containing $S$ with conductor $\frakd$ (as a $\Z_K$-module, so $\Z_K$ has trivial conductor).

For a group $\calO^1 \subset \Gamma \subset \calO^\times$, we denote by $m(S, \calO; \Gamma)$ the number of optimal embeddings of $S$ into $\calO$ up to $\Gamma$-conjugacy. The previous decomposition shows that
\begin{equation}
    \#\{\text{$\Gamma$-conjugacy classes of $\phi\colon S \hookrightarrow \calO$}\} 
    = \sum_{S \subseteq S' \subset K} m(S', \calO; \Gamma).
\end{equation}
In order to compute the numbers $m(S', \calO; \Gamma)$ we use a local-global principle. 
The field $K$ does not satisfy the optimal selectivity condition \cite[31.1.6, condition (a) of Proposition 31.2.1]{Voight} since $B=\M_2(F)$ is split at all real places.
Let $\Zhat \colonequals \smash{\underleftarrow{\lim}_n} \Z / n \Z$ be the profinite completion of $\Z$, let $\Shat' \colonequals S' \otimes \Zhat$ and $\calOhat \colonequals \calO \otimes \Zhat$. 
Thus for every $R$-order $S' \subseteq K$ we have
    \begin{equation} \label{eqn:mso01xx}
        m(S',\calO; \calO^\times) = \frac{h(S)}{h(R)} m(\Shat',\calOhat;\calOhat^\times) ,
    \end{equation}
where $h(S) = \#\Pic S$ \cite[Corollary 31.1.10]{Voight} and $m(\Shat',\calOhat;\calOhat^\times)$ denotes the number of adelically optimal embeddings of the adelic order $\Shat'$, up to $\calOhat$-conjugacy.  
To recover $m(S', \calO; \Gamma)$, we use the formula \cite[Lemma~30.3.14]{Voight}
\begin{equation}
m(S',\calO; \Gamma) = m(S', \calO; \calO^\times) [\nrd(\calO^\times) : \nrd(\Gamma) \nrd(S'^\times)].
\end{equation}
The adelic embedding numbers $m(\Shat,\calOhat;\calOhat^\times)$, a product 
    \begin{equation}
        m(\Shat,\calOhat;\calOhat^\times) = \prod_{\frakp \mid \frakN} m_{\frakp}( S,\calO;\calO^\times)
    \end{equation}
of (finitely many) local embedding numbers which are given explicitly \cite[\S 30.6 and \S 30.7]{Voight}.

\subsection{Formula}\label{sec:ellipticPointsFormula} For simplicity, we concentrate on the case of the modular groups~$\Gamma_0(\frakN)_{\frakb}$ and~$\Gamma_0^1(\frakN)_{\frakb}$. By \eqref{eqn:g01bconj}, we may and do suppose that $\frakb$ is coprime to the order of any torsion point, i.e., coprime to $2,3,5$. Consider the order
\begin{equation}
\calO=\calO_0(\frakN)_\frakb=\begin{pmatrix} R & \frakb^{-1} \\ \frakN\frakb & R \end{pmatrix}
\end{equation} 
 and notice that $\calO^1 = \Gamma_0^1(\frakN)_\frakb$ and $\calO_{>0}^{\times} = \Gamma_0(\frakN)_\frakb$.

For $q \geq 2$, we denote by $m_q^1$ and~$m_q^+$ the number of elliptic points of order $q$ in $\calO^1\backslash\calH$ and~$\calO_{>0}^\times\backslash\calH$, respectively. From the previous section we have that
    \begin{equation}  \label{eqn:embed0q}
    \begin{aligned}
        m_q^1 & = \frac{1}{2} \sum_{\substack{S \supseteq R[\zeta_{2q}] \\ \#S_{\textup{tors}}^\times=2q}} m(S,\calO;\calO^{1}),\\
        m_q^+ & = \frac{1}{2} \sum_{S \in \Omega_q} \sum_{\substack{S' \supset S \\ \#S'^\times/R^\times = q}} m(S',\calO;\calO_{>0}^{\times}).
        \end{aligned}
    \end{equation}

\begin{prop} \label{prop:embedclassno}
    We have
    \begin{equation}
        m_q^1=\frac{2^{n-1}}{h(R)}\sum_{S} \frac{h(S)}{Q(S)}\, m(\Shat,\calOhat;\calOhat^\times) 
    \end{equation}
    and
    \begin{equation}
        m_q^+=\frac{2^{n-1}}{h^+(R)}\sum_{S} h(S)\, m(\Shat,\calOhat;\calOhat^\times), 
    \end{equation}
    where $S$ runs over orders as in \eqref{eqn:embed0q}
    and $Q(S)$ is the Hasse unit index.
\end{prop}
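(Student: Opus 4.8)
The plan is to start from the two identities in \eqref{eqn:embed0q} and rewrite each term $m(S',\calO;\Gamma)$ occurring there — with $\Gamma=\calO^1$ in the formula for $m_q^1$, and $\Gamma=\calO_{>0}^\times$ in the formula for $m_q^+$ — entirely in terms of the adelic embedding number $m(\Shat',\calOhat;\calOhat^\times)$. For this I would chain together the two results recalled in \cref{sec:elliptic_pts_setup}: first $m(S',\calO;\Gamma)=m(S',\calO;\calO^\times)\,[\nrd(\calO^\times):\nrd(\Gamma)\nrd((S')^\times)]$, and then $m(S',\calO;\calO^\times)=\bigl(h(S')/h(R)\bigr)\,m(\Shat',\calOhat;\calOhat^\times)$. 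Once this substitution is in place, the whole proposition reduces to computing the two unit-group indices $[\nrd(\calO^\times):\nrd(\calO^1)\nrd((S')^\times)]$ and $[\nrd(\calO^\times):\nrd(\calO_{>0}^\times)\nrd((S')^\times)]$, and verifying that, together with the factor $\tfrac12$ in \eqref{eqn:embed0q}, they account exactly for the constants $2^{n-1}/Q(S')$ and $2^{n-1}h(R)/h^+(R)$.

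To compute the indices I would first identify the four reduced-norm groups. Since $\nrd$ is the determinant on $\calO=\calO_0(\frakN)_{\frakb}$, the diagonal units $\bigl(\begin{smallmatrix} u & 0 \\ 0 & 1\end{smallmatrix}\bigr)\in\calO^\times$ (for $u\in R^\times$) show that $\nrd(\calO^\times)=R^\times$, while $\nrd(\calO^1)=\{1\}$ and $\nrd(\calO_{>0}^\times)=\nrd(\Gamma_0(\frakN)_{\frakb})=R_{>0}^\times$ by the same diagonal elements. Moreover $\nrd((S')^\times)=\Nm_{K/F}((S')^\times)$, and since $K/F$ is CM every such norm $x\bar x$ is totally positive, so $\Nm_{K/F}((S')^\times)\subseteq R_{>0}^\times$. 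Consequently, in the $m_q^+$ case the index collapses to $[R^\times:R_{>0}^\times]=\#\sgn(R^\times)$, which by the signs exact sequence \eqref{eqn:pm1rtimes} equals $2^n h(R)/h^+(R)$. In the $m_q^1$ case the index is $[R^\times:\Nm_{K/F}((S')^\times)]$, which I would evaluate through the chain $(R^\times)^2\subseteq \Nm_{K/F}((S')^\times)\subseteq R^\times$: the outer index is $[R^\times:(R^\times)^2]=2^n$ by Dirichlet's unit theorem, while $[\Nm_{K/F}((S')^\times):(R^\times)^2]=[\Nm_{K/F}((S')^\times):\Nm_{K/F}(R^\times)]=[(S')^\times:(S')^\times_{\textup{tors}}\,R^\times]=Q(S')$, using that the kernel of $\Nm_{K/F}$ on $(S')^\times$ is precisely $(S')^\times_{\textup{tors}}$ — a unit with all archimedean absolute values equal to $1$ is a root of unity (Kronecker) — together with $\Nm_{K/F}(R^\times)=(R^\times)^2$ and $R^\times\cap (S')^\times_{\textup{tors}}=\{\pm 1\}$. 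Hence $[R^\times:\Nm_{K/F}((S')^\times)]=2^n/Q(S')$.

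Assembling the pieces: for $m_q^1$ each summand of \eqref{eqn:embed0q} becomes $\tfrac12\cdot\tfrac{h(S)}{h(R)}\cdot\tfrac{2^n}{Q(S)}\,m(\Shat,\calOhat;\calOhat^\times)=\tfrac{2^{n-1}}{h(R)}\cdot\tfrac{h(S)}{Q(S)}\,m(\Shat,\calOhat;\calOhat^\times)$, and summing over the orders $S$ appearing there gives the first formula. For $m_q^+$ each summand becomes $\tfrac12\cdot\tfrac{h(S')}{h(R)}\cdot\tfrac{2^n h(R)}{h^+(R)}\,m(\Shat',\calOhat;\calOhat^\times)=\tfrac{2^{n-1}}{h^+(R)}\,h(S')\,m(\Shat',\calOhat;\calOhat^\times)$, and collapsing the double sum of \eqref{eqn:embed0q} into a single sum over the orders that occur yields the second formula. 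I expect the main obstacle to be exactly the unit-group bookkeeping of the middle paragraph: pinning down $\nrd((S')^\times)=\Nm_{K/F}((S')^\times)$ and its position inside $R^\times$, establishing the generalized Hasse-unit-index identity $[R^\times:\Nm_{K/F}((S')^\times)]=2^n/Q(S')$ for possibly non-maximal orders $S'$ (in particular that $\ker\bigl(\Nm_{K/F}|_{(S')^\times}\bigr)=(S')^\times_{\textup{tors}}$), and correctly matching $[R^\times:R_{>0}^\times]$ with $h^+(R)/h(R)$ via \eqref{eqn:pm1rtimes}. The remaining steps are routine substitutions into formulas already established.
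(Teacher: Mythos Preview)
Your proposal is correct and follows essentially the same route as the paper: both start from \eqref{eqn:embed0q}, apply the two formulas recalled in \cref{sec:elliptic_pts_setup} (the local-global identity \eqref{eqn:mso01xx} and the index formula for passing from $\calO^\times$-conjugacy to $\Gamma$-conjugacy), and then reduce everything to computing the relevant reduced-norm indices. The paper simply defers the $m_q^1$ unit-index computation to Voight \cite[Proposition~39.4.12]{Voight} and records $[R^\times:R^{\times2}]=2^n$, whereas you unpack that argument explicitly via $\ker(\Nm_{K/F}|_{(S')^\times})=(S')^\times_{\textup{tors}}$ to extract the factor $2^n/Q(S')$; for $m_q^+$ the paper computes $[\nrd(\calO_{>0}^\times)\nrd(S^\times):R^{\times2}]=[R_{>0}^\times:R^{\times2}]=h^+(R)/h(R)$, which is the complementary piece of your $[R^\times:R_{>0}^\times]=2^n h(R)/h^+(R)$ inside $[R^\times:R^{\times2}]=2^n$.
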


\begin{proof}
To compute $m_q^1$ we repeat the argument in Voight \cite[Proposition 39.4.12]{Voight}.  The only difference is that $\left[R^\times_{>_\Omega0} : R^{\times 2}\right]=\left[R^\times : R^{\times 2}\right] = 2^n$ because $\Omega=\emptyset$, i.e., there are no real ramified places in $B$.  For $m_q^+$, we repeat the same calculation with $\calO_{>0}^\times$, so 
\begin{equation}
    \left[\nrd\left(\calO_{>0}^\times\right)\nrd\left(S^\times\right) : R^{\times 2}\right]=\left[R^\times_{>0} : R^{\times 2}\right] = h^+(R)/h(R). \qedhere
\end{equation}
\end{proof}

Note that the number of elliptic points of a given order $q$ is independent of the component~$\frakb$. However, as we discuss in the next section, the rotation types may differ; consequently, their contributions to surface invariants may differ.

\subsection{Rotation factors} \label{sec:rotationfactors}

We follow Prestel \cite{Prestel}.  Let $z = (z_v)_v \in \calH$ be an elliptic point with stabilizer group $\langle \gamma \rangle \leq \Gamma_0(\frakN)_{\frakb}$ 
; let $\tr(\gamma) = t$ and $\det(\gamma) = u$, so that $\gamma^2-t\gamma+u=0$ and $t^2-4u$ is totally negative. 
If $\gamma = \left(\begin{smallmatrix} a & b \\ c & d \end{smallmatrix}\right)$, then
\begin{equation}
z_v = \frac{a_v - d_v}{2c_v} + \frac{1}{2|c_v|} \sqrt{t_v^2 - 4u_v}
\end{equation}
(roots in the upper half-plane).  The transformation ${z' \mapsto (z' - z)/(z' - \overline{z})}$ maps $z$ to $(0,\dots,0)$; the elliptic matrix $\gamma$ then acts as a rotation $z \mapsto \zeta z = (\zeta_v z_v)_v$ of the product of $n$ unit discs, rotating the $v$-component by 
\begin{equation}
\zeta_v = \left(\frac{t_v^2}{2u_v} - 1 \right) - i \sgn(t_v c_v) \sqrt{1 - \left(\frac{t_v^2}{2u_v} - 1 \right)^2},
\end{equation}
so each $\zeta_v$ is a primitive root of unity whose order matches $\gamma$.  We call the tuple $\zeta = (\zeta_v)_v$ the \defi{rotation factor} of $z$.

If $z'$ is another fixed point with stabilizer $\langle \gamma' \rangle$ with again $\tr(\gamma')=t$ and $\det(\gamma')=u$, then by the Skolem--Noether theorem, there exists $\beta \in \GL_2(F)$ such that $\gamma' = \beta^{-1}\gamma \beta$.  If $\gamma$ has rotation factor $\zeta$, then $\gamma'$ has rotation factor 
\begin{equation} \label{eqn:zetasign}
\zeta' = \zeta^{\sgn(\det(\beta))};
\end{equation}
in particular, $\zeta'=\zeta$ if and only if $\det(\beta)$ is totally positive.  

Connecting this up with the previous section, we need consider optimal embeddings but with attention to signs of the determinant---we will encounter a modified form of the selectivity phenomenon, so we follow the notation and conventions in Voight \cite[Chapter 31]{Voight}, some of which was already introduced in \cref{sec:elliptic_pts_setup}.  Let $K=F(\gamma) \supseteq F$ be a CM field.  Choose a fixed embedding $K \hookrightarrow \M_2(F)$ and identify $K$ with its image; this will serve as a reference point (like a base point of a fundamental group, see also below).  

\begin{definition}
An embedding $\phi_\beta \colon K \hookrightarrow \M_2(F)$ corresponding to conjugation by an element $\beta \in \GL_2(F)$ is \defi{oriented} if $\det(\beta) \in F_{>0}^\times$.
\end{definition}

Let $S \subset K$ be an $R$-order and let $\calO \subset \M_2(F)$ be an $R$-order.  An (optimal) embedding $\phi \colon S \hookrightarrow \calO$ determines an embedding $K \hookrightarrow \M_2(F)$, so we similarly define such an order to be \defi{oriented}.  Let $\Emb^+(S,\calO)$ be the set of oriented optimal embeddings $\phi \colon S \hookrightarrow \calO$ and let $\Emb^+(S,\calO;\calO_{>0}^{\times})$ be this set up to conjugation by~$\calO_{>0}^\times$.  Let 
\begin{equation} 
E^+ \colonequals \{\beta \in \GL_2^+(F) : \beta^{-1}K\beta \cap \calO = \beta^{-1} S \beta\}. 
\end{equation}
Then \cite[(30.3.13)]{Voight} the map which sends 
$\beta$ to conjugation by $\beta$ induces a bijection
\begin{equation} \label{eqn:Eplus}
K^\times \backslash E^+ / \calO_{>0}^{\times} \xrightarrow{\sim} \Emb^+(S,\calO;\calO_{>0}^{\times}).
\end{equation}
Let 
\begin{equation}
m^+(S,\calO;\calO_{>0}^{\times}) \colonequals \#\Emb^+(S,\calO;\calO_{>0}^{\times}).
\end{equation}

With this notation in hand, we can generalize the notion of selectivity to oriented optimal embeddings as follows.  Let $\Gen \calO$ be the genus of the order $\calO$, i.e.~the set of orders in $\M_2(F)$ which are locally isomorphic to~$\calO$.

\begin{definition}
We say that $\Gen \calO$ is \defi{orientedly genial} for $S$ if $\Emb^+(S,\calO') \neq \emptyset$ for all $\calO' \in \Gen \calO$; otherwise, we say that $\Gen \calO$ is \defi{orientedly optimally selective}.  
\end{definition} 

In other words, $\Gen \calO$ is orientedly genial for $S$ if and only if $S$ has an oriented embedding into every order $\calO'$ that is locally isomorphic to $\calO$.  In terms of rotation factors, if $\gamma$ has rotation factor $\zeta$ and $S \supseteq R[\gamma]$, then $\Gen \calO$ is orientedly genial for $S$ if and only if every order $\calO'$ locally isomorphic to $\calO$ admits an optimal embedding of $S$ with rotation factor $\zeta$.  

Denoting by $\Fhat \colonequals F \otimes \Zhat$ the finite adele ring of $F$, 
we attach \cite[(28.5.8)]{Voight}
\begin{equation}
GN^+(\calO) \colonequals F_{>0}^\times \det(N_{\GL_2(\Fhat)}(\calOhat)) \leq \Fhat^\times. 
\end{equation}
to the order $\calO$, as well as the class group 
\begin{equation} 
\Cl_{GN^+(\calO)} R \colonequals \Fhat^\times/GN^+(\calO). 
\end{equation}
The determinant map induces a bijection between $\Typ^+ \calO$, the similarly defined oriented type set of $\calO$, and $\Cl_{GN^+(\calO)} R$ \cite[Corollary 28.5.10]{Voight}.  By class field theory, attached to~$GN^+(\calO)$ is an abelian extension $H_{GN^+(\calO)} \supseteq F$.  

We define the \defi{orientedly optimal selectivity condition}:
\smallskip
\begin{enumerate}
\item[(OOS)] $K$ is a subfield of the class field $H_{GN^+(\calO)} \supseteq F$.
\end{enumerate}
\smallskip
We now restrict to the case at hand: we suppose that $\calO$ is an Eichler order of level~$\frakN$ \cite[\S 31.2]{Voight}.  Then there is a surjection $\Cl^+ R \to \Cl_{GN^+(\calO)} R$, presenting this class group as the quotient of the narrow class group by the primes $\frakp \mid \frakN$ with $\ord_\frakp(\frakN)$ odd. Thus (OOS) holds if and only if the following two conditions hold:
\begin{enumerate}
\item[(i)] $K$ is unramified at all nonarchimedean places $v \in \Pl F$; and 
\item[(ii)] If $\frakp \mid \frakN$ with $\ord_\frakp \frakN$ odd, then $\frakp$ splits in $K$.
\end{enumerate}

\begin{theorem}[Oriented optimal selectivity] \label{thm:oos}
Suppose that $\calO$ is an Eichler order.  Then the following statements hold.
\begin{enumalph}
\item $\Gen \calO$ is orientedly optimally selective for $S$ if and only if the orientedly optimal selectivity condition \textup{(OOS)} holds.
\item If $\Gen \calO$ is orientedly optimally selective for $S$, then $\Emb^+(S,\calO') \neq \emptyset$ for precisely \emph{half} of the types $[\calO'] \in \Typ^+ \calO$.  More precisely, if $[\calO'] \leftrightarrow [\frakb] \in \Cl^+ R$, then $\Emb^+(S,\calO') \neq \emptyset$ if and only if $\,\Frob_{\frakb} \in \Gal(K\,|\,F)$ is trivial.
\item In all cases, 
\begin{equation} m(S,\calO';\calO_{>0}^{\prime\times}) = m(S,\calO;\calO_{>0}^{\times}) \end{equation}
for all $\calO' \in \Gen \calO$ whenever both sides are nonzero.
\end{enumalph}
\end{theorem}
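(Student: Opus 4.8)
\emph{Strategy.} The plan is to transcribe the proof of the classical optimal selectivity theorem \cite[\S 31.2]{Voight} into the oriented setting, systematically replacing $\GL_2(F)$ by $\GL_2^+(F)$, the unit group $\calO^\times$ by $\calO^\times_{>0}$, the type set $\Typ\calO$ by $\Typ^+\calO$, and the class field attached to $\calO$ by $H_{GN^+(\calO)}$. I would lean on three inputs: the local--global principle for optimal embeddings together with the bijection \eqref{eqn:Eplus}; the positivity of every local embedding number $m_\frakp(S,\calO;\calO^\times)$, which holds because $B=\M_2(F)$ is split at every place (so $\Omega=\emptyset$) and a quadratic $R_\frakp$-order embeds optimally into every local Eichler order in $\M_2(F_\frakp)$; and the determinant bijection $\Typ^+\calO \iso \Cl_{GN^+(\calO)}R$ of \cite[Corollary 28.5.10]{Voight}. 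The arithmetic fact that makes ``orientation'' meaningful is that $K$ is CM, so $\Nm_{K/F}(K^\times)\subseteq F^\times_{>0}$; this is exactly why the pertinent class field $H_{GN^+(\calO)}$ is a \emph{narrow} one, and why $K$, being ramified at all archimedean places of $F$, can lie inside it.

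\emph{Parts (a) and (b).} Because every local embedding number is positive, the sole obstruction to $\Emb^+(S,\calO')\neq\emptyset$ is global. Fixing $\widehat\nu\in\GL_2(\Fhat)$ with $\calOhat' = \widehat\nu\,\calOhat\,\widehat\nu^{-1}$ and pushing the nonemptiness question through \eqref{eqn:Eplus} and the adelic description of $E^+$, one finds that $\Emb^+(S,\calO')\neq\emptyset$ precisely when the class $[\calO']\in\Typ^+\calO\iso\Cl_{GN^+(\calO)}R$ --- corresponding to $\det(\widehat\nu)$ --- lies in the image of the norm map $\Nm_{K/F}\colon \Cl_{GN^+(\calO)}\Z_K \to \Cl_{GN^+(\calO)}R$. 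By class field theory and $[K:F]=2$: if $K\not\subseteq H_{GN^+(\calO)}$, then $K\cap H_{GN^+(\calO)}=F$ and this norm map is surjective, so $\Gen\calO$ is orientedly genial for $S$; if $K\subseteq H_{GN^+(\calO)}$, then the image is the index-$2$ kernel of the Artin map $\Cl_{GN^+(\calO)}R \to \Gal(H_{GN^+(\calO)}|F)\to \Gal(K|F)$, so $\Gen\calO$ is orientedly optimally selective and exactly half the types are genial --- namely those $[\calO']\leftrightarrow[\frakb]$ with $\Frob_\frakb|_K$ trivial (which factors through $\Cl_{GN^+(\calO)}R$ by condition (ii)). This yields (a) and (b), once $K\subseteq H_{GN^+(\calO)}$ is identified with (OOS) --- its definition --- and (OOS) with (i)--(ii). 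For the latter I would use the stated description of $\Cl_{GN^+(\calO)}R$ as the quotient of $\Cl^+ R$ by the primes $\frakp\mid\frakN$ with $\ord_\frakp\frakN$ odd: then $H_{GN^+(\calO)}$ is the subfield of the narrow Hilbert class field of $F$ fixed by the Frobenius of those primes, so $K$ (CM, hence ramified at every archimedean place) lies in the narrow Hilbert class field iff $K/F$ is unramified at every finite place --- condition (i) --- and lies in $H_{GN^+(\calO)}$ iff moreover each odd-exponent prime of $\frakN$ splits in $K$ --- condition (ii).

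\emph{Part (c).} It remains to see that a nonzero value of $m^+(S,\calO';\calO^{\prime\times}_{>0})$ is independent of $\calO'\in\Gen\calO$. Orders in a common genus are locally isomorphic, so the local embedding numbers $m_\frakp(S,\calO';\calO^{\prime\times})$ --- and hence the adelic embedding number $m(\Shat,\calOhat';(\calOhat')^\times)=\prod_{\frakp\mid\frakN}m_\frakp(S,\calO';\calO^{\prime\times})$ --- do not depend on $\calO'$; moreover $\nrd(\calO^{\prime\times})=R^\times$ and $\nrd(\calO^{\prime\times}_{>0})=R^\times_{>0}$ for every Eichler order of level $\frakN$ in $\M_2(F)$, so the unit index $[\nrd(\calO^{\prime\times}):\nrd(\calO^{\prime\times}_{>0})\nrd(S^\times)]$ appearing in \cite[Lemma 30.3.14]{Voight} equals $[R^\times:R^\times_{>0}\nrd(S^\times)]$, which depends only on $S$. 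By the standard adelic counting argument (cf.\ \cite[\S\S 30.3--30.4]{Voight}), a nonzero value of $m^+(S,\calO';\calO^{\prime\times}_{>0})$ is the product of $h(S)/h(R)$, this adelic embedding number, and this unit index, hence is genus-invariant; this is (c). (The analogous statement for the \emph{non}-oriented count is immediate: by \eqref{eqn:mso01xx} and the absence of selectivity for the full group $\calO^\times$, it is genus-invariant and never zero.)

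\emph{Expected difficulty.} The hard part will be the oriented class-field-theory bookkeeping in (a)--(b): identifying the image of $\Nm_{K/F}$ at the level of the \emph{narrow} class group $\Cl_{GN^+(\calO)}R$ with the kernel of Frobenius in $\Gal(K|F)$, treating the archimedean places correctly (here the CM hypothesis and $\Nm_{K/F}(K^\times)\subseteq F^\times_{>0}$ are precisely what is needed), and verifying that the sign of $\det(\beta)$ --- propagated through \eqref{eqn:zetasign} via Skolem--Noether --- is unconstrained locally, so that the single global class in $\Cl_{GN^+(\calO)}R$ is the whole obstruction. Translating (OOS) into (i)--(ii) via the description of $\Cl_{GN^+(\calO)}R$ is then a brief class-field-theoretic computation, and the remainder is a faithful transcription of \cite[\S 31.2]{Voight}.
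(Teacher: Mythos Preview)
Your proposal is correct and takes exactly the same approach as the paper: the paper's proof consists of the single sentence that ``the proof given by Voight \cite[Main Theorem 31.1.7]{Voight} applies, \emph{mutatis mutandis} starting with \eqref{eqn:Eplus},'' with the refinement in (b) handled via \cite[Proposition 31.4.4, Corollary 31.4.6]{Voight}. You have simply spelled out what the \emph{mutatis mutandis} entails---the substitutions $\GL_2\rightsquigarrow\GL_2^+$, $\calO^\times\rightsquigarrow\calO^\times_{>0}$, $\Typ\rightsquigarrow\Typ^+$, and the passage to the narrow class field $H_{GN^+(\calO)}$---in considerably more detail than the paper itself does, and your identification of the CM hypothesis (forcing $\Nm_{K/F}(K^\times)\subseteq F^\times_{>0}$) as the reason the archimedean bookkeeping works out is exactly the point.

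One small overstatement to tighten: your claim that ``every local embedding number $m_\frakp(S,\calO;\calO^\times)$ is positive'' is not true unconditionally for Eichler orders (it can vanish depending on the splitting behaviour of $\frakp$ in $K$ and $\ord_\frakp\frakN$), but this does not affect the argument---when a local embedding number vanishes, $\Emb^+(S,\calO')=\emptyset$ for \emph{every} $\calO'\in\Gen\calO$, and the theorem holds vacuously. The substantive content is, as you say, the case where local embeddings exist and the obstruction is purely global.
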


\begin{proof}
Amazingly, the proof given by Voight \cite[Main Theorem 31.1.7]{Voight} applies, \emph{mutatis mutandis} starting with \eqref{eqn:Eplus}.  
The refinement in part (b) is also proven in the same way \cite[Proposition 31.4.4, Corollary 31.4.6]{Voight}.
\end{proof}

\begin{cor}
\!All rotation factors for $\gamma$ occur equally across the groups $\Gamma_0(\frakN)_\frakb$.  
\end{cor}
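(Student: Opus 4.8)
The plan is to show, for each sign vector $s \in \{\pm1\}^n$, that the total number of elliptic points whose stabilizer is generated by the image of a $\GL_2(F)$-conjugate of $\gamma$ and whose rotation factor is $\zeta^s$, summed over all the components $X_0(\frakN)_\frakb$ (equivalently over $[\frakb] \in \Cl^+ R$), is independent of $s$; since every rotation factor of such a point has the form $\zeta^s$, that is exactly the assertion. We may assume $q \ge 3$: for $q = 2$ the only rotation factor is $(-1,\dots,-1)$ and there is nothing to prove, whereas for $q \ge 3$ the coordinatewise sign action on $\zeta$ is free, so the rotation factors in question are precisely the $\zeta^s$, $s \in \{\pm1\}^n$.

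The mechanism will be an explicit bijection. First I would use surjectivity of $\sgn\colon F^\times \to \{\pm1\}^n$ (weak approximation) to fix $t_s \in F^\times$ with $\sgn(t_s) = s$, and set $\beta_s = \left(\begin{smallmatrix} t_s & 0 \\ 0 & 1 \end{smallmatrix}\right) \in \GL_2(F)$. A short matrix computation shows that conjugation by $\beta_s$ carries the order $\calO_0(\frakN)_\frakb$ onto $\calO_0(\frakN)_{t_s^{-1}\frakb}$, hence $\Gamma_0(\frakN)_\frakb$ onto $\Gamma_0(\frakN)_{t_s^{-1}\frakb}$ (the condition $\det\in R^\times_{>0}$ is conjugation-invariant), and the latter is identified with the chosen component $\Gamma_0(\frakN)_{\frakb'}$, $[\frakb'] = [t_s^{-1}\frakb]$, by an isomorphism of the form \eqref{eqn:g01bconj}, which has totally positive determinant and so preserves rotation factors. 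The subtlety is that $\beta_s$ does not preserve $\calH = \mathbf{H}^n$ unless $s = (1,\dots,1)$: it sends the $v$-th coordinate into the lower half-plane exactly when $s_v = -1$. Post-composing the $\beta_s$-action with complex conjugation in those coordinates produces a bijection $c_s\colon \calH \to \calH$, holomorphic where $s_v = 1$ and antiholomorphic where $s_v = -1$, which is equivariant in the sense $c_s(\gamma z) = (\beta_s \gamma \beta_s^{-1})(c_s z)$. (Here one uses that $\gamma$ and $\beta_s \gamma \beta_s^{-1}$ have totally positive determinant, so complex conjugation commutes with their coordinatewise fractional-linear action.)

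Granting this, $c_s$ descends to a bijection from the elliptic points of $X_0(\frakN)_\frakb$ onto those of $X_0(\frakN)_{\frakb'}$, carrying stabilizers to $\GL_2(F)$-conjugate stabilizers (so the CM order $R[\gamma]$ and its invariants are preserved). As $c_s$ reverses orientation exactly in the coordinates with $s_v = -1$, and a rotation factor transforms under local orientation reversal by $\zeta_v \mapsto \zeta_v^{-1}$, an elliptic point of rotation factor $\zeta^s$ is sent to one of rotation factor $(\zeta^s)^s = \zeta^{(1,\dots,1)}$. Writing $N_\eta(\frakb)$ for the number of elliptic points of $X_0(\frakN)_\frakb$ attached to $\gamma$ with rotation factor $\eta$, this yields $N_{\zeta^s}(\frakb) = N_\zeta(\frakb')$ with $[\frakb'] = [t_s^{-1}\frakb]$; since $[\frakb] \mapsto [t_s^{-1}\frakb]$ permutes $\Cl^+ R$, summing gives $\sum_{[\frakb]} N_{\zeta^s}(\frakb) = \sum_{[\frakb]} N_\zeta(\frakb)$ for every $s$, as required. (Alternatively, one can route this bookkeeping through \Cref{thm:oos}: in the orientedly genial case every component admits optimal embeddings of $R[\gamma]$ of every rotation factor, with the orientation-insensitive local count; in the orientedly selective case, part~(b) shows that for each fixed rotation factor the components admitting it form a coset in $\Cl^+ R$ of the kernel of an Artin map to $\Gal(K\,|\,F)$, always of size $\tfrac12 \#\Cl^+ R$, on each of which the count agrees by part~(c); in either case the sum over components is independent of the rotation factor.)

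The main obstacle I anticipate is purely bookkeeping: keeping the two effects of conjugation by $\beta_s$ in lockstep — the shift of the module data, hence of the component from $\frakb$ to $t_s^{-1}\frakb$, and the orientation reversal on $\mathbf{H}^n$, hence the coordinatewise inversion of rotation factors — and checking that $c_s$ is genuinely $\Gamma$-equivariant, so that it induces a clean bijection of elliptic points with no point created, lost, or counted with the wrong multiplicity.
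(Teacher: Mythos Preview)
Your argument is correct and takes a genuinely different route from the paper. The paper's proof is a single line: it asserts that condition~(i) of (OOS) fails because $K=F(\gamma)$ is CM and hence ramified at a nonarchimedean place, so by \Cref{thm:oos}(a) the genus is orientedly genial and the conclusion follows. You instead build, for each sign vector~$s$, an explicit equivariant bijection $c_s$---conjugation by your $\beta_s$ followed by complex conjugation in the coordinates with $s_v=-1$---that carries elliptic points of rotation factor~$\zeta^s$ on component~$\frakb$ to elliptic points of rotation factor~$\zeta$ on component~$t_s^{-1}\frakb$; summing over $\Cl^+ R$ gives the equality of totals. Your approach is more elementary, avoiding the selectivity machinery entirely, and is in fact more robust: the paper's claim that a CM extension $K/F$ must be ramified at a \emph{finite} place is not true in general (for instance $F=\Q(\sqrt{3})$, $K=\Q(\zeta_{12})$, which is exactly the field arising for order-$6$ elliptic points there), so (OOS) can in fact hold. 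Your parenthetical alternative via \Cref{thm:oos}, which treats the genial and selective cases separately and appeals to parts (b) and (c) in the latter, is what a correct argument through that theorem actually requires.
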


\begin{proof}
Apply \Cref{thm:oos}: condition (i) above fails since $K=F(\gamma) \supseteq F$ is CM so ramified at a nonarchimedean place $v \in \Pl F$.
\end{proof}

We now return to the issue of the choice of reference embedding $K \hookrightarrow \M_2(F)$.  

\begin{lemma} \label{lem: OOS conductor}
Assume that $\Gen \calO$ is orientedly optimally selective for $S$.  
Let $S \subseteq S_0$ be a suborder of conductor $\frakf = \frakf(S_0 | S)$, and assume that $\Emb^{+}(S_0,\calO) \ne \emptyset$. Then
$\Emb^{+}(S,\calO)$ is nonempty if and only if $\Frob_{\frakf} \in \Gal(K\,|\, F)$ is trivial.
\end{lemma}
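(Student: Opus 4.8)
The plan is to make explicit the dependence on the order that is hidden in the identification of \Cref{thm:oos}(b). Since $\Emb^+(S_0,\calO)\neq\emptyset$, oriented optimal embeddings of $(S_0)_\frakp$ --- and a fortiori of its suborder $S_\frakp$ --- into $\calO_\frakp$ exist at every finite prime $\frakp$. So to each order $S'$ with $S\subseteq S'\subseteq S_0$ I attach a class $\delta(S')\in\Gal(K\,|\,F)$ such that $\Emb^+(S',\calO)\neq\emptyset$ if and only if $\delta(S')=1$, and I then show that passing to a suborder of conductor $\frakf$ multiplies $\delta$ by $\Frob_{\frakf}$. (The class $\delta(S')$ depends on the fixed reference embedding $K\hookrightarrow\M_2(F)$, but the comparison $\delta(S)\,\delta(S_0)$ below does not, which is why $\Frob_\frakf$ yields an $S$-independent statement.) Granting this, the hypothesis $\Emb^+(S_0,\calO)\neq\emptyset$ reads $\delta(S_0)=1$, so $\delta(S)=\Frob_{\frakf}$ and $\Emb^+(S,\calO)\neq\emptyset$ precisely when $\Frob_{\frakf}=1$.

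To construct $\delta$, choose $\widehat\beta\in\GL_2(\Fhat)$ realizing an adelic oriented optimal embedding of $\widehat{S'}$ into $\calOhat$ (one exists by the previous paragraph, together with Skolem--Noether to match it locally with the reference embedding), and let $\delta(S')\in\Gal(K\,|\,F)$ be the image of $\det(\widehat\beta)$ under the idelic Artin map of $K/F$. This is well-defined: $\det(\widehat\beta)$ is determined up to left multiplication by $\Nm_{\widehat{K}/\Fhat}(\widehat{K}^\times)$ (norms, hence in the kernel of the Artin map), up to right multiplication by $\det(N_{\GL_2(\Fhat)}(\calOhat))\subseteq GN^+(\calO)$ (which lies in the kernel of the Artin map of $K/F$ because \textup{(OOS)} says $K\subseteq H_{GN^+(\calO)}$), and up to the local ambiguity coming from the possibly several $\calO_\frakp^\times$-conjugacy classes of optimal embeddings of $S'_\frakp$, which all have determinant in a single coset of $\Nm_{K_\frakp/F_\frakp}(K_\frakp^\times)$ since $\M_2(F_\frakp)$ is split. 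The approximation argument underlying \eqref{eqn:Eplus}, applied exactly as in the proof of \Cref{thm:oos} (via \cite[Main Theorem 31.1.7]{Voight} \emph{mutatis mutandis}), then shows that $\Emb^+(S',\calO)\neq\emptyset$ if and only if $\delta(S')=1$; this is \Cref{thm:oos}(b), with the dependence on $S'$ now recorded in $\delta$.

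For the shift formula $\delta(S)=\delta(S_0)\,\Frob_{\frakf}$, pick $\widehat\beta_0\in\GL_2(\Fhat)$ computing $\delta(S_0)$. At primes $\frakp\nmid\frakf$ one has $S_\frakp=(S_0)_\frakp$, so $\widehat\beta_{0,\frakp}$ already optimally embeds $S_\frakp$ into $\calO_\frakp$; at each $\frakp\mid\frakf$, replace $\widehat\beta_{0,\frakp}$ by $\widehat\beta_{0,\frakp}\,g_\frakp$ for a suitable $g_\frakp\in\GL_2(F_\frakp)$ re-optimalizing the embedding for $S_\frakp$. The resulting $\widehat\beta\in\GL_2(\Fhat)$ realizes an oriented optimal embedding of $\widehat{S}$, hence computes $\delta(S)$, and $\det(\widehat\beta_0)^{-1}\det(\widehat\beta)\in\Fhat^\times$ is the idele with component $\det(g_\frakp)$ at $\frakp\mid\frakf$ and trivial component elsewhere. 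It therefore remains to compute $v_\frakp(\det g_\frakp)$ for each $\frakp\mid\frakf$. If $\frakp$ splits in $K$ there is nothing to prove, since then $\Nm_{K_\frakp/F_\frakp}(K_\frakp^\times)=F_\frakp^\times$ and $\Frob_\frakp=1$. If $\frakp$ is inert --- the only remaining possibility, as $K/F$ is unramified at all finite primes by condition (i) of \textup{(OOS)} --- then a direct computation with the Eichler order $\calO_\frakp$ and the inclusion $S_\frakp\subseteq(S_0)_\frakp$ of index $\frakp^{v_\frakp(\frakf)}$ shows $v_\frakp(\det g_\frakp)=v_\frakp(\frakf)$. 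Hence $\det(\widehat\beta_0)^{-1}\det(\widehat\beta)$ maps under the Artin map to $\prod_{\frakp\mid\frakf}\Frob_\frakp^{v_\frakp(\frakf)}=\Frob_{\frakf}$, giving $\delta(S)=\delta(S_0)\,\Frob_{\frakf}$; combined with $\delta(S_0)=1$ this is the claim.

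The main obstacle is the inert-prime local computation: pinning down the valuation of the determinant of the element that re-optimalizes a local embedding into an Eichler order when the source order shrinks by index $\frakp^{v_\frakp(\frakf)}$, and matching it with $\Frob_\frakp^{v_\frakp(\frakf)}$ --- equivalently, showing that in $\M_2(F_\frakp)$ at an inert prime the orders into which a fixed embedding of $K_\frakp$ is optimal for the conductor-$\frakp^{c}$ suborder are obtained from those optimal for the maximal order by conjugating by an element whose determinant has valuation $c$. Everything else is either bookkeeping or an appeal to the selectivity machinery already invoked for \Cref{thm:oos}.
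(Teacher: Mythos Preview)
Your approach is essentially the same as the paper's: both pass to an adelic conjugator $\widehat\beta$, observe that its determinant encodes the conductor $\frakf$, and then invoke the selectivity machinery of \Cref{thm:oos}(b) (i.e., the Artin map on $\det\widehat\beta$) to decide whether the adelic embedding globalizes. The paper's proof is extremely terse---it simply asserts $\det(\widehat\beta)=\widehat f$ with $\widehat f\,\widehat R\cap F=\frakf$ and appeals to the proof of \Cref{thm:oos}(b)---whereas you unpack the same argument by introducing the obstruction class $\delta(S')$, checking its well-definedness, and isolating the shift $\delta(S)=\delta(S_0)\Frob_\frakf$. The local inert-prime valuation computation you flag as the ``main obstacle'' is exactly the content of the paper's unjustified assertion $\det(\widehat\beta)=\widehat f$, so neither version spells it out; it is a standard Eichler-order calculation.
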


\begin{proof}
Let $\widehat{S_0} \colonequals S_0 \otimes_{\Z} \Zhat$, and let $\widehat{\phi} \colon \widehat{S_0} \hookrightarrow \widehat{\calO}$ be a local orientedly optimal embedding, corresponding to $\widehat{\beta} \in \GL_2(\widehat{F})$ with the property that $\widehat{\beta}^{-1} \widehat{K} \widehat{\beta} \cap \widehat{\calO} = \widehat{\phi}(\widehat{S_0})$.  Then $\det(\widehat{\beta})=\widehat{f}$ where $\widehat{f} \widehat{R} \cap F = \mathfrak{f}$.  Exactly as in the proof of \Cref{thm:oos}(b), $\widehat{\beta}$ yields an orientedly optimal embedding $\phi \colon S_0 \hookrightarrow \mathcal{O}$ if and only if $\widehat{f} \in F_{>0}^\times \det(\widehat{K}^\times)$ if and only if $\Frob_{\frakf} \in \Gal(K\,|\,F)$ is trivial.  
\end{proof}

\begin{cor}
Let $K$ be a CM field, let $S \subset K$ be an $R$-order, and let $\gamma \in S^\times \smallsetminus R^\times$ be such that $\gamma R^\times \in S^\times/R^\times$ has finite order and generates $(S^\times/R^\times)_{\textup{tors}}$.  Let $f_\gamma(x)=x^2-t_\gamma x+u_\gamma \in R[x]$ be the minimal polynomial of $\gamma$.  Suppose that the condition (OOS) holds.  Then the following statements hold.
\begin{enumalph}
\item The order $R[\gamma]$ embeds orientedly optimally into
\begin{equation}\mathcal{O}_0(\frakN)_{(1)}=\begin{pmatrix} R & R \\ \frakN & R \end{pmatrix}
\end{equation}
if and only if $\widehat{R}[\gamma] \hookrightarrow \widehat{\calO}$ embeds optimally 
if and only if there exists $x \in R$ such that $f_\gamma(x) \equiv 0 \pmod{\frakN}$, via the embedding
\begin{equation} \label{eqn:phix}
\phi(x)=\begin{pmatrix} x & 1 \\ -f_\gamma(x) & t_\gamma-x \end{pmatrix}.
\end{equation}
\item Let $x \in R$ be such that $f_\gamma(x) \equiv 0 \pmod{\frakN}$, and let $\zeta$ be the rotation type for 
\eqref{eqn:phix}.  Let $\frakf = \frakf(S\, |\, R[\gamma] )$ be the conductor of $R[\gamma] \subseteq S$, so $\disc R[\gamma] = \frakf^2 \disc S$.  Then the rotation factors $\zeta'=(\zeta_v^{\eps_v})_v$ which occur for fixed points of optimal embeddings of $S$ (with stabilizer of order $q$) into $\calO_0(\frakN)_{\frakb}$ are exactly those
with 
\begin{equation} \prod_v \eps_v = \displaystyle{\legen{K}{\frakf\frakb}}, \end{equation}
where $\displaystyle{\legen{K}{\frakf\frakb}} \in \{\pm 1\}$ is trivial if and only if $\Frob_{\frakf\frakb} \in \Gal(K\,|\,F)$ is trivial.
\end{enumalph}
\end{cor}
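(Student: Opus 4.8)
The plan is to prove (a) from the explicit shape of $\phi(x)$ together with the local theory of optimal embeddings into Eichler orders, and to prove (b) by fixing $\phi(x)$ as the reference embedding $K\hookrightarrow\M_2(F)$, transporting an arbitrary optimal embedding of $S$ into $\calO_0(\frakN)_\frakb$ back to $\phi(x)$ via Skolem--Noether, reading off its rotation factor from \eqref{eqn:zetasign}, and computing the archimedean signs of the transporting matrix by a reciprocity argument that stacks \Cref{thm:oos}(b) (dependence on the component $\frakb$) on top of \Cref{lem: OOS conductor} (the conductor gap between $R[\gamma]$ and $S$).

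For part (a): one checks that $\phi(x)(\gamma)$ has trace $t_\gamma$ and determinant $u_\gamma$, so $\phi(x)$ is an $R$-algebra map landing in $\calO_0(\frakN)_{(1)}$ exactly when $f_\gamma(x)\in\frakN$, and that it is then automatically optimal: the $(1,2)$-entry of $\phi(x)(\gamma)$ is the scalar $1$, so any $\alpha I+\beta\,\phi(x)(\gamma)\in\calO_0(\frakN)_{(1)}$ forces $\beta\in R$, hence $\alpha\in R$, whence $\phi(x)(K)\cap\calO_0(\frakN)_{(1)}=\phi(x)(R[\gamma])$. Taking $\phi(x)$ (extended $F$-linearly) as the reference embedding, this gives ``$\exists x$'' $\Rightarrow$ ``$R[\gamma]$ orientedly optimally embeds into $\calO_0(\frakN)_{(1)}$'' $\Rightarrow$ ``$\widehat R[\gamma]\hookrightarrow\calOhat$ embeds optimally'' (tensor with $\Zhat$, using $\calO_0(\frakN)_{(1)}\in\Gen\calO$). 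For the return implication, factor $\frakN=\prod_\frakp\frakp^{e_\frakp}$; by the local analysis of \cite[\S30.6]{Voight} every local optimal embedding of $R_\frakp[\gamma]$ into the level-$\frakp^{e_\frakp}$ Eichler order is conjugate to $\phi(x_\frakp)$ with $f_\gamma(x_\frakp)\equiv0\psmod{\frakp^{e_\frakp}}$, and the Sun Zi theorem assembles the $x_\frakp$ into $x\in R$. (The step ``local $\Rightarrow$ global oriented'' also follows from \Cref{thm:oos}(b), as $\calO_0(\frakN)_{(1)}$ has trivial type, but the direct argument avoids using (OOS).)

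For part (b), let $\psi\colon S\hookrightarrow\calO_0(\frakN)_\frakb$ be an optimal embedding. Since $\det\psi(\gamma)=\Nm_{K\mid F}(\gamma)=u_\gamma$ is a totally positive unit and $\psi(\gamma)$ has order $q$ modulo scalars, $\overline{\psi(\gamma)}$ generates $\Stab_{\rmP\!\Gamma_0(\frakN)_\frakb}(z)$ at the fixed point $z$ of $\psi$ (optimality rules out a larger stabilizer: any matrix fixing $z$ centralizes $\psi(\gamma)$, hence lies in $\psi(K)\cap\calO_0(\frakN)_\frakb=\psi(S)$). Writing $\psi=\beta^{-1}\phi(x)(\,\cdot\,)\beta$ by Skolem--Noether, \eqref{eqn:zetasign} identifies the rotation factor of $z$ with $(\zeta_v^{\,\sgn_v\det\beta})_v$, so the occurring rotation factors are exactly $\{(\zeta_v^{\,\sgn_v\det\beta})_v:\beta\in\mathcal B\}$, where $\mathcal B=\{\beta\in\GL_2(F):\phi(x)(K)\cap\beta\,\calO_0(\frakN)_\frakb\,\beta^{-1}=\phi(x)(S)\}$. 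This is a purely local condition, and $\mathcal B$ is a union of double cosets modulo $\phi(x)(K^\times)$ on the left and $N_{\GL_2(F)}(\calO_0(\frakN)_\frakb)$ on the right, on which $(\sgn_v\det(\,\cdot\,))_v$ is well defined ($\Nm_{K\mid F}$ is totally positive as $K$ is CM, and the normalizer determinants are absorbed by the right quotient). It thus remains to compute the image of $\mathcal B$ under $(\sgn_v\det(\,\cdot\,))_v$, which I would do adelically, following the proofs of \Cref{thm:oos}(b) and \Cref{lem: OOS conductor} while retaining the archimedean signs: at $\frakp\nmid\frakf\frakb$ the reference $\phi(x)(R[\gamma])_\frakp=\phi(x)(S)_\frakp$ is already optimal in $(\calO_0(\frakN)_\frakb)_\frakp\cong(\calO_0(\frakN)_{(1)})_\frakp$, so $\beta_\frakp\in K_\frakp^\times\,N((\calO_0(\frakN)_{(1)})_\frakp)$ and contributes nothing to $\det$ modulo norms and normalizer determinants; at $\frakp\mid\frakb$ one also untwists $(\calO_0(\frakN)_\frakb)_\frakp$ to $(\calO_0(\frakN)_{(1)})_\frakp$, forcing $\det\beta_\frakp$ to represent $\frakb_\frakp$; and at $\frakp\mid\frakf$ one moves the Eichler order so that the optimal order grows from $\phi(x)(R[\gamma])_\frakp$ to $\phi(x)(S)_\frakp$, which, exactly as in \Cref{lem: OOS conductor}, forces $\det\beta_\frakp$ to represent $\frakp^{\ord_\frakp\frakf}$.

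Collecting these local contributions, $\det\widehat\beta$ represents the ideal $\frakf\frakb$ modulo $\Nm_{K\mid F}(\widehat K^\times)$ and $\det N_{\GL_2(\Fhat)}(\calOhat)$; a global $\beta\in\GL_2(F)$ completing a given $\widehat\beta$ with prescribed real signs $(\eps_v)_v$ then exists if and only if the associated idele class is trivial in $\A_F^\times/(F^\times\,\Nm_{K\mid F}\A_K^\times)\cong\Gal(K\mid F)$. Here one uses that $\M_2(F)$ is split at every real place, so that by Eichler's theorem on $\SL_2$ the archimedean components may be prescribed freely subject only to this single reciprocity constraint, and that $\det N_{\GL_2(\Fhat)}(\calOhat)\subseteq\Nm_{K\mid F}(\widehat K^\times)$ -- which is condition (ii) of (OOS) applied to the Atkin--Lehner determinants $\varpi_\frakp^{\ord_\frakp\frakN}$. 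Evaluating the quadratic character $\chi_{K\mid F}$ on the idele whose finite part is $\frakf\frakb$ and whose infinite part is $(\eps_v)_v$ gives $\legen{K}{\frakf\frakb}\cdot\prod_v\eps_v$ (using condition (i) of (OOS), that $K/F$ is unramified at all finite places, so $\legen{K}{\frakf\frakb}$ is defined and equals $\chi_{K\mid F}(\frakf\frakb)$), so the constraint is precisely $\prod_v\eps_v=\legen{K}{\frakf\frakb}$; and $\legen{K}{\frakf\frakb}$ is trivial iff $\Frob_{\frakf\frakb}|_K$ is trivial by Artin reciprocity. The main obstacle is exactly this globalization: one must carefully track the local determinant classes at the primes dividing $\frakf$ and at those dividing $\frakb$, and prove both directions -- that every realized sign pattern satisfies $\prod_v\eps_v=\legen{K}{\frakf\frakb}$ (the product formula) and that every pattern satisfying it is realized (the strong-approximation input) -- which amounts to re-running Voight's proof of optimal selectivity \cite[Main Theorem 31.1.7]{Voight} while carrying along the archimedean signs and simultaneously the conductor jump of \Cref{lem: OOS conductor}, and checking that these two refinements combine cleanly (reproducing, in particular, the expected count $2^{n-1}$ of occurring rotation factors as a coset of $\ker(\prod\colon\{\pm1\}^n\to\{\pm1\})$). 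If $S$ admits no local optimal embedding into $\calOhat$ then $\mathcal B=\emptyset$ and the statement is vacuous; otherwise it holds as stated.
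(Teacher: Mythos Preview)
Your proposal is correct and follows essentially the same approach as the paper. For (a) you fix $\phi(x)$ as the reference embedding and invoke Voight \cite[\S30.6]{Voight} for the local normalized embeddings, exactly as the paper does (though you add the helpful explicit verification of optimality via the $(1,2)$-entry). For (b) you combine \Cref{thm:oos}(b) and \Cref{lem: OOS conductor} with \eqref{eqn:zetasign}, just as the paper does; the only cosmetic difference is that the paper packages the final step as ``the occurring $\eps$ form a coset of the kernel of $\{\pm1\}^n \to \Cl^+ R/\Cl R \to \Gal(K\,|\,F)$'', whereas you unwind this as an idelic reciprocity computation culminating in the product formula $\prod_v\eps_v=\legen{K}{\frakf\frakb}$---but under the Artin map these are the same statement (each archimedean sign $-1$ maps to the nontrivial Frobenius because $K/F$ is CM, so the composite is precisely $\prod_v$).
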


\begin{proof}
For (a), we choose the given rational canonical form $\phi \colon F(\gamma) \to M_2(F)$ as our reference point.  (For more on the local statement and normalized embeddings, we refer to Voight \cite[\S 30.6]{Voight}.)

For (b), starting with the reference point (a), we combine \Cref{thm:oos}(b) and \Cref{lem: OOS conductor} with the relationship to rotation factors given in \eqref{eqn:zetasign}: $S$ embeds orientedly optimally into $\calO_0(\frakN)_{\frakb}$ if and only if $\Frob_{\frakf\frakb}$ is trivial in $\Gal(K\,|\,F)$.  Considering now all possible orientations, we obtain exactly those with $\eps = (\eps_v)_v$ in the kernel of the composition of group homomorphisms
\begin{equation} 
\{\pm 1\}^n \to \frac{\Cl^+ R}{\Cl R} \to \Gal(K\,|\,F)
\end{equation}
giving the reformulation in (b).
\end{proof}

\begin{remark}
The results above generalize fully to the setup considered in Voight \cite[Chapter 31]{Voight}, allowing a quaternion algebra $B$ in place of $\M_2(F)$ over a global field $F$.
\end{remark}

\subsection{Resolution of singularities}

Now suppose $n=2$. Then van der Geer \cite[\S II.6]{vdG} explains how to resolve the singularities at the elliptic points, as cyclic quotient singularities.  The resolution of an elliptic point of order $q$ and rotation factor $(\zeta_q, \zeta_q^r)$ is given by constructing a (finite) Hirzebruch--Jung continued fraction expansion $[[b_1,\ldots b_d]]$ for $q/r$, as in \cref{sec:hjcont}.
Then there are $d$ curves $C_1, \ldots, C_d$ in the resolution chain, with self intersection numbers $C_1^2 = -b_1, \ldots, C_d^2 = -b_d$.
In addition $C_i \cdot C_j = \delta_{|i-j|,1}$ for $i \neq j$.
The local Chern class of this chain is
\begin{equation}
\sum_{i=1}^d (x_i + y_i - 1) C_i,
\end{equation}
the points $P_i = (x_i, y_i)$ being determined by the relations $P_0 = (1,0)$, $P_1 = \bigl( \frac{r}{q}, \frac{1}{q} \bigr)$ and $P_{i+1} = b_i P_i - P_{i-1}$. 

\section{Dimension formulas} \label{sec:dims}

Another basic invariant of a Hilbert modular surface is the dimensions of spaces of cusp forms attached to it. 

\subsection{Hilbert series}

The graded $M(\Gamma_0(\frakN))$-module $S(\Gamma_0(\frakN))$ 
has a Hilbert series, defined by
\begin{equation}
\label{eq:hilbert-series}
 \Hilb(S(\Gamma_0(\frakN))) \colonequals \sum_{k \in 2\Z_{\geq 0}} \dim S_{k} (\Gamma_0(\frakN))\, T^{k}\in \Z[[T]].
\end{equation}
This series gives insight into the algebraic structure of the
ring of Hilbert modular forms for~$\Gamma_0(\frakN)$.
We remind the reader that this Hilbert series sums information from all connected components (\cref{sub:hmf-def}).

There are at least two methods to compute the Hilbert series.  First, we can use the Riemann--Roch theorem \cite[Theorem 4.4]{vdG}, which needs the arithmetic invariants (arithmetic genus, cuspidal resolution, counts of elliptic points with their rotation types) computed in the next few sections.  A second independent way uses the trace
formula: see Takase \cite[Theorem 2]{Takase}, Shimizu \cite[Proposition 3.3]{Shimizu2}, 
Saito \cite[Theorem 2.1]{Saito}, or work of Okada \cite[Theorem 2.1]{Okada}.

Since it can be used as an independent check of our computation of geometric invariants, we also briefly report here on the second approach.  (More details will appear in forthcoming work of Breen--Voight \cite{BreenVoight}.)
We denote by~$\zeta_F$ the Dedekind zeta function of~$F$, and by~$\Nm$ the absolute norm map.  We consider pairs $(u,t)$
where $u$ ranges over $R^\times_{>0} / R^{\times 2}$, and for a fixed $u$, then $t \in R$ ranges over elements such that $t^2 - 4u$ is totally negative.
For a given pair $(u,t)$, we define the order $S(u,t) = R[x]/(x^2 - t x + u)$, and let $\frakf(u,t)$ be its conductor, which is an ideal of~$R$. 
Throughout, let $S \supseteq S(u,t)$ denote a superorder of $S(u,t)$ with class number~$h(S)$.  The trace formula (for the identity) gives the following expression:
\begin{equation} 
\label{HilbertSeriesEqn}
\begin{aligned}
\Hilb(S(\Gamma_0(\frakN)))
&= 
A \cdot T^2 
+ 
B \cdot T\Bigl(T\frac{d}{dT}\Bigr)^{n} \Bigl(\frac{T}{1-T^2}\Bigr)\\
&\quad +
\sum_{(u,t)}  C(u,t) \sum_{m\geq 1} \Nm(D_{2m - 2}(u,t)) T^{2m},
\end{aligned}
\end{equation}
where
\begin{equation}
\begin{aligned}
A & \colonequals 
(-1)^{n-1} \cdot h^+(R),
\\
B & \colonequals  \frac{1}{2^{n-1}} \cdot |\zeta_F(-1)|  \cdot h(R) \cdot \Nm(\frakN) \prod_{\frakp \mid \frakN} ( 1+ {\Nm(\frakp) }^{-1} ),
\\
C(u,t) & \colonequals \frac{1}{2} \sum_{S  \supseteq  S(u,t)}  \frac{ h(S) }{[ S^\times : R^\times] } m(\Shat,\calOhat;\calOhat^\times), 
\\
\sum_{k \geq 0} D_k(u,t) T^k & \colonequals \frac{1}{1 - t T + uT^2}.
\end{aligned}
\end{equation}
The adelic embedding numbers $m(\Shat,\calOhat;\calOhat^\times)$ were defined in \cref{sec:elliptic_pts_setup}. 

The term $C(u,t)$ can be further described as follows.  Let $K = F(x)/(x^2 - t x + u)$ be the CM extension of~$F$ containing $S(u,t)$. Denote its ring of integers by~$\Z_K$, its unit group by~$\Z_K^\times$, and its class number by $h(\Z_K)$. 
Then 
    \begin{align*} C(u,t) 
        & = \frac{h(\Z_K)}{2 [\Z_K^\times : R^\times]}  \sum_{\frakg \mid \frakf(u,t)}   \Nm(\frakg) \prod_{\frakp \mid \frakg} \left ( 1 - \left ( \frac{K}{\frakp} \right ) {\Nm(\frakp) }^{-1} \right )   \prod_{\frakp \mid \frakN} m_{\frakp}(S_\frakg,\calO;\calO^\times) .
    \end{align*}
where $S_\frakg$ is an order with conductor $\frakg$. 

\subsection{Explicit rational function}

We delve further to give an explicit  
expression for the Hilbert series as a rational function in $T$. For each $(u,t)$, let $\alpha(u,t)$ and $\beta(u,t)$ be the roots of the polynomial $T^2 - t T + u$. Since the discriminant is totally negative (in particular nonzero), we can write
\begin{equation}
    \frac{1}{1 - t T + uT^2} = \frac{1}{\alpha(u,t) - \beta(u,t)} \left(\frac{\alpha(u,t)}{1-\alpha(u,t)T} - \frac{\beta(u,t)}{1-\beta(u,t)T} \right).
\end{equation}
Thus, for every~$k\geq 0$,
\begin{equation}
    D_k(u,t) = \frac{1}{\alpha(u,t) - \beta(u,t)} (\alpha(u,t)^{k+1} - \beta(u,t)^{k+1}).
\end{equation}
Denote by $L$ a Galois closure for $F/\bbQ$. Note that $L$ is totally real, so the extension $L(\alpha(u,t))/L$ is of degree $2$. The characteristic polynomial of multiplication-by-$\alpha(u,t)$ on the \'etale $\bbQ$-algebra $F(\alpha(u,t))$ of degree $2n$ factors over $L$ as
\begin{equation}
    \prod_{i=1}^n (T^2 - t_i T + u_i) \in \bbQ[T]. 
\end{equation}
The roots of this polynomial can be organized into pairs $\{\alpha_i(u,t), \beta_i(u,t)\}_{i=1, \ldots, n}$, possibly with repetition. 
By examining partial fraction decompositions, we obtain
\begin{equation}
    \sum_{k \geq 0} \Nm (D_k(u,t)) T^k = \frac{1}{\Nm(\alpha(u,t) - \beta(u,t))} \sum_{\theta \in \Theta} \eps(\theta)\frac{\theta}{1 - \theta T}
\end{equation}
where 
\begin{equation}
    \Theta \colonequals \biggl\{\prod_{\gamma \in S} \gamma: S \subseteq \prod_{i=1}^n \{\alpha_i(u,t), \beta_i(u,t) \} \biggr\}
\end{equation}
and $\eps(\theta)$ is $1$ (resp.~$-1$) if $\theta$ contains an even (resp.~odd) number of $\beta_i(u,t)$. Thus,
\begin{equation} \label{normrational}
    \sum_{m\geq 1} \Nm(D_{2m-2}(u,t)) T^{2m} = T^2 \frac{1}{\Nm(\alpha(u,t) - \beta(u,t))} \sum_{\theta \in \Theta} \eps(\theta) \frac{\theta^2}{1 - \theta^2 T^2}.
\end{equation}
The other terms of the Hilbert series~\eqref{HilbertSeriesEqn} are plainly rational. We therefore have an algorithmic way of computing~$\Hilb(S(\Gamma_0(\frakN)))$ as a rational function, and hence to compute $\dim S_k(\Gamma_0(\frakN))$ for any even~$k$. 



%

\section{Geometric invariants} \label{sec:geom_invs}

In this section, we compute geometric invariants for Hilbert modular surfaces~$X(\Gamma)$ as defined in \cref{sec:hilb_varieties}.

\subsection{Invariants}

Let $X$ be a smooth connected algebraic surface over $\C$.  
Regarding $X$ as a closed oriented real $4$-manifold, it has \defi{Betti numbers} $b_i \colonequals \rk H_i(X,\Z)$ satisfying $b_0 = b_4 = 1$, $b_1 = b_3$ and an \defi{Euler number} 
\begin{equation}
e \colonequals \sum_{i=0}^4 (-1)^i b_i.  
\end{equation}
As a complex K\"ahler manifold, $X$ admits \defi{Hodge numbers} $h^{p,q} \colonequals \dim_{\C} H^q(X, \Omega^p)$, where $\Omega^p$ denotes the sheaf of holomorphic $p$-forms, which satisfy $h^{p,q} = h^{q,p}$. The \defi{geometric genus} of $X$ is $p_g \colonequals h^{0,2}$ and the \defi{irregularity} is $q \colonequals h^{0,1}$. The \defi{holomorphic Euler characteristic} of $X$ is $\chi \colonequals h^{0,0} - h^{0,1} + h^{0,2}$, and the \defi{arithmetic genus} is $p_a \colonequals \chi-1$.

\subsection{Volume and Chern numbers}

We continue to follow van der Geer~\cite[Chapter IV]{vdG}. To simplify notation, write $\Gamma(1) \colonequals \rmP\!\Gamma(1)_{(1)} = \PGL_2^+(R)$. For any discrete subgroup~$\Gamma$ of~$\PGL_2^+(\R)^2$ commensurable with~$\Gamma(1)$, one can define its index as
\begin{equation}
[\Gamma(1):\Gamma] = [\Gamma(1):\Gamma(1)\cap\Gamma]/[\Gamma:\Gamma(1)\cap\Gamma]\in\Q.
\end{equation}
In particular, for any fractional ideal~$\frakb$, we have~$[\Gamma(1):\rmP\!\Gamma(1)_{\frakb}] = 1$. Then the volume of the quotient~$\Gamma\backslash\calH$ is given by the following formula:
\begin{equation} \label{eqn:volume_formula}
\begin{aligned}
\vol(\Gamma \backslash \calH) &= 2 [\Gamma(1) : \Gamma] \zeta_F(-1) = -[\Gamma(1) : \Gamma] \frac{d_F^{3/2}}{\pi^{2n}} \zeta_F(2),
\end{aligned}
\end{equation}
where~$\zeta_F$ denotes the Dedekind zeta function of~$F$.
The significance of $\vol(\Gamma \backslash \calH)$ for the computation of invariants of $X(\Gamma)$ stems from Hirzebruch's proportionality principle~\cite[IV.2.1]{vdG}. The next step is to compute the Chern numbers~$c_1^2$ and~$c_2$ of~$X(\Gamma)$. In the following statement, we say that an elliptic point of~$X(\Gamma)$ is of \defi{type} $(n;a,b)$ if its rotation factor is~$(\zeta_n^a, \zeta_n^b)$ (cf.~\cref{sec:rotationfactors}).

\begin{thm}[{\cite[Theorem IV.2.5]{vdG}}]
Let $\Gamma \leq \GL_2^+(F)$ be a congruence subgroup.
    Then the Chern numbers~$c_1^2$, $c_2$ of $X(\Gamma)$ are as follows:
    \begin{align} \label{eqn:Chern1}
      c_1^2 &= 2 \vol(\Gamma \backslash \calH) + \sum_{\textup{$\sigma$ cusp}} \sum_{k=1}^r (2 - b_{\sigma,k}) + \sum a(\Gamma; n, a, b) c(n; a,b) \, ,\\
      \label{eqn:Chern2}
      c_2 &= \vol(\Gamma \backslash \calH) + \ell + \sum a(\Gamma; n, a, b) \Bigl( \ell(n; a, b) + \frac{n-1}{n} \Bigr)
    \end{align}
    where
    \begin{itemize}
        \item
            $b_{\sigma,1}, \ldots, b_{\sigma,r}$ are the self-intersection numbers of the resolution cycle above the cusp~$\sigma$,
        \item
            $a(\Gamma; n, a, b)$ is the number of quotient singularities of $\Gamma \backslash \calH$ of type $(n; a, b)$;
        \item
            $c(n; a, b)$ is the self-intersection number of the local Chern cycle of a quotient singular of type $(n; a, b)$;
        \item
            $\ell(n; a, b)$ is the number of curves in the resolution of a quotient singularity of type $(n; a, b)$; and finally
        \item
            $\ell$ is the sum of the number of curves occurring in the resolution of cusps.
        \end{itemize}
\end{thm}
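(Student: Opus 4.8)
The statement is \cite[Theorem IV.2.5]{vdG}, proved there in the $\PSL_2$ setting; since the formula depends only on the geometry of the orbifold $\rmP\!\Gamma\backslash\calH$ together with the cusp and elliptic data already assembled in \cref{sec:cusps,sec:elliptic_points}, the plan is to recall van der Geer's argument and check that it transfers verbatim to our $\GL_2^+(F)$ congruence subgroups. Both Chern numbers will be assembled from a contribution of the locus $Y^\circ$ where $\rmP\!\Gamma$ acts freely, governed by Hirzebruch's proportionality principle, plus purely local contributions at the finitely many cusps and at the finitely many elliptic (cyclic quotient) points.

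For $c_2$ I would use that $X(\Gamma)$ is a smooth projective surface, so $c_2 = e$ is its topological Euler number, which is additive over a stratification. Stratifying $\overline{Y}(\Gamma)$ into $Y^\circ$, the elliptic points, and the cusps, the desingularization $\pi$ is an isomorphism over $Y^\circ$, replaces each elliptic point of type $(n;a,b)$ by a Hirzebruch--Jung chain of $\ell(n;a,b)$ rational curves (Euler number $\ell(n;a,b)+1$), and replaces each cusp $\sigma$ by a cyclic configuration of $r_\sigma$ rational curves (Euler number $r_\sigma$, consistently including the nodal case $r_\sigma = 1$). By the orbifold Gauss--Bonnet theorem together with the volume computation \eqref{eqn:volume_formula} and Hirzebruch proportionality, the orbifold Euler characteristic of $\rmP\!\Gamma\backslash\calH$ equals $\vol(\Gamma\backslash\calH)$ in the present normalization; then $c_2 = e(X(\Gamma))$ is obtained from $\vol(\Gamma\backslash\calH)$ by replacing, at each elliptic point of order $n$, the orbifold weight $\tfrac1n$ of the point by the Euler number $\ell(n;a,b)+1$ of its resolution chain, and by adding $r_\sigma$ for the cycle resolving each cusp $\sigma$. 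Since $\ell = \sum_\sigma r_\sigma$ and $(\ell(n;a,b)+1) - \tfrac1n = \ell(n;a,b) + \tfrac{n-1}{n}$, this yields \eqref{eqn:Chern2}.

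For $c_1^2 = K_{X(\Gamma)}^2$, the plan is to express $K_{X(\Gamma)}$ as the rational pullback of the orbifold canonical class of $\overline{Y}(\Gamma)$ plus discrepancy cycles supported on the exceptional loci, and to compute $K_{X(\Gamma)}^2$ locally. On $Y^\circ$ the canonical class is the image of the $\rmP\!\Gamma$-invariant canonical form on $\calH$, whose self-intersection is again computed by Hirzebruch proportionality: on the compact dual $\PP^1\times\PP^1$ one has $c_1^2 = 2c_2$, so the free-locus contribution to $c_1^2$ is $2\vol(\Gamma\backslash\calH)$. Each cusp singularity is log canonical, the reduced resolution cycle $Z_\sigma = \sum_k S_{\sigma,k}$ appearing with discrepancy $-1$; using adjunction $K_{X(\Gamma)}\cdot S_{\sigma,k} = -2 - S_{\sigma,k}^2$ on each rational curve and the cyclic incidence pattern, the contribution to $c_1^2$ is $Z_\sigma^2 = \sum_k S_{\sigma,k}^2 + 2r_\sigma = \sum_{k=1}^{r_\sigma}(2 - b_{\sigma,k})$. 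Each elliptic point of type $(n;a,b)$ contributes the self-intersection $c(n;a,b)$ of the local Chern cycle $\sum_i (x_i+y_i-1)C_i$ from \cref{sec:elliptic_points}, obtained by squaring the rational pullback of $K$ restricted to the Hirzebruch--Jung chain. Summing the three families of contributions gives \eqref{eqn:Chern1}.

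The step I expect to be the main obstacle is the local bookkeeping at the cyclic quotient singularities: one must verify carefully that the local contribution to $K^2$ equals $c(n;a,b) = \bigl(\sum_i(x_i+y_i-1)C_i\bigr)^2$ with the $P_i = (x_i,y_i)$ determined by $P_0 = (1,0)$, $P_1 = (r/q,1/q)$, $P_{i+1} = b_iP_i - P_{i-1}$, and that the Euler-number contribution really carries the extra term $\tfrac{n-1}{n}$ rather than $1$; both rest on the explicit description of the canonical class of a Hirzebruch--Jung resolution of a cyclic quotient singularity and on matching the orbifold normalization of the volume in \eqref{eqn:volume_formula} with the topological count. Since all of this is carried out in van der Geer \cite[\S IV.2]{vdG}, following Hirzebruch, no geometric input beyond \cref{sec:cusps,sec:elliptic_points} is needed, and it remains only to confirm that his hypotheses hold for our standard congruence subgroups of $\GL_2^+(F)$ — which they do, since the formula involves $\Gamma$ only through the orbifold $\rmP\!\Gamma\backslash\calH$ and its resolution data.
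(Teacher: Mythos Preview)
The paper does not prove this theorem at all: it is stated with attribution to van der Geer \cite[Theorem IV.2.5]{vdG} and used as input for the computation of invariants, with no argument given. Your proposal is a faithful sketch of the proof in \cite[\S IV.2]{vdG}---Hirzebruch proportionality for the free locus, additivity of the Euler number over the stratification for $c_2$, and the local discrepancy/adjunction calculation at cusps and cyclic quotient singularities for $c_1^2$---so it is correct and in fact supplies more than the paper does.
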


\subsection{Hodge diamond and Betti numbers} \label{sec:hodgediamond}

By Noether's formula, we have $\chi = (c_1^2 + c_2)/12$. Since there are no Hilbert modular forms of weight $(0,2)$ or $(2,0)$, we have $q = 0$, and so $p_g = \chi - 1$ and $h^{1,1} = e - 2 \chi$.

The following strong sanity checks are available for our computations. First, the holomorphic Euler characteristic~$\chi$ must be integral for every congruence subgroup. Second, holomorphic Euler characteristic values can be compared with dimensions of spaces of cusp forms, computed independently in \cref{sec:dims}. Indeed, for every~$\frakN$, we must have
\begin{equation}
\sum_{[\frakb]\in \Cl^+(R)} \chi(X_0(\frakN)_{\frakb}) = \dim S_2(\Gamma_0(\frakN)) + h^+(R).
\end{equation}

\subsection{Kodaira type} \label{sec:Kodairatype}

If $E$ is an exceptional curve on a surface $X$, i.e.~$E$ is a smooth rational curve such that $E \cdot E = -1$, then there exists a blowing-down morphism $\pi \colon X \to X'$ such that $\pi(E)$ is a point on $X'$ and $\pi$ is an isomorphism away from $E$. A surface is called \defi{minimal} if it does not contain any exceptional curve. 
We recall the Kodaira classification of minimal surfaces with irreguarity zero, in terms of their holomorphic Euler characteristic~$\chi$ and the self-intersection~$K^2$ of their canonical divisor; see \cite[VII]{vdG} for details.

\begin{table}[H]
    \centering
    \renewcommand{\arraystretch}{1.1}
    \begin{tabular}{c|l|c|c}
Kodaira dimension & \multicolumn{1}{c|}{type} & $\chi$ & $K^2$  \\ \hline \hline
         $\kappa = -1$ & rational & 1 & $8,9$ \\
\hline
         \multirow{2}{*}{$\kappa = 0$} 
                  & Enriques & 1 & 0 \\
&         K3 & 2 & 0 \\
         \hline
         $\kappa = 1$ & honestly elliptic & $\geq 1$ & 0 \\
         \hline
         $\kappa = 2$ & general type & $\geq 1$ & $\geq 1$
    \end{tabular}
    \caption{Enriques--Kodaira classification: minimal surfaces, $q = 0$}
    \label{table:Enriques-Kodaira}
\end{table}

According to \Cref{table:Enriques-Kodaira}, the invariants $\chi$ and $K^2$ are enough to determine the Kodaira dimension, except when:
\begin{enumerate}
    \item $\chi = 1$ and $K^2 = 8, 9$: rational or general type,
    \item $\chi = 1$ and $K^2 = 0$: Enriques or honestly elliptic,
    \item $\chi = 2$ and $K^2 = 0$: K3, Enriques, or honestly elliptic.
\end{enumerate}
Our computational tools give us access to holomorphic Euler characteristic $\chi$ and $K^2$, which in turn gives a list of possibilities for the Kodaira dimension. Note that in order to compute $K^2$ of a minimal model of the surface, we would need to count the number of exceptional curves on $\overline{Y}(\Gamma)$ and further blown-down surfaces. Therefore, we only use $K^2$ of the original surface as a \emph{lower} bound for~$K^2$ of the minimal model. 

One way to get more refined results towards the Kodaira classification is to use special configuration of curves on the Hilbert surface.
Following~\cite{vdG}, we have the following criteria in cases (1) and (2)--(3) respectively.
For (2)--(3), we rely on the notion of an elliptic configuration \cite[Definition~VII.2.8]{vdG}.

\begin{prop}[{\cite[VII.2.2]{vdG}}]\label{prop:rationality_criterion}
Let $X$ be a smooth algebraic surface with $q = 0$ satisfying either:
\begin{enumroman}
    \item $X$ contains curves $C_1$, $C_2$ such that $C_1^2 = C_2^2 = -1$ and $C_1 \cdot C_2 > 0$, or
    \item $X$ contains a curve $C$ such that $C^2 \geq 0$ and $K \cdot C < 0$.
\end{enumroman}
Then $X$ is rational.
\end{prop}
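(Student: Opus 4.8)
The plan is to reduce both statements to Castelnuovo's rationality criterion: a smooth projective surface $X$ over $\C$ with $q(X)=0$ is rational if and only if its second plurigenus $P_2(X)\colonequals\dim_{\C}H^0(X,2K_X)$ vanishes (see, e.g., \cite{vdG}). Since $q(X)=0$ is hypothesized, in each case it suffices to show $|2K_X|=\emptyset$. I would treat both cases uniformly by producing an effective divisor $Z$ on $X$ such that $Z^2\geq 0$, that $K_X\cdot Z<0$, and that $\Gamma\cdot Z\geq 0$ for every irreducible component $\Gamma$ of $Z$: in case (ii) take $Z=C$, and in case (i) take $Z=C_1+C_2$.

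Checking these properties is routine. In case (ii) they hold by hypothesis: $Z=C$ is irreducible with $Z^2=C^2\geq 0$ and $K_X\cdot Z=K_X\cdot C<0$. In case (i), I would first recall that $C_1$ and $C_2$ are exceptional curves, so $p_a(C_i)=0$ and adjunction gives $K_X\cdot C_i=2p_a(C_i)-2-C_i^2=-1$; hence $K_X\cdot Z=-2<0$, while $Z^2=C_1^2+2(C_1\cdot C_2)+C_2^2=2(C_1\cdot C_2-1)\geq 0$ and $C_i\cdot Z=C_i^2+C_1\cdot C_2=C_1\cdot C_2-1\geq 0$ since $C_1\cdot C_2\geq 1$. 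Now suppose for contradiction that some effective divisor $D$ lies in $|2K_X|$. Writing $D=\sum_{\Gamma}a_{\Gamma}\Gamma+D'$, where $\Gamma$ runs over the irreducible components of $Z$, each $a_{\Gamma}\geq 0$, and $D'\geq 0$ shares no component with $Z$, one finds
\begin{equation*}
D\cdot Z=\sum_{\Gamma}a_{\Gamma}\,(\Gamma\cdot Z)+D'\cdot Z\geq 0,
\end{equation*}
since each $\Gamma\cdot Z\geq 0$ and $D'\cdot Z\geq 0$ (as $D'$ is effective with no component in common with $Z$). But $D\cdot Z=2(K_X\cdot Z)<0$, a contradiction. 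Hence $|2K_X|=\emptyset$, so $P_2(X)=0$ and $X$ is rational.

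I expect the only real obstacle to be Castelnuovo's criterion itself, which I would quote rather than reprove; everything else is elementary intersection theory on surfaces. The one place needing attention in case (i) is the use that $C_1$ and $C_2$ are genuine exceptional curves, so that $K_X\cdot C_i=-1$ and hence $K_X\cdot Z<0$; this holds automatically in our applications, where the $C_i$ are among the $\PP^1$'s occurring in the resolutions of cusps and elliptic points of \cref{sec:cusps,sec:elliptic_points}. As an alternative for case (i), one can instead deduce it from case (ii) by contracting $C_1$ through a blow-down $\varphi\colon X\to X_1$: then $q(X_1)=q(X)=0$, and $\overline{C}_2\colonequals\varphi_*C_2$ satisfies $\overline{C}_2^2=C_2^2+(C_1\cdot C_2)^2\geq 0$ and $K_{X_1}\cdot\overline{C}_2=K_X\cdot C_2-C_1\cdot C_2=-1-C_1\cdot C_2<0$, so $X_1$ is rational by (ii), and hence so is its blow-up $X$.
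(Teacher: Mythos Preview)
The paper does not give its own proof of this proposition: it is quoted verbatim from van der Geer \cite[VII.2.2]{vdG} and used as a black box (see \Cref{ex:rational}). So there is nothing in the paper to compare your argument against.

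Your argument itself is correct and is essentially the standard one. The reduction to Castelnuovo's criterion via an effective divisor $Z$ with $K_X\cdot Z<0$ and $\Gamma\cdot Z\geq 0$ for all components $\Gamma$ of $Z$ is exactly how van der Geer proves it, and your verification of these inequalities is clean. (Note, incidentally, that the hypothesis $Z^2\geq 0$ is redundant in your setup: it already follows from $\Gamma\cdot Z\geq 0$ for all components.) You are also right to flag that case (i) needs the $C_i$ to be genuine exceptional curves---i.e., smooth rational---so that adjunction gives $K_X\cdot C_i=-1$; the proposition as stated in the paper says only ``curves $C_1,C_2$ with $C_i^2=-1$'', but in van der Geer and in every application here the $C_i$ are smooth rational, so this is a matter of phrasing rather than a gap. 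Your blow-down alternative for (i) is also fine and again uses that $C_1$ is contractible.
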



\begin{prop}[{\cite[VII.2.9]{vdG}}] \label{prop:criterionK3Elliptic}
Let $X$ be a simply-connected non-rational algebraic surface. If $X$ contains an elliptic configuration $\mathcal C$, then $X$ is a blow-up of a K3 surface or an honestly elliptic surface. Moreover, if $X$ also contains a $(-2)$-curve $D \not\in \mathcal C$ such that $\mathcal C \cup \{D\}$ is connected, then $X$ is a blown up (elliptic) K3 surface.
\end{prop}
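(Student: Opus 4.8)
The plan is to use the configuration $\mathcal{C}$ to pin down the Kodaira dimension of $X$ and the shape of its minimal model, eliminating cases with the hypotheses ``non-rational'' and ``simply connected'', and then, for the refinement, to exploit the geometry of the elliptic fibration that $\mathcal{C}$ produces.

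First I would record what $\mathcal{C}$ gives numerically: by definition an elliptic configuration supports an effective divisor $D\neq 0$ with $D^2=0$ and $p_a(D)=1$, whose dual graph is one of the graphs in Kodaira's list of degenerate elliptic fibres; by adjunction $K_X\cdot D=2p_a(D)-2-D^2=0$, and each irreducible component of $\mathcal{C}$ has zero intersection with $K_X$ (a reduced component is a $(-2)$-curve, unless $\mathcal{C}$ is a single irreducible curve of arithmetic genus one with self-intersection $0$). The first real step is $\kappa(X)\le 1$: this is the classical fact that no surface of general type carries an effective $D$ with $D^2=0$, $p_a(D)=1$ --- on the minimal model $X_{\min}$ the canonical class is nef and big, the pushforward $D_{\min}$ of $D$ is effective, nonzero and has $D_{\min}^2\ge 0$, and a nef and big class pairs to $0$ only with the negative-definite configuration of contracted $(-2)$-curves, so $K_{X_{\min}}\cdot D_{\min}>0$, which pulls back to contradict $K_X\cdot D=0$. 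Next I would exclude $\kappa(X)=-\infty$: a non-rational surface of Kodaira dimension $-\infty$ has minimal model ruled over a base curve of genus $\ge 1$, so $\pi_1(X_{\min})$ surjects onto an infinite group, contradicting $\pi_1(X)=\pi_1(X_{\min})=1$. Hence $\kappa(X)\in\{0,1\}$, and the Enriques--Kodaira classification leaves exactly two options: $X_{\min}$ is a K3 surface (the unique simply connected minimal surface with $\kappa=0$) or a minimal honestly elliptic surface (these being precisely the minimal surfaces with $\kappa=1$). Moreover $X$ is elliptic: Riemann--Roch on $X_{\min}$ gives $h^0(D_{\min})\ge 2$, and since $D_{\min}^2=0$ the system $|D_{\min}|$ (after dividing out a possible multiple-fibre factor) is base-point free and defines a genus-one fibration $f\colon X\to \PP^1$, the base being $\PP^1$ because $q(X)=0$. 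This establishes the first assertion.

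For the ``moreover'' statement I would argue by contradiction: suppose $X_{\min}$ is honestly elliptic, so $\kappa(X)=1$. By the canonical bundle formula $K_{X_{\min}}$ is numerically a positive rational multiple of a fibre, hence meets every multisection of the elliptic fibration positively; pulling back, $K_X\cdot C>0$ for every irreducible curve $C$ on $X$ dominating $\PP^1$. Since every component of $\mathcal{C}$ has $K_X$-degree $0$, none is a multisection, so $\mathcal{C}$ is vertical; being connected, it lies in the support of a single fibre $F_0$, and as the support of a fibre is itself an elliptic configuration and an elliptic configuration admits no proper connected sub-configuration that is again one (a proper connected subgraph of an extended Dynkin diagram has finite, hence negative definite, type), $\mathcal{C}$ is exactly the support of $F_0$. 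The extra $(-2)$-curve $D'$ also has $K_X$-degree $0$, hence is vertical; if it lay in a fibre other than $F_0$ it would be disjoint from $\mathcal{C}$, and if it lay in $F_0$ it would be a component of $\mathcal{C}$ --- both contradicting that $D'\notin\mathcal{C}$ and $\mathcal{C}\cup\{D'\}$ is connected. Therefore $\kappa(X)\neq 1$, so $X_{\min}$ is a K3 surface, elliptic by the previous paragraph.

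The step I expect to be the main obstacle is passing from the numerical data ($D$ effective, $D^2=0$, $p_a(D)=1$) to a genuine genus-one fibration and to the bound $\kappa(X)\le 1$: this is where one needs the Enriques--Kodaira analysis of the systems $|mD|$, including the fixed part, multiple fibres, and the canonical bundle formula. Once the minimal model is identified as a K3 or a properly elliptic surface, the remaining steps --- discarding cases via $\pi_1$ and the refinement using that $K$ is ``vertical'' when $\kappa=1$ --- are comparatively formal.
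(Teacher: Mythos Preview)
The paper does not prove this proposition at all: it is quoted verbatim from van der Geer \cite[VII.2.9]{vdG} and used as a black box, so there is no ``paper's own proof'' to compare against. Your sketch is essentially the standard argument one finds in the surface-classification literature and is correct in outline.

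One point in the ``moreover'' part deserves more care. You argue that $\mathcal{C}$, being vertical and connected, lies in a single fibre $F_0$ of the induced fibration $g\colon X\to\PP^1$, and then conclude $\mathcal{C}=\operatorname{supp}(F_0)$ by saying ``the support of a fibre is itself an elliptic configuration and an elliptic configuration admits no proper connected sub-configuration that is again one''. But $X$ is only a blow-up of a minimal elliptic surface, so the fibre $F_0$ of $g$ may contain $(-1)$-curves and need not be on Kodaira's list; your parenthetical about extended Dynkin diagrams does not directly apply. The clean fix is Zariski's lemma: the intersection form on the components of any fibre is negative semidefinite with radical spanned by the fibre class, so $D^2=0$ forces $D$ to be a rational multiple of $F_0$, hence $\operatorname{supp}(D)=\operatorname{supp}(F_0)$. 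Since the components of $\mathcal{C}$ are $(-2)$-curves, $F_0$ then has no $(-1)$-components and is genuinely a Kodaira fibre, after which your step~5 goes through. With this adjustment the argument is complete.
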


We have two natural sources of curves on Hilbert modular surfaces: the resolution cycles, which never contain exceptional curves, and Hirzebruch--Zagier divisors~\cite[Chapter V]{vdG}, which may sometimes contain exceptional curves.
In our implementation, we currently only use Hirzebruch--Zagier divisors for the full level $\mathfrak N = (1)$. 
The intrinsic \texttt{RationalityCriterion} looks through the irreducible components of Hirzebruch--Zagier divisors which are exceptional, computes their intersection numbers with the cuspidal resolution cycles and checks, for any subset $S$ of disjoint exceptional curves, whether the rationality criterion is satisfied for the surface obtained by blowing down the curves in $S$. This verifies some of the results of~\cite{vdG} computationally.

\subsection{Results}

We consider Hilbert modular surfaces with $\Gamma_0^1(\mathfrak N)_{\mathfrak{b}}$-level structure for simplicity; the other level structures could be analyzed similarly.

\Cref{tab:chi=1} shows surfaces $X_0^1(\mathfrak N)_{\mathfrak b}$ with $\mathfrak N \neq (1)$ within our dataset in \Cref{sec:data} which could be rational according to \Cref{table:Enriques-Kodaira}. We expect this list to exhaust rational surfaces, but not all of the ones in this list to be rational. In \Cref{tab:chi=1} (and \Cref{tab:chi=2} to follow), we write $\mathfrak p_p$ for any ideal of $F$ above a prime $p\in \Z$, and the component ideal $\mathfrak b$ is described by its genus~\cite[pp.\ 3]{vdG}. In future work, we hope to check which surfaces in the table are indeed rational, by studying Hirzebruch--Zagier divisors on Hilbert modular surfaces of higher level.

\begin{table}[H]
    \centering
    \renewcommand{\arraystretch}{1.1}
    \begin{tabular}{c|c|c}
    $d_F$ & Genus of $\frakb$ & $\mathfrak N$ \\ \hline\hline
    5 & $+$ & $\p_2$, $\p_5$, $\p_3$, $\p_{11}$, $\p_2^2$, $\p_{19}$, $\p_2\p_5$, $\p_5^2$, $\p_{29}$, $\p_2\p_{11}$, $\p_{59}$ \\
    8 & $+$ & $\mathfrak p_2$, $\p_2^2$, $\p_7$, $\p_2^3$, $\p_{2}\p_7$, $\p_2^4$, $\p_{23}$ \\ 
    12 & $++$ & $\p_2$, $\p_3$, $\p_2^2$, $\p_2\p_3$, $\p_3^2$, $\p_{11}$ \\ 
    12 & $--$ & $\p_2$, $\p_3$, $\p_2\p_3$, $\mathfrak p_{11}$ \\ 
    13 & $+$ &  $\p_3$, $\p_3^2$ \\
    17 & $+$ & $\p_2$, $\p_2^2$ \\
    24 & $++$ & $\p_3$ \\ 
    28 & $++$ & $\p_2$ \\
    \end{tabular}
    \caption{Surfaces $X_0^1(\mathfrak N)_{\mathfrak{b}}$ for $\mathfrak N \neq (1)$ satisfying $\chi = 1$ and $K^2 \leq 8$}
    \label{tab:chi=1}
\end{table}

Similarly, \Cref{tab:chi=2} shows surfaces $X_0^1(\mathfrak N)_{\mathfrak b}$ with $\mathfrak N \neq (1)$ within our dataset in \Cref{sec:data} which could be K3, honestly elliptic, or general type 
We also expect this list to be exhaustive. Similarly to our implementation of the rationality criterion, one could implement the criterion for $X$ to be a blown-up K3 surface. This would require describing all the Hirzebruch--Zagier divisors on higher level Hilbert modular surfaces, including the genera of their irreducible components, their self-intersection numbers, and their intersection numbers with the resolution cycles. 

\begin{table}[h!]
    \centering
    \renewcommand{\arraystretch}{1.1}
    \begin{tabular}{c|c|c}
    $d_F$ & Genus of $\frakb$ & $\mathfrak N$ \\ \hline\hline
    5 & $+$ & $\mathfrak p_{31}$  \\
    8 & $+$ & $\p_3$, $\mathfrak p_{17}$, $\p_2^5$ \\
    12 & $++$ & $\p_2^3$, $\p_2^2 \p_3$, $\p_2^4$ \\ 
    12 & $--$ & $\p_2^2$, $\p_2^3$, $\p_2^2\p_3$ \\ 
    13 & $+$ & $\p_2$, $\p_3\overline{\p_3} = (3)$ \\
    17 & $+$ & $\mathfrak p_2 \overline{\mathfrak p_2} = (2)$, $\mathfrak p_2^3$ \\
    21 & $++$ & $\p_3$, $\mathfrak p_2$, $\mathfrak p_5$, $\p_7$, $\p_3^2$ \\
    21 & $--$ &  $\mathfrak p_3$, $\p_5$ \\
    24 & $++$ & $\p_2$, $\p_2^2$ \\
    24 & $--$ & $\p_2$ \\
    28 & $++$ & $\p_3$, $\p_2^2$, $\p_3^2$  \\
    28 & $--$ & $\p_3$ \\
    33 & $++$ &  $\mathfrak p_2$, $\p_3$, $\mathfrak p_2^2$ \\
    33 & $--$  & $\mathfrak p_2$ \\
    \end{tabular}
    \caption{Surfaces $X_0^1(\mathfrak N)_{\mathfrak{b}}$ for $\mathfrak N \neq (1)$ satisfying $\chi = 2$ and $K^2 \leq 0$}
    \label{tab:chi=2}
\end{table}

Hamahata \cite{Hamahata} has already considered the case where either $d_F=8$ or $d_F=p \equiv 1 \bmod 4$ is prime and $\Cl^+(\Z_F) = 1$; he classified some of the above surfaces using similar methods by blowing down Hirzebruch--Zagier divisors, as follows.

\begin{thm}[{ \cite[Theorems 3.5, 4.6, 5.15, 6.13]{Hamahata}}]
If $\Cl^+(\Z_F) = 1$, and $d_F$ is either a prime $p \equiv 1 \bmod 4$ or $d_F = 8$, then $X_0^1(\frakN)$ is of general type, with the following exceptions:
    \begin{itemize}
    \item For $d_F = 5$, the surfaces of levels $\frakN=\frakp_2, \frakp_5, \frakp_{11}, \frakp_2^2, \frakp_5^2 $ are rational. 
    Moreover, the surfaces of levels $\frakp_3, \frakp_{19}, \frakp_2 \frakp_5, \frakp_2 \frakp_{11}$ are not rational; and the surface of level $\frakp_{31}$ is not a blown-up K3 surface.
    \item For $d_F = 8$, the surfaces for levels $\frakp_2, \frakp_2^2, \frakp_2^3, \frakp_2^4$ are rational and the surface of level $\frakp_3$ is a blown-up K3 surface. 
    The surface of level $\frakp_2^5$ is a blown-up honestly elliptic surface
    and the surface of level $\frakp_2 \frakp_7$ is not rational.
    \item For $d_F = 13$, both surfaces of levels $\frakp_3$ and $\frakp_3^2$ are rational and the surface of level $\frakp_2 = (2)$ is a blown-up K3 surface.  The surface of level $(3)$ is an honestly elliptic surface.
    \item For $d_F = 17$, both surfaces of levels $\frakp_2$ and $\frakp_2^2$ are rational, and both surfaces of levels $(2)$ and $\frakp_2^3$ are blown-up K3 surfaces.
\end{itemize}
\end{thm}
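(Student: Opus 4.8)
The plan is to run the invariant computations of \cref{sec:cusps,sec:elliptic_points,sec:geom_invs} on the surfaces $X_0^1(\frakN)$ --- which have a single connected component $X_0^1(\frakN)_{(1)}$, since $\Cl^+(\Z_F)=1$ --- and then feed the output into the Enriques--Kodaira classification of \Cref{table:Enriques-Kodaira}. Concretely, first compute the Chern numbers $c_1^2$ and $c_2$ of $X_0^1(\frakN)$ from \eqref{eqn:Chern1} and \eqref{eqn:Chern2}; this requires the volume $\vol(\rmP\!\Gamma_0^1(\frakN)_{(1)}\backslash\calH)=2[\Gamma(1):\rmP\!\Gamma_0^1(\frakN)_{(1)}]\,\zeta_F(-1)$ from \eqref{eqn:volume_formula}, the self-intersection numbers of the cusp resolution cycles produced by the Hirzebruch--Jung algorithm of \cref{sec:hjcont}, and the counts and rotation types of the elliptic points from \cref{sec:elliptic_points} (the possible torsion orders are severely restricted by \Cref{lem:possibleEllipticOrders} since $F$ is quadratic, and $\Cl^+(\Z_F)=1$ trivializes most of the embedding-number bookkeeping). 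Noether's formula then gives $\chi=(c_1^2+c_2)/12$, and $q=0$ as recalled in \cref{sec:hodgediamond}.

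Next I would convert positivity of $c_1^2$ into a ``general type'' conclusion. Since $X_0^1(\frakN)$ is the minimal desingularization of the Baily--Borel compactification, its minimal model is reached by successively contracting $(-1)$-curves, each contraction raising the self-intersection of the canonical divisor by one; hence $K_{X_{\min}}^2\geq c_1^2$ and $q(X_{\min})=0$. Among minimal surfaces with $q=0$, \Cref{table:Enriques-Kodaira} shows that the only ones with $K^2>0$ that are not of general type are rational, and those have $\chi=1$. So whenever $\chi\geq 2$ and $c_1^2\geq 1$ the surface is automatically of general type, and when $\chi=1$ with $c_1^2\geq 1$ it remains only to exclude rationality.

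For uniformity, observe that in \eqref{eqn:Chern1} and \eqref{eqn:Chern2} the dominant term is $\vol(\Gamma\backslash\calH)$, which grows like $[\Gamma(1):\Gamma]\,\zeta_F(-1)$ --- hence with both $\Nm\frakN$ and $d_F$ --- whereas the cusp- and elliptic-point corrections are of strictly smaller order of magnitude. A direct estimate then yields $c_1^2>0$ and $\chi\geq 2$ for all but finitely many pairs $(F,\frakN)$ satisfying the hypotheses, reducing the theorem to a finite list: precisely the levels recorded in \Cref{tab:chi=1} (where $\chi=1$) and \Cref{tab:chi=2} (where $c_1^2\leq 0$), for the fields $F$ appearing there, together with a finite check that no other $(F,\frakN)$ is exceptional.

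The remaining and genuinely delicate step is to settle this finite list using auxiliary curves. For each such $(F,\frakN)$ one writes down the relevant Hirzebruch--Zagier divisors on $X_0^1(\frakN)$, determines the genera and self-intersection numbers of their irreducible components and their intersections with the cusp and elliptic resolution cycles, and contracts those components that are exceptional. On the resulting surface one applies the rationality criterion \Cref{prop:rationality_criterion} --- a pair of $(-1)$-curves meeting positively, or a curve $C$ with $C^2\geq 0$ and $K\cdot C<0$ --- to obtain the rational cases, and the elliptic-configuration criterion \Cref{prop:criterionK3Elliptic} to obtain the ``blown-up K3'' and ``blown-up honestly elliptic'' cases; the levels for which neither criterion applies and for which one can further verify $K_{X_{\min}}^2\geq 1$ are of general type. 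This Hirzebruch--Zagier analysis, carried out case by case, is the main obstacle: it is exactly the part not automated by the algorithms of this paper, and it is where the explicit computations of Hamahata \cite{Hamahata} are needed.
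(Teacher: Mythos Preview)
The paper does not prove this theorem: it is stated purely as a quotation of Hamahata's results, with the citation \cite[Theorems 3.5, 4.6, 5.15, 6.13]{Hamahata} in the header and no argument beyond the surrounding sentence that Hamahata ``classified some of the above surfaces using similar methods by blowing down Hirzebruch--Zagier divisors.'' So there is no proof in the paper to compare against. Your outline is a fair sketch of the strategy Hamahata in fact follows, and you correctly flag that the decisive ingredient---the explicit Hirzebruch--Zagier analysis at each exceptional level---is his and is not supplied by the algorithms of this paper (which, as noted in \cref{sec:Kodairatype}, handle Hirzebruch--Zagier divisors only for $\frakN=(1)$).

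One genuine gap in your sketch: the finiteness reduction you invoke (``a direct estimate then yields $c_1^2>0$ and $\chi\geq 2$ for all but finitely many pairs $(F,\frakN)$'') is not the routine step you make it sound. The paper explicitly records this as open in the present generality (final paragraph of \cref{sec:data}): van der Geer's estimates \cite[Theorem VII.5.1]{vdG} treat only level $(1)$, and the analogous bound for $\Gamma_0^1(\frakN)$ with varying $\frakN$ is not established here. Hamahata proves the needed estimates in his restricted setting, so this too is a point where one must appeal to \cite{Hamahata}, not merely to the machinery of \cref{sec:geom_invs}. Relatedly, several levels in \Cref{tab:chi=1} (for instance $\frakp_{29}$ and $\frakp_{59}$ over $\Q(\sqrt 5)$) lie outside your ``$\chi\geq 2$ and $c_1^2\geq 1$'' region yet are asserted by the theorem to be of general type; disposing of these again requires the curve-by-curve blow-down analysis in \cite{Hamahata}, not just the numerical invariants.
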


\section{Data computed} \label{sec:data}

The data resulting from our computations is publicly available online \cite{ourdata}.
We hope to include it in the LMFDB in the near future.

\subsection{Scope of data}
We computed geometric invariants $\chi$ and $K^2$ (hence the Hodge diamond) for all Hilbert surfaces attached to congruence subgroups $\Gamma_0(\frakN)_\frakb$ and~$\Gamma_0^1(\frakN)_\frakb$ for real quadratic fields $F$ with discriminant $d_F \leq 3000$ and levels~$\frakN$ with $\Nm(\frakN) \leq 5000/{d_F}^{3/2}$,
where $\frakb$ ranges over all possible narrow classes in~$\Cl^+(R)$. (This cutoff is meant as a rough approximation for the volume of the surface.) We find 4517 possibilities for $(F,\frakN,\frakb)$ in this range.
\Cref{tab:timings} displays timings for computing the geometric invariants and cusp resolutions for $\Gamma_0$ and $\Gamma_0^1$.

\begin{table}[H]
    \centering
    \renewcommand{\arraystretch}{1.1}
    \begin{tabular}{c|c|c}
        {} & $\Gamma_0$ & $\Gamma_0^1$ \\ \hline\hline
        invariants & 104.46 & 92.36 \\ \hline
        cusps & 53.99 & 52.53 \\
    \end{tabular}
    \caption{Timings for the various computations, given in CPU minutes}
    \label{tab:timings}
\end{table}

We also computed the Hilbert series for the dimensions of cuspidal spaces $S_k(\Gamma_0(\frakN))$ 
in the above range, but omitted some examples that took more than 3 hours.  We hope to return to these cases in the future. 

\subsection{Features of the data} \Cref{tab:features} presents the distribution of the different surface types in our dataset. Of the 4517 surfaces $X_0(\frakN)_\frakb$ (resp., $X^1_0(\frakN)_\frakb$), in all but 184 (resp.,~175) cases the invariants we computed allowed to completely determine the surface's Kodaira dimension. 

\begin{table}[H]
    \centering
    \begingroup
    \renewcommand{\arraystretch}{1.1}
    \begin{tabular}{c| c| c|c}
        Type & Kodaira dimension & $\Gamma_0$ & $\Gamma_0^1$ \\ \hline\hline
        rational & $\kappa=-1$ & 18 & 15 \\ \hline
        honestly elliptic & $\kappa=1$ & 7 & 16 \\ \hline
        general type & $\kappa=2$ & 4308 & 4311 \\ \hline
        \multirow{5}{*}{unknown} 
         & $\kappa\in \{-1,2\}$ & 61 & 44 \\
         & $\kappa\in \{0,1\}$ & 5 & 12 \\
         & $\kappa \in \{0,1,2\}$ & 51 & 43 \\
         & $\kappa\in \{1,2\}$ & 67 & 76 \\
         & \textnormal{total} & 184 & 175 \\
    \end{tabular}
    \endgroup
    \caption{Counts for the number of Hilbert modular surfaces in our dataset of each Kodaira dimension}
    \label{tab:features}
\end{table}

\subsection{Examples}
Below we present some interesting examples of surfaces that we observed in our dataset. 
\begin{example} \label{ex:disc85}
    Let $F = \Q(\sqrt{85})$ and $\frakN = (1)$. Then $h = h^+ = 2$; a representative for the nontrivial narrow class $\frakb = (3, 1 + \sqrt{85})$.
    
    For $X_0(\frakN)_{(1)}$, we find $\chi = 4$. We initially compute $K^2 = -8$ using \eqref{eqn:Chern1}; however our model for the surface is not minimal. 
    We find that the Hirzebruch-Zagier divisors $F_1$ and $F_4$ each consists of two exceptional curves, and we can blow down successively $6+2=8$ exceptional curves to find 
    that $K^2 = 0$. From the results in \Cref{table:Enriques-Kodaira}, we conclude that $\kappa=1$ and hence the surface is honestly elliptic.

    For the other component $X_0(\frakN)_{\frakb}$, we compute that $\chi = 4$ and initially find that $K^2 = 0$. Again our model is not minimal and this time we find 2 exceptional curves as the components of the Hirzebruch-Zagier divisor $F_3$, which lead to a corrected computation of $K^2 = 2$. This implies that $\kappa=2$ and the surface is of general type. Thus we have found a Hilbert modular surface whose components belong to different Kodaira classes, and hence are not isomorphic or even birational.
\end{example}

\begin{example}
    Let $F = \Q(\sqrt{11})$ and $\frakN = (1)$. 
    Then $h = h^+ = 2$; a representative for the nontrivial narrow class is $\frakb = (3 - \sqrt{11})$.
    Both $X^1_0(\frakN)_{(1)}$ and $X^1_0(\frakN)_{\frakb}$ are simply connected by \cite[Theroem IV.6.1]{vdG}, so we may use \Cref{prop:criterionK3Elliptic}.
    
    For~$X^1_0(\frakN)_{(1)}$, we have $\chi = 2$. We find that $F_1$ has two components, giving rise to successive blow-downs of 6 exceptional curves, as in ~\Cref{ex:disc85}, and $F_4$ is an exceptional curve intersecting a $(-2)$-curve, leading to $K^2 = -8 + 8 = 0$ via $2$ more blow-downs. We cannot determine the Kodaira dimension exactly, as \Cref{table:Enriques-Kodaira} only tells us that $\kappa \in \{0,1\}$.
    For $X^1_0(\frakN)_{\frakb}$, we have $\chi=3$ and $K^2 = -2 + 2 = 0$. Thus $\kappa = 1$ and the surface is honestly elliptic. 
\end{example}

\begin{example}
    Let $F = \Q(\sqrt{165})$ and $\frakN = (1)$. In this case, $\Cl(R) \iso C_2$ and $\Cl^+(R) \iso C_2 \times C_2$, where $C_2$ is the cyclic group of order 2.
    Computing for~$X_0^1(\frakN)_{(1)}$, we find that $\chi = 4$. The existence of 20 exceptional curves coming from Hirzebruch-Zagier divisors gives $K^2 = -20 + 20 = 0$. Thus $\kappa=1$ and the surface is honestly elliptic. By contrast, for $X_0(\frakN)_{(1)}$ we have $\chi = 3$ and $K^2 = -10 + 20 = 10$, and hence $\kappa=2$ and the surface is of general type. (We note in passing that, for both $X_0$ and $X_0^1$, the remaining 3 components are all of general type.)
\end{example}

\begin{example} \label{ex:rational}
    When $F = \Q(\sqrt{13})$ and $\mathfrak N = (1)$, we verify that the Hilbert modular surface is rational. We consider the Hirzebruch--Zagier divisor $F_3$, which in this case consists of a single curve, and compute that we have the following configuration of curves in both cases, where the thick lines are curves in the resolutions of the cusps.

    \begin{center}
        \begin{tikzpicture}[scale=0.6]
        \draw[very thick] (0,0) -- (3,0);
        \draw[very thick] (0,2) -- (3,2);
        \draw (0.5,-0.5) -- (0.5,2.5) node[above] {$F_3$};
        \draw (0.5, 1) node[left] {$-1$};
        \draw (1.5, 2) node[above] {$-2$};
        \draw (1.5, 0) node[below] {$-2$};

        \draw[->] (3.5, 1) -- (4.5, 1);

        \draw[very thick] (5, 0.5) -- (7, 2.5);
        \draw[very thick] (5, 1.5) -- (7, -0.5);
        \draw (6, 1.75) node[above] {$-1$};
        \draw (6, 0.25) node[below] {$-1$};
        \end{tikzpicture}
    \end{center}
    We blow down the exceptional curve $F_3$ and apply \Cref{prop:rationality_criterion} to conclude that the surface is rational. This process is excecuted in \texttt{RationalityCriterion}.
\end{example}

\subsection{Future directions}

An immediate goal is to gather data for other standard congruence subgroups, namely $\Gamma_1(\frakN)_\frakb$, $\Gamma_1^1(\frakN)_\frakb$ and $\Gamma(\frakN)_{\frakb}$.
The case of~$\Gamma_1(\frakN)_{\frakb}$ for non-squarefree levels seems more subtle, as the stabilizer of a cusp does not seem to be of the exact form~$G(M,V)$, and only contains such a group with finite index.
It remains to determine how this finite quotient acts on the resolution cycles.

We also hope to complete the classification into Kodaira types of the surfaces we have computed and resolve any ambiguities by studying Hirzebruch-Zagier divisors on higher level modular surfaces. Van der Geer proves \cite[Theorem VII.3.3]{vdG} that the Hilbert modular surface 
$X_0^1(1)_{\frakb}$
over a real quadratic field $F$ is of general type in all but finitely many cases. A key result in proving this theorem are estimates \cite[Theorem VII.5.1]{vdG} which show that if $d_F > 500$, then $X_0^1(1)_{\frakb}$ is of general type. It seems that an analogous result should be true in our more general setting of higher level and further variants ($\Gamma_0, \Gamma_0^1, \Gamma_1, \Gamma_1^1$, etc.)~of Hilbert modular groups. Proving such a result would guarantee that \Cref{tab:chi=1} and \Cref{tab:chi=2} are exhaustive.


\bibliographystyle{amsalpha}

\begin{thebibliography}{999}

\bibitem{ourcode}
Eran Assaf, Angelica Babei, Ben Breen, Sara Chari, Edgar Costa, Juanita Duque-Rosero, Aleksander Horawa, Jean Kieffer, Avinash Kulkarni, Grant Molnar, Abhijit Mudigonda, Michael Musty, Sam Schiavone, Shikhin Sethi, Samuel Tripp, and 
    John Voight, Hilbert modular forms, 2023, \url{https://github.com/edgarcosta/hilbertmodularforms/}.

\bibitem{ourdata}
Eran Assaf, Angelica Babei, Ben Breen, Edgar Costa,
Juanita Duque-Rosero, Aleksander Horawa, Jean Kieffer,
Avinash Kulkarni, Grant Molnar, Sam Schiavone, and John Voight, Hilbert modular surfaces data, 2023, \url{https://github.com/edgarcosta/hilbertmodularsurfacesdata/}.

\bibitem{Magma}
Wieb Bosma, John Cannon, and Catherine Playoust, \emph{The Magma algebra system. I. The user language}, J.\ Symbolic Comput. \textbf{24} (1997), no.\ 3--4, 235--265.

\bibitem{BreenVoight}
Ben Breen and John Voight, \emph{Computing Hilbert modular forms via the trace formula}, 2023, preprint.

\bibitem{DasguptaKakde}
Samit Dasgupta and Mahesh Kakde, \emph{On constant terms of Eisenstein series}, Acta Arith.\ \textbf{200} (2021), no.~2, 119--147.

\bibitem{DembVoight}
Lassina Demb\'el\'e and John Voight, \emph{Explicit methods for Hilbert modular forms}, Elliptic curves, Hilbert modular forms and Galois deformations, Birkhauser, Basel, 2013, 135--198.

\bibitem{Freitag}
Eberhard Freitag, \emph{Hilbert modular forms}, Springer-Verlag, Berlin, 1990.

\bibitem{Goren}
Eyal Z.\ Goren, \emph{Lectures on Hilbert modular varieties and modular forms}, CRM Monograph Series, vol.~14, Amer.\ Math.\ Soc., Providence, RI, 2002.

\bibitem{Grundman}
Helen Grundman, \emph{Hilbert modular variety computations}, WIN -- women in numbers, Fields Inst.\ Commun., vol.~60, Amer. Math. Soc., Providence, RI, 2011, 3--14.

\bibitem{Hamahata}
Yoshinori Hamahata, \emph{Hilbert Modular Surfaces with $p_g \le 1$}, Mathematische Nachrichten (173) \textbf{1} (1995), 193--236.

\bibitem{Hirzebruch}
Friedrich E.\ P.\ Hirzebruch, \emph{Hilbert modular surfaces}, Enseign.\ Math.\ (2) \textbf{19} (1973), 183--281.

\bibitem{Hir77}
F.~Hirzebruch, \textit{The ring of Hilbert modular forms for real quadratic number fields of small discriminant}, Lecture Notes in Math., vol.~627, Springer-Verlag, Berlin, 1977.

\bibitem{vdg-Hirzebruch}
Friedrich Hirzebruch and Gerard van der Geer, \emph{Lectures on Hilbert modular surfaces}, S\'{e}minaire de Math\'{e}matiques Sup\'{e}rieures, vol.~77, Presses de l'Universit\'{e} de Montr\'{e}al, Montr\'eal, 1981.

\bibitem{Johansson}
Stefan Johansson, \emph{Genera of arithmetic Fuchsian groups}, Acta Arith.\ \textbf{86} (1998), no.~2, 171--191. 

\bibitem{LMFDB}
The LMFDB Collaboration, \emph{The {L}-functions and Modular Forms Database},  \\ \texttt{http://www.lmfdb.org}, 2023.

\bibitem{Okada}
Kaoru Okada, \emph{Hecke eigenvalues for real quadratic fields}, Experiment. Math.\
\textbf{11} (2002), 407--426.

\bibitem{Prestel}
Alexander Prestel, \emph{Die elliptischen Fixpunkte der Hilbertschen Modulgruppen}, Math.\ Ann.\ \textbf{177} (1968), 181--209. 

\bibitem{Saito}
Hiroshi Saito, \emph{On an operator $U_{\chi }$ acting on the space of Hilbert cusp forms}, J. Math. Kyoto Univ.\
\textbf{24} (1984), 285--303.

\bibitem{Shimizu2}
Hideo Shimizu, \emph{On zeta functions of quaternion algebras} Ann.\ of Math.~(2) \textbf{81} (1965), 166--193.

\bibitem{Stanley}
Richard P.\ Stanley, \emph{Enumerative combinatorics: Volume 1}, 2nd.\ ed., Cambridge Studies in Advanced Mathematics, vol.~49, Cambridge University Press, Cambridge, 2012.

\bibitem{Takase}
Koichi Takase, \emph{On the trace formula of the Hecke operators and the special values of the second L-functions attached to the Hilbert modular forms}, Manuscripta Math.\ \textbf{55} (1986), no.~2, 137--170.

\bibitem{vdG}
Gerard van der Geer, \textit{Hilbert modular surfaces}, Ergeb.\ Math.\ Grenzgeb.\ (3), vol.~16, Springer-Verlag, Berlin, 1988. 

\bibitem{Voight}
John Voight, \emph{Quaternion algebras}, Grad.\ Texts in Math., vol.~288, Springer, Cham, 2021.

\end{thebibliography}

\end{document}